\begin{document}
%%%%%%%%%%%%%%%%

\TITLE{Heavy Traffic Queue Length Behaviour in a Switch under Markovian Arrivals}

\ARTICLEAUTHORS{%
\AUTHOR{Shancong Mou}
\EMAIL{smou7@gatech.edu}
\AUTHOR{Siva Theja Maguluri}
\EMAIL{siva.theja@gatech.edu}
\AFF{Georgia Institute of Technology, Atlanta, Georgia 30332}} 

\ABSTRACT{%
This paper studies the input queued switch operating under the MaxWeight algorithm when the arrivals are according to a Markovian process. We exactly  characterize the heavy-traffic scaled mean sum queue length in the heavy-traffic limit, and shows that it is within a factor of less than $2$ from a universal lower bound. Moreover, we obtain lower and upper bounds, that are applicable in all traffic regimes, and they become tight in the heavy-traffic regime. 

The paper obtains these results by generalizing the drift method recently developed for the case of i.i.d. arrivals, to the case of Markovian arrivals. The paper illustrates this generalization by first obtaining the heavy-traffic mean queue length and its distribution in a single server queue under Markovian arrivals and then applying it to the case of input queued switch.  
The key idea is to exploit the geometric mixing of finite-state Markov chains, and to work with a time horizon that is picked so that the error due to  mixing depends on the heavy-traffic parameter.
}%

\KEYWORDS{Input Queued Switch, heavy-traffic limit, Drift method, single server queue, Markov chain mixing}

\maketitle

\section{Introduction}
Big data and machine learning revolution are powered by large scale data centers. 
With the growing size of data centers, design and operation of efficient networks that facilitate exchange of data has become important \cite{singh2015jupiter}. 
A goal in the design of a data center network is to create a network that has full bisection bandwidth \cite{perry2014fastpass,alizadeh2013pfabric} or a network that is logically equivalent to an input queued switch. 
An operational challenge in such a data center is to schedule packets in order to maximize throughput and minimize delay. 

In this paper, we study the problem of scheduling packets in an input queued switch. In addition to serving as a model for data center networks, input queued switches are important because they form building blocks of any data network.
An input queued switch can be modeled as a matrix of queues operating in discrete time. Packets arrive into each of the queues according to a stochastic process. All packets need exactly one time slot of service. The key constraint is that at each time, exactly one packet can be served from each row or column. Thus the set of allowed schedules at each time forms a permutation matrix. 

A popular algorithm for scheduling in input queued switch, is the 
 MaxWeight algorithm \cite{tassiulas1990stability}, which was first proposed in the context of wireless networks. 
 In this algorithm, at each time, the permutation matrix with the maximum weight is chosen, using queue lengths as weights. While it is known to maximize throughput in an input queued switch \cite{mckeown1999achieving}, understanding the delay or queue-length performance is much more challenging, and so one uses asymptotic analysis. 
 In this paper, we consider the heavy-traffic regime, where the total arrival rate in each row and column approaches the maximum possible value of one. The mean queue length in heavy-traffic under such the MaxWeight algorithm was characterized in \cite{maguluri2018optimal,maguluri2016heavy},
 and it was shown that MaxWeight has the optimal scaling.  

The key limitation of the work in \cite{maguluri2018optimal,maguluri2016heavy} is that it assumes that the arrivals are according to an i.i.d. process. However, it is known that real data centers experience short bursts of high traffic  \cite{datacenter_traffic}. 
The focus of this paper, therefore, is to consider  arrivals that are modulated by a Markov chain, which can model a  rich class of arrival patterns. For instance, in a continuous-time setting, it is known that
a Markovian arrival process approximates any marked point process to an arbitrary degree of accuracy \cite{asmussen1993marked}. 

\subsection{Main contributions}
The main contribution of this paper is the exact characterization of the mean sum of queue lengths in heavy-traffic in an input queued switch operating under MaxWeight algorithm. Due to Little'{{}s} law, such a result immediately implies a result on the mean delay. The key difference relative to the result in \cite{maguluri2016heavy} under i.i.d. traffic is that the mean queue lengths depends not only on the instantaneous variance in steady-state, but the entire auto covariance function of the arrival process. In addition to the heavy-traffic results, we obtain upper and lower bounds on the mean queue lengths under any traffic, and show that these match in the heavy-traffic regime. We also obtain a universal lower bound under any algorithm, and show that it is within a factor of at most two in the heavy-traffic limit. 

An input queued switch does not satisfy the so called complete resource pooling (CRP) condition, and to the best of our knowledge, this is the first result on heavy-traffic characterization of a nonCRP system. Methods based on fluid and diffusion limits \cite{gamarnik2006validity,harrison1988brownian,har_state_space,harlop_state_space,stolyar2004maxweight,Williams_state_space} 
allow for natural generalization beyond i.i.d. arrivals, but except in special cases, there are no known results on nonCRP systems using these methods. Drift method was introduced in \cite{maguluri2016heavy,maguluri2018optimal} to analyze nonCRP systems such as the switch, but was crucially limited to i.i.d. arrivals. A key methodological contribution of this paper is to extend drift method to Markovian arrivals, and thus obtain the first result on a nonCRP system under the non i.i.d. arrival process. 

The main ingredient in the drift method is to consider the one-step drift, i.e., the expected change in the value, of a test function. Under i.i.d. arrivals, the future arrivals are independent of the current queue length, and this property plays a crucial role in bounding the one-step drift. 
The key challenge in the study of Markovian arrivals is that such independence property does not hold because both the current queue length and future arrivals are correlated to the past arrivals. 
In order to overcome this challenge, we  recursively open  the current queue length and express it in terms of the queue length $m$ slots ago, as well as the arrivals and service over the last $m$ time slots. 
If $m$ is large enough, the old queue length is approximately independent of the current arrivals, due to fast (geometric) mixing of the  Markov chain that modulates the arrivals. However, this recursive opening also introduces certain error terms, which are small only if $m$ is small. We carefully choose $m$ as a function of the heavy-traffic parameter $\epsilon$, to optimally trade off these two competing phenomena, i.e., our choice of $m$  ensures approximate independence between the old queue length and the current arrival while also ensuring that the resultant error terms go to zero in the heavy-traffic regime.

In order to  illustrate our generalization of the drift method, in Section \ref{sec: singleserver}, we present a study of a single server queue operating in discrete time under Markovian arrivals. We  first present the Markovian generalization of Kingman's bound \cite{kingman1961single} on the mean queue length. In the case of single server queue, in addition to the mean queue length, it is possible to obtain the complete distribution of the queue length in heavy-traffic. We show that similar to the i.i.d. case, it is an exponential distribution, albeit with a modified mean that depends on the auto covariance function. We show this result using the transform method
\cite{hurtado2020transform}, a generalization of the drift method based on exponential test functions. 

\subsection{Related literature}

Heavy-traffic analysis of queueing systems is studied in the literature using fluid and diffusion limits 
\cite{gamarnik2006validity,harrison1988brownian,har_state_space,harlop_state_space,stolyar2004maxweight,Williams_state_space}. 
Systems with Markovian arrivals can naturally be studied using such an approach, and the results on single server queue that we present in Section \ref{sec: singleserver} are known. 
However, most of these results are applicable only when the systems satisfy a condition called the Complete Resource Pooling (CRP). Under the CRP condition, the system behaves as a single server queue. This is usually proved formally by a state space collapse result, which shows that in the heavy-traffic limit, the {{} multi-dimensional queue size vector stays close to a one-dimensional subspace}.  
Except in some special cases, there is no literature based on the diffusion limit approach to study systems that do not satisfy the CRP condition. The switch system considered in this paper does not satisfy the CRP condition, and exhibits a multi-dimensional state space collapse, i.e., the queue size vector stays close to a multi-dimensional subspace.

An alternate method for heavy-traffic analysis based on drift arguments was introduced in \cite{eryilmaz2012asymptotically} to study CRP systems. 
This drift method was generalized to study the switch system when the CRP condition is not met, in \cite{maguluri2018optimal,maguluri2016heavy}. The literature on drift method so far is limited to an i.i.d. arrival process. 

The switch under non i.i.d. traffic was studied in \cite{neely2008delay} and \cite{sharifnassab2020fluctuation}. A loose upper bound on queue lengths was obtained in  \cite{neely2008delay} and a state space collapse result was established in \cite{sharifnassab2020fluctuation}. To the best of our knowledge, this is the first work that exactly characterizes the heavy-traffic queue lengths for a system  without CRP, under a non i.i.d. arrival process.  

Input queued switch is one of the simplest systems that does not satisfy CRP, and so has served as a guidepost to study general queueing systems \cite{shah2011optimal}. The drift method that was developed in \cite{maguluri2016heavy} was used to study a variety of stochastic processing networks in a flurry of follow-up works, all of them under i.i.d. traffic. 
Input queued switch with prioritized customers was studied in \cite{IBM_switch,IBM_switch_arxiv}, switch under novel low complexity scheduling algorithms was studied in \cite{jhunjhunwala2020low}, and an optical switch that incurs queueing delay was studied in \cite{javidi_optical_ht}. The so called generalized switch model was studied in \cite{langegenswitch}, and it subsumes a rich class of stochastic networks including  wireless networks, cloud computing, data center networks, production systems, mobile base stations, etc. These methods have also been used to study load balancing in  \cite{hurtado2020logarithmic,zhou2018flexible} and the bandwidth sharing network in  \cite{wang2018heavy}.

\subsection{Notation}

Throughout the paper, we denote a random variable by an upper case letter, for example, $X$; the realization of a random variable $X$ by a lowercase letter $x$. If they are vectors, we use their corresponding boldface letter, for example, $\bm{X}$ and $\bm{x}$. 
We define $X \sim \pi$ as $X$ follows the distribution $\pi$ and $X \stackrel{d}{=} Y$ as $X$ follows the same distribution as $Y$. {{} We use $\mathcal{I}(\cdot)$ to denote the indicator function.} 
We denote $E_{X \sim \pi}[\cdot]$ and $Var_{X \sim \pi}[\cdot]$ as the expectation and variance calculated with respect to the distribution $X \sim \pi$. $\mathbb{N}_+$ denotes the set of positive integers, $\mathbb{N}$  denotes the set of non-negative integers. $\mathbb{R}_+$ denotes the set of positive real number.

The rest of the paper is organized as follows. In Section \ref{sec:preliminaries}, we present a few preliminary results that are essential for the proofs. 
In Section  \ref{sec: singleserver}, we introduce the single server queue model, and present its heavy-traffic results in order to illustrate the drift method for Markovian arrivals. 
In Section \ref{sec:switch}, we introduce the switch model, and present the main result of the paper on heavy-traffic behavior of switch under Markovian arrivals. These results are obtained by using the ideas developed in Section  \ref{sec: singleserver} in conjunction with the broad outline from \cite{maguluri2018optimal}. In particular, we  first present a state space collapse result, which is then used to characterize the heavy-traffic mean sum queue length. Finally, Section \ref{sec:conclusion} concludes the paper.

\section{Preliminaries}\label{sec:preliminaries}
In this section, we present a few preliminary results on Markov chains that will be used throughout the paper. 

\subsection{Moment bounds from Lyapunov-type drift conditions}
The results in this paper are based on studying the drift of functions of  Markov chains. A Lemma from \cite{hajek1982hitting} is usually used to get moment bounds based on conditions on one-step drift. However, in this paper, we will work with  the $m$-step drift instead, 
and so we need the following generalization of the Lemma from  \cite{hajek1982hitting}, which is proved in 
Appendix \ref{proof of Lemma: $m$-step bound for q_perp_norm_2}.
\begin{lemma}\label{Lemma: $m$-step bound for q_perp_norm_2}
        Let $\left\{\bm{Q}^t  ,\bm{X}^t \right\}_{t \geq 0}$ be an irreducible, aperiodic and positive recurrent Markov chain  over a countable state space $  (\mathcal{Q},\mathcal{X}  )$. Suppose $Z: \mathcal{X}\to \mathbb{R}_+$ is a nonnegative Lyapunov function. We define the $m$-step drift of $Z$ at $  (\bm{q},\bm{x}  )$ as
        \begin{equation}\label{eq: multistep drift}
            \Delta^m Z  (\bm{q}, \bm{x}  ) \triangleq   \left[Z  \left(\bm{Q}^{t+m},\bm{X}^{t+m}  \right)-Z  \left(\bm{Q}^t,\bm{X}^t \right)  \right]\mathcal{I}  \left(\left(\bm{Q}^{t},\bm{X}^{t}  \right) =   (\bm{q},\bm{x}  )  \right),
        \end{equation}
        where $\mathcal{I}  (\cdot  )$ is the indicator function. Thus, $\Delta^m Z  (\bm{q},\bm{x}  )$ is a random variable that measures the amount of change in the value of Z in $m$ steps, starting from state $  \left(\bm{q},\bm{x}\right)$. This drift is assumed to satisfy the following conditions, {for any $m$}:
        \begin{enumerate}  
        \item There exists a $\eta(m) > 0$, and a $\kappa(m) >0$ such that for any $t = 1, 2, . . .$ and for all $  (\bm{q},\bm{x}) \in   (\mathcal{Q},\mathcal{X}  )$ with $Z  (\bm{q},\bm{x}  ) \geq \kappa(m)$,
        \begin{equation*}
            E  \left[\Delta^m Z  (\bm{q}, \bm{x}  )\mid   \left(\bm{Q}^t,\bm{X}^t  \right) =   (\bm{q},\bm{x}  )  \right] \leq -\eta(m).
        \end{equation*}
        \item There exists a $D(m) < \infty$ such that for all $  \left(\bm{q},\bm{x}  \right) \in   (\mathcal{Q},\mathcal{X}  )$,
        \begin{equation*}
            P  (\abs{ \Delta^m Z  (\bm{q}, \bm{x}  )} \leq D(m)  ) =1.
        \end{equation*}
        \item {For any $t_0 \in [0,m]$, there exists a $\hat{D}(t_0) < \infty$ such that for all $  \left(\bm{q},\bm{x}  \right) \in   (\mathcal{Q},\mathcal{X}  )$,}
        \begin{equation*}
          { P  (\abs{ Z  (\bm{q}, \bm{x}  )} \leq \hat{D}(t_0))=1.}
        \end{equation*}
        
        \end{enumerate}
        Then, there exists a $\theta^*(m)>0$ and a $C^*(m) < \infty$ such that
        \begin{equation*}
            \limsup_{t \to \infty} E  \left[e^{\theta^*(m)Z  \left(\bm{Q}^t,\bm{X}^t  \right)}  \right] \leq C^{\star}(m).
        \end{equation*}
        If the Markov chain is positive recurrent, then $Z  \left(\bm{Q}^t,\bm{X}^t\right)$ converges in distribution to a random variable $\overline{Z}$ for which
        \begin{equation*}
            E  \left[e^{\theta^*\overline{Z}  }  \right] \leq C^{\star}(m),
        \end{equation*}
        which implies that all moments of $\overline{Z}$ exist and are finite.
\end{lemma}
\textbf{Remark}: if $\abs{ \Delta Z  (\bm{q}, \bm{x}  )} \leq D'$ is satisfied, Conditions 2 and 3 are satisfied with $D(m) = mD'$ and $\hat{D} = D'$.
\subsection{Geometric mixing of finite-state Markov chains}
The following two lemmas on geometric mixing of finite-state Markov chains will be exploited to obtain the results in the paper. 
\begin{lemma}\label{lem: geo}
    Let $\left\{X^t\right\}_{\ t\geq 0}$ be an irreducible, positive recurrent and aperiodic Markov chain on a finite-state space $\Omega$. Let $\pi$ denote its stationary distribution. Let $f(\cdot)$ be a real-valued function, i.e., $f:\Omega \to \mathbb{R}_+$.  Let $\lambda$ be the stationary mean of $f\left(X^t\right)$, i.e.,
    \begin{equation*}
    \lambda = E_{X\sim\pi}\left[f(X)\right].
    \end{equation*}
    Then,  there exist constants  $\alpha \in   (0,1  )$ and $C > 0$ such that, for any $m \in \mathbb{N}_+$, for any initial distribution $X^0 \sim \pi^0$, we have
        \begin{equation*}
          \vert E_{ X^0 \sim \pi^0 }    [  (f(X^m)-\lambda  )   ]  \vert \leq  2LC  \alpha^m,
        \end{equation*}
where $  L = \max_{x \in \Omega} f(x) $. 
\end{lemma}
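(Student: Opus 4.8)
The plan is to reduce the claim to the standard geometric convergence of a finite-state Markov chain to its stationary distribution in total variation. Let $P$ denote the transition matrix of $\{X^t\}_{t \ge 0}$ and represent distributions as row vectors, so that the law of $X^m$ started from $\pi^0$ is $\pi^m \triangleq \pi^0 P^m$. First I would rewrite the quantity of interest as an inner product against $f$, using that stationarity of $\pi$ gives $\lambda = \sum_{x \in \Omega} \pi(x) f(x)$:
\[
    E\!\left[\left(f(X^m)-\lambda\right)\mid X^0 \sim \pi^0\right] = \sum_{x \in \Omega}\left(\pi^m(x)-\pi(x)\right)f(x).
\]

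Next I would bound the absolute value by using $0 \le f(x) \le L$ together with the sign pattern of $\pi^m(x)-\pi(x)$, which collapses the sum into an $\ell_1$ distance and hence a total-variation distance:
\[
    \left|E\!\left[\left(f(X^m)-\lambda\right)\mid X^0 \sim \pi^0\right]\right| \le L\sum_{x \in \Omega}\left|\pi^m(x)-\pi(x)\right| = 2L\, d_{\mathrm{TV}}\!\left(\pi^m,\pi\right),
\]
where $d_{\mathrm{TV}}$ is the total-variation distance and I used $\sum_x|\mu(x)-\nu(x)| = 2\,d_{\mathrm{TV}}(\mu,\nu)$ for probability vectors $\mu,\nu$. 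This isolates all of the remaining work into bounding $d_{\mathrm{TV}}(\pi^m,\pi)$, uniformly over the initial law $\pi^0$.

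The remaining and main step, which I expect to be the crux, is geometric ergodicity of a finite-state, irreducible, aperiodic chain: there exist $C>0$ and $\alpha \in (0,1)$, depending only on $P$, such that $d_{\mathrm{TV}}(\pi^m,\pi) \le C\alpha^m$ for every $m$. This is exactly where finiteness of $\Omega$, irreducibility, and aperiodicity are used. One route is the Perron--Frobenius theorem, which ensures $P$ has $1$ as a simple eigenvalue with every other eigenvalue of modulus strictly below $1$, so $\alpha$ may be taken just above the second-largest eigenvalue modulus; a cleaner route is to observe that irreducibility plus aperiodicity force some power $P^{k}$ to have all strictly positive entries, whereupon Dobrushin's contraction coefficient of $P^{k}$ is strictly less than $1$ and a standard coupling/contraction argument gives geometric decay of $\|\pi^m-\pi\|_1$. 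Uniformity in $\pi^0$ is free: the map $\pi^0 \mapsto d_{\mathrm{TV}}(\pi^0 P^m,\pi)$ is convex, so its maximum over the probability simplex is attained at one of the finitely many point masses, and the geometric bound at each of these finitely many starting states can be combined into a single pair $(C,\alpha)$. Substituting this bound into the previous display yields $\left|E[(f(X^m)-\lambda)\mid X^0 \sim \pi^0]\right| \le 2LC\alpha^m$, which is precisely the claim.
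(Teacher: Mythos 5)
Your proof is correct and follows essentially the same route as the paper: both rewrite $E[f(X^m)-\lambda\mid X^0\sim\pi^0]$ as $\sum_{x}(\pi^0P^m(x)-\pi(x))f(x)$, bound it by $L$ times the $\ell_1$ distance (a factor $2L$ times total variation), and then invoke geometric ergodicity of a finite, irreducible, aperiodic chain, which the paper imports directly as Theorem 4.9 of \citet{levin2017markov} while you sketch its proof via Perron--Frobenius or Dobrushin contraction. Your convexity argument for uniformity over $\pi^0$ is just a repackaging of the paper's step of averaging $\sum_x \pi^0(x)\norm{P^m(x,\cdot)-\pi}_{TV}$ against the worst-case starting state, so the two proofs are substantively identical.
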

The lemma is proved in Appendix \ref{Proof: lem: geo}.

\begin{lemma}\label{lem: finite}
Let $\left\{X^t\right\}_{\ t\geq 0}$ be an irreducible, positive recurrent and aperiodic  Markov chain with finite-state space $\Omega$ and stationary distribution $\pi$. {{}Let $f(\cdot)$ be a real-valued function upper bounded by $A_{max}$, i.e., $f:\Omega \to \mathbb{R}_+, f(x)\leq A_{max},\ \forall x\in \Omega$.}  Let $\lambda$ be the stationary mean of $f(X^t)$, i.e.,
    \begin{equation*}
    \lambda = E_{\pi}\left[f(X)\right].
    \end{equation*}
    Then,
\begin{align*}
        &\lim_{m \to \infty} \frac{Var_{ X^0 \sim \pi^0 }  \left(\sum_{t=1}^m f(X^t)  \right)}{m} \\
        &= \gamma(0)+2\lim_{m \to
        \infty}\sum_{t=1}^{m}\frac{m-t}{m}\gamma(t) = \gamma(0)+2\lim_{m \to \infty}\sum_{t=1}^{m}\gamma(t),
\end{align*}
    where 
    \[
        \gamma(t) = E_{X^0 \sim \pi}\left[  \left(f\left(X^t\right)-\lambda  \right)  \left(f\left(X^0\right)-\lambda  \right)\right] 
    \]
    is the auto covariance function.
\end{lemma}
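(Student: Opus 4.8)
The plan is to exploit the stationarity induced by the initial condition $X^0 \sim \pi$ and then to control the autocorrelation sum using the geometric mixing established in Lemma \ref{lem: geo}. Since $X^0 \sim \pi$, the process $\{f(X^t)\}_{t \geq 1}$ is strictly stationary, so for any $t,s \geq 1$ the covariance $Cov(f(X^t), f(X^s))$ depends only on the lag $|t-s|$ and equals $\gamma(|t-s|)$. Expanding the variance of the sum as a double sum of covariances and grouping the terms by their common lag $k = |t-s|$ (there are $m$ diagonal terms at lag $0$ and $2(m-k)$ off-diagonal terms at each lag $k = 1, \dots, m-1$) yields
\begin{equation*}
    Var\left(\sum_{t=1}^m f(X^t) \mid X^0 \sim \pi\right) = m\,\gamma(0) + 2\sum_{k=1}^{m-1}(m-k)\gamma(k).
\end{equation*}
Dividing by $m$, and noting that the $t=m$ term in $\sum_{t=1}^m \frac{m-t}{m}\gamma(t)$ has coefficient $0$, gives the first claimed equality directly.

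For the second equality, the key step is to show that the autocorrelations are absolutely summable, which is exactly where Lemma \ref{lem: geo} enters. Writing $\gamma(k) = E_\pi\!\left[(f(X^0) - \lambda)\, E[(f(X^k) - \lambda) \mid X^0]\right]$ and applying Lemma \ref{lem: geo} to the inner conditional expectation (taking the initial distribution to be the point mass at the realized value of $X^0$) bounds that inner term by $2LC\alpha^k$; since $|f(X^0) - \lambda| \leq L$, this gives $|\gamma(k)| \leq 2L^2 C\, \alpha^k$ with $\alpha \in (0,1)$. Consequently both $\sum_k |\gamma(k)|$ and $\sum_k k\,|\gamma(k)|$ converge, the latter because $\sum_k k\,\alpha^k < \infty$.

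Finally, I would split
\begin{equation*}
    \sum_{k=1}^{m-1}\frac{m-k}{m}\gamma(k) = \sum_{k=1}^{m-1}\gamma(k) - \frac{1}{m}\sum_{k=1}^{m-1} k\,\gamma(k).
\end{equation*}
The first term converges to $\sum_{k=1}^\infty \gamma(k)$ by absolute summability, while the second term vanishes because $\sum_{k=1}^{m-1}k\,\gamma(k)$ stays bounded (by $\sum_k k\,|\gamma(k)| < \infty$) and is divided by $m$ — a direct instance of Kronecker's lemma. Letting $m \to \infty$ therefore identifies both $\lim_m \sum_{t=1}^m \frac{m-t}{m}\gamma(t)$ and $\lim_m \sum_{t=1}^m \gamma(t)$ with the common value $\sum_{t=1}^\infty \gamma(t)$, establishing the second equality.

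I do not anticipate a genuine obstacle, since the computation is classical once stationarity is invoked; the only place care is needed — and the only point at which the Markovian structure is genuinely used — is establishing the geometric decay, and hence absolute summability, of $\gamma(k)$ via Lemma \ref{lem: geo}. Without this decay the Cesàro-type limit $\lim_m \sum_t \frac{m-t}{m}\gamma(t)$ need not exist nor agree with $\lim_m \sum_t \gamma(t)$, so verifying the bound $|\gamma(k)| \leq 2L^2 C\,\alpha^k$ is the substantive step of the argument.
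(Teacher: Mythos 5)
Your proposal is correct and takes essentially the same approach as the paper: both obtain the geometric decay $\abs{\gamma(t)} \le \text{const}\cdot\alpha^t$ by conditioning on $X^0$ and applying Lemma \ref{lem: geo}, and both establish the second equality by showing that the correction term $\frac{1}{m}\sum_{t=1}^{m} t\,\gamma(t)$ vanishes as $m \to \infty$. The only cosmetic differences are that you verify the variance-of-a-sum expansion explicitly (the paper leaves the first equality implicit) and invoke Kronecker's lemma where the paper evaluates $\sum_{t=1}^{m}\frac{t}{m}\alpha^t$ in closed form.
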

The lemma is proved in Appendix \ref{Proof of Lemma: finite}. 

{\textbf{Remark}: There are several known equivalent formulations of the asymptotic variance. For instance, it can be shown that (see Theorem 21.2.5 in \cite{douc2018markov}) under appropriate assumptions,
\begin{align}
\lim_{m \to \infty} \frac{Var_{ X^0 \sim \pi^0 }  \left(\sum_{t=1}^m f(X^t)  \right)}{m} = 
2 E_{X \sim \pi}\left[    \left(f\left(X\right)-\lambda  \right)\hat{f}(X)\right]
-E_{X \sim \pi}\left[    \left(f\left(X\right)-\lambda  \right)^2\right] \label{eq:Poisson},
\end{align} where $\hat{f}(\cdot)$ is a solution of the Poisson equation of the Markov Chain, $\hat{f}(x) = f(x) + E\left[\hat{f}(X^1)|X^0=x\right] - \lambda$.
In this paper, we will state all the results in terms of the asymptotic variance in Lemma \ref{lem: finite}, and our results can be easily reformulated in terms of equivalent expressions such as \eqref{eq:Poisson}.

{\textbf{Remark}: The finite state space assumption is used primarily for establishing the positive recurrent result for the single server queue and the switch system in heavy traffic regime. All other results still hold for general state space. One way of stating the heavy-traffic results in the paper is for general state space  \textit{assuming} positive recurrence of the system (and to prove positive recurrence in the case of finite state space). However, we feel that such a presentation would be confusing, and so we will just present the finite state space case in the following sections.}

\section{Single Server Queue} \label{sec: singleserver}
In this section, we consider a single server queue operating in discrete-time. Under i.i.d. arrivals, queue length in such a system is equivalent to the waiting time in a $G/G/1$ queue. Drift arguments were used to study the heavy-traffic mean queue length in \cite{eryilmaz2012asymptotically,kingman1961single,srikant2013communication}, and transform method was used to study the heavy-traffic stationary distribution in \cite{hurtado2020transform}. In this section, we consider the discrete-time single server queue under Markovian arrivals, and extend both the drift method and the transform method. The key ingredients are to consider the dynamic $m$-step drift and exploit the geometric mixing of irreducible, positive recurrent, aperiodic finite-state-space Markov chains. 

\subsection{Mathematical Model}\label{subsec Mathematical Model.}
Consider a single server queue operating in discrete-time. 
Let $Q^t$ be the number of customers in the system at the beginning of time slot $t$. Arrivals occur according to an underlying  Markov chain $\left\{X^t\right\}_{\ t\geq 0}$ where the number of arrivals in the time slot $t$ is given by $A^t = f  (X^t  )$ for some non-negative integer-valued function $f(\cdot)$. Potential service $S^t$ is assumed to be i.i.d. with mean $\mu$ and variance $\sigma_s^2$. 

Assume that $\left\{X^t\right\}_{\ t\geq 0}$ is irreducible, positive recurrent and aperiodic on a finite-
state space $\Omega$. Thus $\left\{X^t\right\}_{\ t\geq 0}$ converges to its stationary distribution $\pi$ with geometric rate. Further assume that $A^t$ and $S^t$ are bounded above by $A_{max}$ and $S_{max}$ respectively.

In each time slot, we assume that the service occurs after arrivals, and the system evolves as follows: for each $t = 1,\ 2,...$
\begin{align}
        Q^{t+1}=&\max\left\{Q^t + A^t - S^t,0\right\}=Q^t + A^t - S^t+ U^t, \label{eq:1}
\end{align}
where $U^t$ denotes the unused service and is defined by $U^t = Q^{t+1} -\left(Q^t + A^t - S^t \right) $. From the definition of the unused service, we have 
\begin{align}\label{eq: 2}
    Q^{t+1}U^t=0. 
\end{align}

In order to study the heavy-traffic behavior of the single server queue, we will consider a sequence of arrival processes such that the  arrival rate approaches the service rate $\mu$. To this end, let $\left\{X^t\right\}^{  (\epsilon  )}_{t\geq 0}$ be a set of irreducible, positive recurrent and aperiodic underlying Markov chains indexed by the heavy-traffic parameter $\epsilon \in   (0,\mu  )$. Assume that the arrival process is such that the two dimensional Markov chain     $
      \left\{ \left(\left(Q^t\right)^{  (\epsilon  )},\ \left(X^t\right)^{  (\epsilon  )} \right)\right\}_{t\geq 0 }
    $ is irreducible and aperiodic.  
For any fixed $\epsilon$, let $\overline{X}^{  (\epsilon  )}$ be the steady state variable to which $\left(X^t\right)^{  (\epsilon  )}$ converges in distribution with $E\left[\overline{A}^{(\epsilon)} \right] = E\left[f\left(\overline{X}^{  (\epsilon  )}\right)\right] = \lambda^{(\epsilon)} = \mu -\epsilon$. 

Let $\gamma^{(\epsilon)}(t)$ to be the auto covariance function of the arrival process starting from steady state $X^0 \stackrel{d}{=}\overline{X}^{  (\epsilon  )}$, $\overline{A}^{(\epsilon)}= f\left(\overline{X}^{  (\epsilon  )}\right)$ and $\left(A^t\right)^{(\epsilon)} = f\left(\left(X^t\right)^{(\epsilon)}\right)$ indexed by $\epsilon$, i.e., 
    \[
        \gamma^{(\epsilon)}(t) = E \left[  \left(\left(A^t\right)^{(\epsilon)}-\lambda  \right)  \left(\left(A^0\right)^{(\epsilon)}-\lambda  \right) \right].
    \]
    Let
    \[
        \left(\sigma_a^{(\epsilon)}\right)^2 = \lim_{m \to \infty} \frac{Var \left(\sum_{t=1}^m \left(A^t\right)^{(\epsilon)}  \right)}{m}=\gamma^{(\epsilon)}(0)+2\lim_{m \to \infty}\sum_{t=1}^{m}\gamma^{(\epsilon)}(t),
    \]
where the equality follows from Lemma \ref{lem: finite}. {{} We will call $\left(\sigma_a^{(\epsilon)}\right)^2$ the effective variance. }
Assume that $\forall t \in \mathbb{N}$, 
\begin{align}\label{eq: gamma to gamma}
    \lim_{\epsilon \to 0}\gamma^{(\epsilon)}(t) = \gamma(t).
\end{align}
    Define 
    \begin{align*}
        \sigma_a^2 = \gamma(0)+2\lim_{m \to \infty}\sum_{t=1}^{m}\gamma(t).
    \end{align*}
Then, we have the following claim, which establishes and interchange of limit, and so we call $\sigma_a^2 $ the limiting effective variance. 
\begin{claim}\label{claim: interchange of limit}
$\lim_{\epsilon \to 0} \left(\sigma_a^{(\epsilon)}\right)^2 = \sigma_a^2.$
\end{claim}
The claim is proved in Appendix \ref{proof_Claim_interchange_of_limit}.

\subsection{Heavy-traffic Limit of the Mean Queue Length}
We now present the generalization of the Kingman's heavy-traffic bound \cite{kingman1961single} in a single server queue under Markovian arrivals. In other words, we characterize the steady state mean queue length in a single server queue in the heavy-traffic regime. 
\begin{theorem}
\label{thm: pos rec single server queue}
 Let $\left(A^t\right)^{  (\epsilon  )}$ be a set of arrival processes determined by the corresponding Markov chains, $\left\{\left(X^t\right)^{(\epsilon)}\right\}_{\ t\geq 0}$ as described before,
with steady-state arrival rates $\lambda^{  (\epsilon  )} = \mu -\epsilon$. Let $\alpha^{  (\epsilon  )}$ and $C^{  (\epsilon  )} $ be the corresponding geometric mixing parameters as mentioned in Lemma \ref{lem: geo}. 
 Assume $\sup_\epsilon \alpha^{  (\epsilon  )} \leq \alpha <1$ and $\sup_\epsilon C^{  (\epsilon  )} \leq C < \infty$. Then we have 
\begin{enumerate}
    \item For each $\epsilon \in   (0,\mu  )$, the two dimensional Markov chain 
    \begin{displaymath}
      \left\{ \left(\left(Q^t\right)^{  (\epsilon  )},\ \left(X^t\right)^{  (\epsilon  )} \right)\right\}_{t\geq 0 }
    \end{displaymath}
     is positive recurrent. 
    \item Let $\overline{Q}^{  (\epsilon  )}$ be a steady-state random variable to which the queue length processes $\left\{Q^t\right\}^{  (\epsilon  )}_{t \geq 1}$ converges in distribution. We have
    \begin{equation*}
            \lim_{\epsilon \to 0} E    \left[\epsilon \overline{Q}^{  (\epsilon  )}   \right]=\frac{\sigma_a^2+\sigma_s^2}{2}.
    \end{equation*}
    \end{enumerate}
\end{theorem}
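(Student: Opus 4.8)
The strategy is the classical drift-method argument of \citet{eryilmaz2012asymptotically}, but carried out on the $m$-step drift so that the Markovian correlation between $Q^t$ and $A^t$ can be controlled via the geometric mixing in Lemma \ref{lem: geo}. The window length $m$ will be chosen as a function of $\epsilon$ at the end.

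\textbf{Part 1 (positive recurrence).} To establish positive recurrence of the two-dimensional chain for each fixed $\epsilon$, I would apply the $m$-step Foster–Lyapunov criterion behind Lemma \ref{Lemma: $m$-step bound for q_perp_norm_2}, using the Lyapunov function $Z(\bm{q},\bm{x}) = q$ (or the norm of $q$). The key is to show negative $m$-step drift of $Q^t$ outside a finite set. Expanding $Q^{t+m} - Q^t$ by telescoping \eqref{eq:1}, the change is $\sum_{s=t}^{t+m-1}(A^s - S^s + U^s)$. Taking conditional expectation given $(Q^t,X^t)=(q,x)$, the unused-service terms $U^s$ contribute nonpositively once $q$ is large (the queue cannot empty within $m$ steps if $q > m S_{max}$), while $E[\sum A^s \mid X^t=x] \approx m\lambda^{(\epsilon)}$ up to a mixing error bounded by $2L C \alpha^m/(1-\alpha)$ from Lemma \ref{lem: geo}, and $E[\sum S^s] = m\mu$. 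Since $\lambda^{(\epsilon)} = \mu - \epsilon$, the dominant drift is $-m\epsilon$, which is strictly negative; boundedness of the drift follows from $A_{max}, S_{max}$. This gives positive recurrence.

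\textbf{Part 2 (heavy-traffic limit).} This is the heart of the theorem and uses the standard setting-drift-to-zero idea. In steady state, $E[(Q^{t+m})^2 - (Q^t)^2] = 0$. Squaring the $m$-step recursion $Q^{t+m} = Q^t + \sum(A^s - S^s) + \sum U^s$ and using the product property \eqref{eq: 2} $Q^{t+1}U^t = 0$ to kill cross terms involving unused service, I expect to obtain, after rearrangement, an identity of the schematic form
\begin{equation*}
0 = 2\,E\Big[\overline{Q}\sum_{s}(A^s - S^s)\Big] + E\Big[\Big(\sum_s(A^s - S^s)\Big)^2\Big] - E\Big[\Big(\sum_s U^s\Big)^2\Big] + (\text{cross terms}).
\end{equation*}
The first term carries the $-m\epsilon$ drift that, after multiplying by $\epsilon$ and sending $\epsilon \to 0$, balances against the variance term. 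The variance term $E[(\sum(A^s - S^s))^2]$ divided by $m$ converges, by Lemma \ref{lem: finite} and independence of service, to $\sigma_a^2 + \sigma_s^2$ (using \eqref{eq: gamma to gamma} to pass $(\sigma_a^{(\epsilon)})^2 \to \sigma_a^2$). The unused-service and error terms must be shown to vanish after the $\epsilon$-scaling.

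\textbf{Main obstacle.} The hard part is controlling the correlation term $E[\overline{Q}^{(\epsilon)} \sum_s ((A^s)^{(\epsilon)} - \lambda^{(\epsilon)})]$ — the coupling between the current queue length and the \emph{future} arrivals over the window. Under i.i.d. arrivals this term is zero by independence, but here it is precisely where the Markovian memory enters. I would bound it using Lemma \ref{lem: geo}: conditioning on $X^t$, each summand's deviation from $\lambda^{(\epsilon)}$ decays like $C\alpha^s$, so the whole sum contributes an error of order $\overline{Q}^{(\epsilon)} \cdot LC/(1-\alpha)$, independent of $m$. Multiplied by the scaling $\epsilon/m$ and using the uniform bounds $\sup_\epsilon \alpha^{(\epsilon)} \le \alpha < 1$, $\sup_\epsilon C^{(\epsilon)} \le C$ together with the moment bound $E[\epsilon \overline{Q}^{(\epsilon)}] = O(1)$ from Part 1, this error vanishes as $\epsilon \to 0$ provided $m = m(\epsilon) \to \infty$. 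The delicate balance is that $m$ must grow fast enough that the mixing error $\alpha^m$ and the $1/m$-normalized unused-service term disappear, yet the product $m\epsilon$ stays controlled; choosing, for instance, $m(\epsilon) = \lceil 1/\epsilon \rceil$ or $m = \Theta(\log(1/\epsilon)/\log(1/\alpha))$ and verifying that all error terms are $o(1)$ under this choice is the crux of the argument.
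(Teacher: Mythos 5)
Your Part~1 is sound and is essentially the paper's argument: the paper runs the $m$-step Foster--Lyapunov criterion on the quadratic function $\left(Q^{(\epsilon)}\right)^2$ (Claim \ref{claim 3.2.1}) with the window $m(\epsilon)$ chosen so that the accumulated mixing error $2A_{max}C\frac{1-\alpha^m}{1-\alpha}$ is dominated by $\frac{1}{2}m\epsilon$; your linear test function with the same telescoping works equally well (the paper itself uses it in the proof of Claim \ref{claim: q^2 finite}).

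Part~2, however, has a genuine gap in how you dispose of the correlation term $E\left[\overline{Q}^{(\epsilon)}\sum_{s=0}^{m-1}\left(A^{t+s}-\lambda^{(\epsilon)}\right)\right]$. After setting the $m$-step drift of $Q^2$ to zero and normalizing by $2m$, this term enters the identity for $\epsilon E\left[\overline{Q}^{(\epsilon)}\right]$ with a factor $1/m$, \emph{not} the factor $\epsilon/m$ you assert. Lemma \ref{lem: geo} gives $\abs{E\left[\sum_{s<m}\left(A^{t+s}-\lambda\right)\mid X^t=x\right]}\leq 2A_{max}C/(1-\alpha)$ uniformly in $m$, so your bound on the term is of order $E\left[\overline{Q}^{(\epsilon)}\right]/m=\Theta\left(1/(m\epsilon)\right)$, since $E\left[\overline{Q}^{(\epsilon)}\right]=\Theta(1/\epsilon)$; it vanishes only if $m\epsilon\to\infty$. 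But the unused-service terms produced by squaring the $m$-step recursion --- $E\left[Q^t\sum_s U^{t+s}\right]$, $E\left[\left(\sum_s(A^{t+s}-S^{t+s})\right)\left(\sum_s U^{t+s}\right)\right]$, $E\left[\left(\sum_s U^{t+s}\right)^2\right]$ --- are each $O(m^2\epsilon)$ (each carries a factor $E\left[\sum_s U^{t+s}\right]=m\epsilon$ against an $O(m)$ companion), hence $O(m\epsilon)$ after normalization, which vanishes only if $m\epsilon\to 0$. No window satisfies both requirements, so the proof as sketched cannot close. Both of your concrete window choices also fail even under a correct decomposition: $m=\lceil 1/\epsilon\rceil$ leaves the unused-service error $\Theta(1)$ (the paper's Claim \ref{claim: 3.2.2} carries the term $2m(A_{max}+\lambda)\epsilon$, which must go to zero), and $m=\log(1/\epsilon)/\log(1/\alpha)$ gives only $\alpha^m=\Theta(\epsilon)$, leaving an order-one coefficient error against the $1/\epsilon$-sized queue (a strictly larger constant would be needed).

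The paper escapes this dead end by never bounding the whole window-correlation through mixing. It sets the \emph{one-step} drift to zero, obtaining the same-time term $E\left[2Q^t(A^t-\lambda)\right]$ in \eqref{eq: qe}, then uses stationarity, $E\left[2Q^t(A^t-\lambda)\right]=E\left[2Q^{t+m}\left(A^{t+m}-\lambda\right)\right]$, and unrolls $Q^{t+m}$ backwards via \eqref{eq:1}. This splits the correlation into (i) the autocovariances $2\sum_{i=1}^{m}\gamma^{(\epsilon)}(i)$, which are \emph{kept} --- they are exactly the non-$\gamma(0)$ part of $\sigma_a^2$, not an error (in your scheme the variance term already supplies all of $\sigma_a^2$, so your correlation term must be shown to be $o(m)$, which is true but requires precisely this telescoping, e.g.\ $E\left[Q^t\left(A^{t+s}-\lambda\right)\right]\approx\sum_{i>s}\gamma(i)$, and not the crude bound $\norm{g}_{\infty}E\left[\overline{Q}^{(\epsilon)}\right]$); (ii) a single term $E\left[2Q^t\left(A^{t+m}-\lambda\right)\right]$ with queue and arrival separated by the full window, bounded by $4A_{max}C\alpha^m E\left[Q^t\right]$; and (iii) unused-service terms of order $m\epsilon$. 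The two surviving requirements, $\alpha^m/\epsilon\to 0$ and $m\epsilon\to 0$, are now compatible, and the paper takes $m=\lfloor 1/\sqrt{\epsilon}\rfloor$, finishing with the limit-interchange argument of Claim \ref{claim: 3.2.3} (which your appeal to Lemma \ref{lem: finite} and \eqref{eq: gamma to gamma} only gestures at). One further omission: before setting any drift to zero in steady state you must establish $E\left[\left(\overline{Q}^{(\epsilon)}\right)^2\right]<\infty$, as in Claim \ref{claim: q^2 finite} via Lemma \ref{Lemma: $m$-step bound for q_perp_norm_2}; positive recurrence from Part~1 alone does not yield the moment bound $E\left[\epsilon\overline{Q}^{(\epsilon)}\right]=O(1)$ that you invoke.
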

\begin{proof}{of Theorem \ref{thm: pos rec single server queue}.}
 To prove the first part of the theorem, we use $m$-step Foster-Lyapunov theorem  (Proposition 2.2.4 in \cite{fayolle1995topics}).  We consider the quadratic Lyapunov function, $\left(Q^{(\epsilon)}\right)^2$, and show that its $m$-step drift is negative except in a finite set. 
 Consider a fixed $\epsilon \in (0,\mu)$.  For ease of exposition, we suppress the superscript $  (\cdot)^{(\epsilon )}$.
 
\begin{claim}\label{claim 3.2.1}
 For any $\epsilon \in (0,\mu)$ and $m \in \mathbb{N}_+$
 \begin{align*}
    \begin{aligned}
       & E  \left[\left(Q^{t+m}\right)^2-\left(Q^t\right)^2\mid   \left(Q^t,X^t  \right)=  (q,x  )  \right]\\
       & \leq E  \left [2Q^t\left(\sum_{i=1}^{m}  (A^{t+m-i}-\lambda  )\right)-2m\epsilon Q^t+m K_0  (m  )\mid   \left(Q^t,X^t  \right)=  (q,x  )   \right],\\
    \end{aligned}
\end{align*}
 where
 \begin{align*}
    K_0  (m ) = 2m  (A_{max}+S_{max}  )  (A_{max}+S_{max}  )+  (A_{max}+S_{max}  )^2.
\end{align*}
 \end{claim}
The proof of Claim \ref{claim 3.2.1} is in Appendix \ref{proof_Claim_3.2.1}.

We now consider the first term on the RHS. The main challenge in bounding this term is the correlation between queue length and arrival at the same time slot, due to the Markovian nature of the arrival process. We use the geometric mixing of the underlying Markov chain as stated in Lemma \ref{lem: geo}, to get 
\begin{align}\label{eq: geom mixing ssq}
        &E  \left[2Q^t  \left(A^{t+m}-\lambda \right )\mid   \left(Q^t,X^t  \right)=  (q,x  )  \right]\nonumber\\
        &\leq \abs{E  \left[2Q^t  \left(A^{t+m}-\lambda \right )\mid  \left(Q^t,X^t  \right)=  (q,x  )   \right]} \leq   4A_{max}C\alpha^m q .   
\end{align}
Thus,
\begin{align*}
        & E  \left [2Q^t\left(\sum_{i=1}^{m}  (A^{t+m-i}-\lambda  )\right)-2m\epsilon Q^t\mid   \left(Q^t,X^t  \right)=  (q,x  )   \right]\\
        &\leq 2q    \left[  \left(2A_{max}C\frac{1-\alpha^m}{1-\alpha}  \right)-m\epsilon   \right].
\end{align*}
Define $m  (\epsilon  )= \min\left\{m \in \mathbb{N}_+: 2A_{max}C\frac{1-\alpha^m}{1-\alpha}<\frac{1}{2}m\epsilon \right\}$. Such an $m  (\epsilon  )$ exists because $2A_{max}C\frac{1-\alpha^m}{1-\alpha}$ is finite and $\frac{1}{2}m\epsilon \to \infty$ as $m \to \infty$. Let $K_1(\epsilon) = \frac{\epsilon m(\epsilon)}{2}$. Then, we have,
 \begin{align*}
       & E  \left[\left(Q^{t+m(\epsilon)}\right)^2-\left(Q^t\right)^2\mid   \left(Q^t,X^t  \right)=  (q,x  )  \right] \leq -2K_1(\epsilon)q +m(\epsilon) K_0  (m(\epsilon)  ),
\end{align*}
Let $\mathcal{B} = \left\{q: q \leq \frac{m(\epsilon) K_0  (m(\epsilon)  )}{K_1({\epsilon})}\right\}$ denote a finite set. Then, we have
 \begin{align*}
       & E  \left[\left(Q^{t+m(\epsilon)}\right)^2-\left(Q^t\right)^2\mid   \left(Q^t,X^t  \right)=  (q,x  )  \right] \\
       &\leq -m(\epsilon) K_0  (m(\epsilon)  )\mathcal{I}  (q \in \mathcal{B}^c ) + m(\epsilon) K_0  (m(\epsilon)  )\mathcal{I}(q \in \mathcal{B}). 
\end{align*}
       
Positive recurrence of the two dimensional Markov chain $\{  (Q^t,\ X^t  ),\ t\geq 0\}$ then follows from the $m$-step Foster-Lyapunov theorem. 

Therefore, using irreducibility and aperiodicty of the two dimensional Markov chain $\{  (Q^t,\ X^t  ),\ t\geq 0\}$, we know that a unique stationary distribution exists. To prove the second part of the theorem, we will set the $m$-step drift of the quadratic test function to zero under the stationary distribution. In order to do this, we should first ensure that the stationary expectation of the quadratic function is finite, which is given by the following claim: 
\begin{claim}\label{claim: q^2 finite}
    For any $\epsilon >0$ in steady-state,
\begin{align*}
    E  \left[  \left(\overline{Q}^{  (\epsilon  )}  \right)^2  \right] < \infty.
\end{align*} 
\end{claim}
The proof of Claim \ref{claim: q^2 finite} is in \ref{proof q^2 finite}.

In the rest of the proof, we consider the system in its steady-state, and so for every time $t$, 
 $\left(Q^t\right)^{(\epsilon)} \stackrel{d}= \overline{Q}^{(\epsilon)}$ {{}(equivalently, it can be thought of as initializing the system in its steady-state)}. For ease of exposition, we again drop the superscript $(\cdot)^{(\epsilon)}$
 and just use $Q^t$. 
 Then, $A^t$  denotes the arrival in steady state, and the queue length at time $t+m$, is  $Q^{t+m}=Q^t+\sum_{l=0}^{m-1} A^{t+l}-\sum_{l=0}^{m-1} S^{t+l}+\sum_{l=0}^{m-1} U^{t+l}$, 
 which has the same distribution as $Q^t$ 
 for all $m \in \mathbb{N}_+$.  We can set the one-step drift equal to zero:
    \begin{align}\label{eq: one-step}
            0=E  \left[\left(Q^{t+1} \right)^2-\left(Q^{t} \right)^2 \right]=E  \left[\left(Q^t+A^t-S^t+U^t \right)^2-\left(Q^{t} \right)^2 \right],
    \end{align}
    where the last equation follows from (\ref{eq:1}). Expand (\ref{eq: one-step}) and apply (\ref{eq: 2}), we have
    \begin{align}\label{eq: qe}
       2\epsilon E  \left[Q^t   \right]=E  \left[2Q^t  (A^t-\lambda  )  \right]-E  \left[\left(U^t\right)^2  \right] + E  \left[  (A^t-\lambda  )^2  \right]+\sigma_s^2+  \epsilon^2.
    \end{align}

    Eq \eqref{eq: qe} can be obtained using standard arguments. The heavy-traffic limit of scaled queue length $\epsilon E  \left[Q^t \right]$ is typically obtained by  bounding the RHS and then letting $\epsilon \to 0$. The main challenge here is bounding the $E  \left[2Q^t  (A^t-\lambda  )  \right]$ term since the queue length and the arrival are correlated due to the Markovian arrival process.
    We bound this term by recursively expanding $Q^t$ using (\ref{eq:1}), and using Markov chain mixing result from Lemma \ref{lem: geo}. We then obtain the following claim, the proof of which can be found in Appendix \ref{proof Claim 3.2.2}.
    
    \begin{claim}\label{claim: 3.2.2}
    For any $\epsilon \in (0,\mu)$ and $m \in \mathbb{N}_+$, we have
    \begin{align*}
             2\left(1-2A_{max}C\frac{\alpha^m}{\epsilon}\right) E\left[\epsilon Q^t\right] &\leq \gamma(0) + 2\sum_{i=1}^{m}\gamma(i)  + \sigma_s^2 + 2m  (A_{max}+\lambda  ) \epsilon +   \epsilon^2\\
             2\left(1+2A_{max}C\frac{\alpha^m}{\epsilon}\right) E\left[\epsilon Q^t\right]  \geq& \gamma(0) + 2\sum_{i=1}^{m}\gamma(i)   + \sigma_s^2- 2m  (A_{max}+\lambda  )\epsilon\\& -  S_{max}\epsilon +   \epsilon^2,
    \end{align*}  
    \end{claim}

    Note that the claim is true for all $\epsilon$ and $m$. Put ${  (\cdot   )}^{  (\epsilon  )}$ back.  For a given $\epsilon$, we now pick $m = \left\lfloor \frac{1}{\sqrt{\epsilon}} \right\rfloor$, and take the limit as $\epsilon \to 0$ to get
    \begin{align}\label{eq: claim 3.2.3}
        \lim_{\epsilon \to 0} E  [\epsilon\overline{  Q}^{(\epsilon)}]=\frac{\lim_{\epsilon \to 0} \gamma^{(\epsilon)}(0) + \lim_{m(\epsilon) \to \infty} 2\sum_{t=1}^{m(\epsilon)}\gamma^{(\epsilon)}(t)+\sigma_s^2}{2}.
    \end{align}
\textbf{Discussion.} The key challenge in the proof is in handling the $E  \left[Q^t  (A^t-\lambda  )  \right]$ term. When the arrivals are i.i.d., the expectation can simply be written as a product of expectations. Under Markovian arrivals, this cannot be done due to correlation between the queue length and the arrivals at the same time slot. A multistep drift approach is usually used \cite{rajagopalan2009network} to overcome such a difficulty while establishing positive recurrence. We adopt the same approach in the first part of the proof to establish positive recurrence. However, it is unclear if such a multistep drift argument is refined enough to provide an exact expression for the mean heavy-traffic queue length. So, in the second part of the proof, we simply use the one-step drift, and use a different approach to bound the term $E  \left[Q^t  (A^t-\lambda  )  \right]$. Essentially, we recursively use \eqref{eq:1} $m$ times, to open up $Q^t$ in terms of $Q^{t-m}$ and eventually get a term of the form,  $E  \left[2Q^{t-m}  (A^t-\lambda  )  \right]$. Now, for large enough $m$, $Q^{t-m}$ and $A^t$ are approximately independent, due to fast mixing of the Markov chain underlying the arrival process, and so we can approximate the expectation by the product of expectations. This recursive opening gives several other terms of the form $E  \left[ (A^{t-i}-\lambda  )   (A^t-\lambda  )  \right]$, which give us the auto covariance of the arrival process in Claim \ref{claim: 3.2.2}, and eventually in Theorem \ref{thm: pos rec single server queue}.

Note that for the argument to work, $m$ should be picked carefully. In particular, it should be large enough to ensure independence between the queue length and the arrival, and on the other hand, it should be small enough to ensure that the error terms we introduce due to recursive opening are negligible in the heavy-traffic regime. It turns out that any $m$ that is between {$\Omega\left( \ln \frac{1}{\epsilon} \right)$} and $O(\frac{1}{\epsilon})$ works, and we choose  $m = \left\lfloor \frac{1}{\sqrt{\epsilon}} \right\rfloor$  arbitrarily within the range. 

%\textsuperscript{\dag} \footnotetext{{\textsuperscript{\dag}Let $f(x)$ and $g(x)$ be defined on some unbounded subset of the positive real numbers, and $g(x)$ be strictly positive for all large enough values of $x$. Then $f(x) = \Omega(g(x))$, if there exists a positive real number $M$ and a real number $x_0$ such that
% $f(x)\geq Mg(x), {\text{ for all }}x\geq x_{0}$.}}

The following claim, which is proved in Appendix \ref{proof: claim 3.2.3} is useful to evaluate
    \begin{align*}
        \lim_{m(\epsilon) \to \infty}\sum_{t=1}^{m(\epsilon)}\gamma^{(\epsilon)}(t).
    \end{align*}
    
    \begin{claim}\label{claim: 3.2.3}
    For any $\epsilon \in (0,\mu)$ and $m(\epsilon) = \left\lfloor \frac{1}{\sqrt{\epsilon}} \right\rfloor$, we have
        \begin{equation}
        \begin{aligned}
        \lim_{m(\epsilon) \to \infty}\sum_{t=1}^{m(\epsilon)}\gamma^{(\epsilon)}(t) 
        = \lim_{M \to \infty} \sum_{t=1}^{M} \gamma(t).    
       \end{aligned}
        \end{equation}
    \end{claim}
    Finally, combining (\ref{eq: claim 3.2.3}) and Claim \ref{claim: 3.2.3}, we have
    \begin{equation*}
        \begin{aligned}
            \lim_{\epsilon \to 0} E  \left[\epsilon \overline{Q}^{(\epsilon)}\right]
            &=\frac{ 2\lim_{M \to \infty} \sum_{i=1}^{M} \gamma(t)+\lim_{\epsilon \to 0} \gamma^{(\epsilon)}(0) +\sigma_s^2}{2} =  \frac{\sigma_a^2+\sigma_s^2}{2}. 
        \end{aligned}
    \end{equation*}
\end{proof}
Note that the key idea in the proof is to consider $m$-step drift where $m$ is picked to be a function of the heavy-traffic parameter $\epsilon$. In addition, mixing time bounds on the underlying Markov chain are exploited. 

Moreover, we also derive the heavy-traffic limiting distribution of the scaled queue length, which is in Appendix \ref{Appd, queue lenth distribution}.  

\section{Input Queued Switch} \label{sec:switch}
In this section, we will study an input queued switch operating under Markovian arrivals, and present an exact characterization of mean sum of the queue lengths in heavy-traffic. 
In this section, we first introduce the mathematical model of a switch under Markovian arrival and the MaxWeight algorithm. Then, we present throughput optimality, state space collapse and asymptotically tight upper and lower bounds under the MaxWeight scheduling algorithm, which are proved using the $m$-step Lyapunov-type drift argument developed in the previous section. 
For completeness, the universal lower bound under Markovian arrival under any feasible scheduling algorithm is also presented here.

{\bf Note on Notation.} We adopt the notation and definitions in \cite{maguluri2016heavy}. We restrict our discussion in Euclidean space $\mathbb{R}^{N^2}$. For ease of exposition and understanding, we express our elements $\bm{x}$ in $\mathbb{R}^{N^2}$ in two equivalent ways. First, as a $N^2$-dimensional vector which is the standard representation in Euclidean space. Second, as a $N\times N$ matrix with the $(i, j)$ element denoted by $\bm{x}_{ij}$. 
For any two vectors $\bm{x}$ and $\bm{y}$, the inner product is defined by,
\begin{equation*}
        \left\langle \bm{x}, \bm{y} \right \rangle \triangleq \sum_{i=1}^N\sum_{j=1}^N x_{ij}y_{ij}.
\end{equation*}
We say that $\bm{X}=\bm{Y}$ if $X_{ij}=Y_{ij}, \forall i,j \in\{1,2,..., N\}$.
The vector consisting of all ones is denoted by $\bm{1}$. Define 
$\bm{e}^{(i)}$ as the matrix with ones in the $i$-th row and zeros else where, and similarly, 
$\tilde{\bm{e}}^{(j)}$ is the matrix with ones in the $j$-th column and zeros everywhere else, i.e.,  
\begin{equation}\label{e_ij}
    \begin{aligned}
        &\bm{e}^{(i)}_{i,j}=1\ \forall j, &\bm{e}^{(i)}_{i',j}=0\ \forall i'\neq i,\ \forall j.\\
        &\tilde{\bm{e}}^{(j)}_{i,j}=1\ \forall i, &\tilde{\bm{e}}^{(j)}_{i,j'}=0\ \forall j'\neq j,\ \forall i.\\      
    \end{aligned}
\end{equation}

\subsection{Mathematical Model of a Switch}
An $ N \times N $ crossbar switch, also known as an input queued switch, has 
$N$ input ports and $N$ output ports with a separate queue for each input-output pair. Such a system can be modeled as an 
$N\times N$ matrix of queues, where each $(i,j)^{\text{th}}$ queue corresponds to packets entering input port $i$ and intended for output port $j$. 
The four key elements in the mathematical model of a switch under Markovian arrivals are as follows:

     $\bm{A}^t \in \mathbb{R}^{N^2}$: the vector of all arrivals at time slot $t$, of which the $(i,j)^{\text{th}}$ element corresponds to the number of packets arriving at the $i$-th input port and to be delivered to the $j$-th output port. 
     
     $\bm{S}^t \in \mathbb{R}^{N^2}$: the vector of all services at time slot $t$. In each time slot, in each column and row, at most one queue can be served and the service is at most $1$. 
     
     $\bm{U}^t \in \mathbb{R}^{N^2}$: the vector of unused services at time slot $t$. 
     
     $\bm{Q}^t \in \mathbb{R}^{N^2}$: the vector of all queue lengths at time slot $t$.

The arrival process $A_{ij}^t$ is determined by an underlying  Markov chain $\left\{\bm{X}^t\right\}_{t\geq 0 }$ by $A_{ij}^t= {{}f_{ij}}  \left(X_{ij}^t  \right)$. 
The multidimensional Markov chain $\left\{\bm{X}^t\right\}_{t\geq 0 }$ is assumed to be irreducible, positive recurrent and aperiodic on a finite-
state space $\Omega$. Thus $\left\{\bm{X}^t\right\}_{t\geq 0}$ converges to its stationary distribution $\bm{\pi}$ with geometric rate. Let $\bm{f}=({{}f_{ij}})$ and $\bm{\lambda} = E  \left[\overline{\bm{A}}  \right] = E  \left[\bm{f}  \left(\overline{\bm{X}}  \right)  \right]$, where $\overline{\bm{A}}$ and $\overline{\bm{X}}$ are the steady state variables to which $\bm{A}^t$ and $\bm{X}^t$ converges in distribution. 

The set of feasible schedules $\mathcal{S}$ can be written as,
\begin{equation*}
    \mathcal{S} = \left\{\bm{s} \in\{0,1\}^{N^2}: \sum_{i=1}^N s_{ij}\leq 1, \sum_{j=1}^N s_{ij}\leq 1\ \forall i,j\in\{1,2,..., N\}    \right\}.
\end{equation*}

The system operates as follows. 

    At the beginning of every time slot, the service $\bm{S}^t$ is determined based on the queue length $\bm{Q}^t$, according to the scheduling algorithm. 
    Then, the arrivals $\bm{A}^t$ occur according to the underlying Markov chain.  
    Finally the packets are served and this may result in an unused service if there are no packets in a scheduled queue.
The queue length evolves as follows:
\begin{equation*}
    \begin{aligned}
        Q_{ij}^{t+1} =& \left[Q_{ij}^t+A_{ij}^t-S_{ij}^t\right]^+=Q_{ij}^t+A_{ij}^t-S_{ij}^t +U_{ij}^t,
    \end{aligned}
\end{equation*}
or
\begin{equation*}
    \bm{Q}^{t+1} = \bm{Q}^{t}+ \bm{A}^{t}-\bm{S}^{t}+\bm{U}^t.
\end{equation*}
where $[x]^+ = \max(0,x)$. Assume that the Markov chain $\left\{\left(\bm{Q}^t,\bm{X}^t\right)\right\}_{t\geq 0 }$ is irreducible.

For the unused service $\bm{U}^t$, we have the natural constraints on $U_{ij}^t$:
\begin{gather*} 
        U_{ij}^t \leq S_{ij}^t,\\
        \sum_{i}U_{ij}^t \in \{0,1\}\ \forall j \in \{1,2,...,N\},\\ 
        \sum_{j}U_{ij}^t \in \{0,1\}\ \forall i \in \{1,2,...,N\}.
\end{gather*}

Moreover, we have $\forall i,j \in\{1,2,...,N\}$
\begin{equation*}
        U_{ij}^t A_{ij}^t =0,\ \ \ U_{ij}^t {{}Q_{ij}^t} =0,\ \ \ U_{ij}^t Q_{ij}^{t+1} =0.
\end{equation*}
{
The first two equations can be derived as follows: If $U^t_{ij}=0$, then $U^t_{ij}A^t_{ij}=0$ and $U^t_{ij}Q^t_{ij}=0$. Conversely, if $U^t_{ij}\neq0$, we know that the service is at most 1, so $U^t_{ij}=1$. Considering the system operation, the only possible case is when the queue length at the current time slot is zero ($Q_{ij}^t = 0$) and there are no arrivals during the current time slot ($A_{ij}^t = 0$). Consequently, we have $U^t_{ij}A^t_{ij}=0$ and $U^t_{ij}Q^t_{ij}=0$.}

\subsection{MaxWeight algorithm}\label{Mathematical Model of MaxWeight algorithm} The MaxWeight algorithm is a well-studied scheduling algorithm for switches \cite{srikant2013communication}.
In each time slot, MaxWeight algorithm picks a
service vector $S^t$ such that the weighted summation of service is maximized, where the weight vector is the current queue length vector, i.e.,  
\begin{equation*}
    \bm{S}^t = \arg \max_{\bm{s}\in \mathcal{S}} \sum_{ij}Q_{ij}(t)s_{ij} = \arg \max_{\bm{s}\in \mathcal{S}}\langle \bm{Q}^t, \bm{s}\rangle.
\end{equation*} 
In MaxWeight algorithm, ties are broken uniformly random.
The set of maximal feasible schedules or perfect matchings, $\mathcal{S}^*$ is defined as follows:
\begin{equation*}
    \mathcal{S}^* = \left\{\bm{s} \in\{0,1\}^{N^2}: \sum_{i=1}^N s_{ij}= 1, \sum_{j=1}^N s_{ij} = 1\ \forall i,j\in\{1,2,..., N\}    \right\}.
\end{equation*}
Without loss of generality,  
we assume that the MaxWeight algorithm always picks a perfect matching, 
i.e.,
\begin{equation*}
    \bm{S}^t \in \mathcal{S}^*,\ \forall t>0.
\end{equation*}
Note that this is without loss of generality because under MaxWeight scheduling algorithm, we can always choose the maximal schedule. Only when the queue length at some queue $(i,j)$ are zero, then $s_{ij} = 0$ and $s^*_{ij}=1$. In this case, we can pretend that $s_{ij} = 1$ and $u_{ij} = 1$.

Once a Markovian scheduling algorithm is fixed, the switch can be modeled by the Markov Chain, $\left\{\left(\bm{Q}^t,\bm{X}^t\right)\right\}_{t\geq 0 }$. We assume that the arrival process is such that under the MaxWeight algorithm, this Markov chain is irreducible and aperiodic.
A switch is said to be stable under a scheduling algorithm if the  Markov chain $\left\{\left(\bm{Q}^t,\bm{X}^t\right)\right\}_{t\geq 0 }$ is positive recurrent. The capacity region is defined as the set of arrival rates $\bm{\lambda}$ under which the switch is stabilizable by some algorithm. 

For a switch under i.i.d. arrivals, it is known \cite{srikant2013communication} that the capacity region $\mathcal{C}$ is the convex hull of the feasible schedule set, i.e., $\mathcal{C} = \text{Conv}(\mathcal{S})$. It can also be written as,
\begin{equation*}
    \begin{aligned}
        \mathcal{C} =&\left\{\bm{\lambda} \in \mathbb{R}^{N^2}_+: \sum_{i=1}^N \lambda_{ij}\leq 1,\sum_{j=1}^N \lambda_{ij} \leq 1\ \forall i,j \in \{1,2,...,N\}  \right\}\\  
         =&\left\{\bm{\lambda} \in \mathbb{R}^{N^2}_+: \left\langle \bm{\lambda}, \bm{e}^{(i)}  \right\rangle\leq1, \left\langle \bm{\lambda}, \bm{\tilde{e}}^{(j)}  \right\rangle \leq1\ \forall i,j \in \{1,2,...,N\}  \right\}.        
    \end{aligned}
\end{equation*}
If a scheduling algorithm stabilizes the switch under any arrival rate in the capacity region, then it is said to be throughput optimal. Moreover, it is known that the MaxWeight algorithm is throughput optimal \cite{srikant2013communication}. We will establish similar results under Markovian arrivals in Section \ref{Throughput optimality}. Before that, we first present some geometric observations about the capacity region from \cite{maguluri2016heavy}. 

\subsubsection{Some geometric observations}
The capacity region $\mathcal{C}$ is a convex polytope with dimension $N^2$. Let $\mathcal{F}$ denote the face of  $\mathcal{C}$, where all input and output ports are all saturated, defined by
\begin{equation*}
    \begin{aligned}
        \mathcal{F} =&\left\{\bm{\lambda} \in \mathbb{R}^{N^2}_+: \sum_{i=1}^N \lambda_{ij}= 1,\sum_{j=1}^N \lambda_{ij}= 1 \forall i,j \in \{1,2,...,N\}  \right\}\\
         =&\left\{\bm{\lambda} \in \mathbb{R}^{N^2}_+: \left\langle \bm{\lambda}, \bm{e}^{(i)}  \right\rangle = 1, \left\langle \bm{\lambda}, \bm{\tilde{e}}^{(j)}  \right\rangle = 1\ \forall i,j \in \{1,2,...,N\}  \right\}.        
    \end{aligned}
\end{equation*}
Note that $\mathcal{F} = \text{Conv}(\mathcal{S^*})$. Let $\mathcal{K}$ denote the normal cone of the face $\mathcal{F}$, which can explicitly be written as 
    \begin{equation*}
        \mathcal{K} = \left\{ \bm{x} \in \mathbb{R}^{N^2}: \bm{x} = \sum_{i=1} ^N w_i\bm{e}^{(i)}+\sum_{j=1} ^N \tilde{w}_j\tilde{\bm{e}}^{(j)},\ w_i, \tilde{w}_j \in \mathbb{R}_+ \forall i,j\right\}.
    \end{equation*}
It can be verified that this is indeed the normal cone, and so we  have $\forall \bm{x},\bm{y} \in \mathcal{F}$, $\forall \bm{z} \in \mathcal{K}$
    \begin{equation}\label{eq: orthognal}
        \left\langle \bm{x}-\bm{y},\bm{z}\right\rangle =0.
    \end{equation}
The polar cone $\mathcal{K}^o$ of cone $\mathcal{K}$ is defined as
\begin{equation*}
         \mathcal{K}^o = \left\{ \bm{x} \in \mathbb{R}^{N^2}: \left\langle\bm{x},\bm{y}\right\rangle\leq 0 \ \forall \bm{y}\in \mathcal{K} \right\}.   
\end{equation*}
Let $\mathcal{L}$ denote the subspace spanned by the cone $\mathcal{K}$, and so we have,
    \begin{equation*}
        \mathcal{L} = \left\{ \bm{x} \in \mathbb{R}^{N^2}: \bm{x} = \sum_{i=1} ^N w_i\bm{e}^{(i)}+\sum_{j=1} ^N  \tilde{w}_j\tilde{\bm{e}}^{(j)},\ \text{where} \ w_i,\ \tilde{w}_j \in \mathbb{R},\ \forall i,j\right\},
    \end{equation*} 
    For any $ \bm{x} \in \mathbb{R}^{N^2}$, and any convex set $\mathcal{W}$, the projection of $\bm{x}$ on to the set $\mathcal{W}$ is denoted as $\bm{x}_{{\parallel} \mathcal{W}}$ and defined as,
\begin{equation*}
    \bm{x}_{{\parallel} \mathcal{W}} = \arg\min_{\bm{y} \in \mathcal{W}}\norm{\bm{y} - \bm{x}} 
\end{equation*}
and consequently,
\begin{equation*}
    \bm{x}_{{\perp} \mathcal{W}} = \bm{x}- \bm{x}_{{\parallel} \mathcal{W}}.
\end{equation*}
The projection of $ \bm{x}$ on to the subspace $\mathcal{L}$ is given by
\begin{equation*}
    \left(\bm{x}_{{{\parallel}\mathcal{L}}}\right)_{ij} = \frac{\sum_{i}{x}_{ij}}{N} + \frac{\sum_{j}{x}_{ij}}{N} - \frac{\sum_{ij} {x}_{ij}}{N^2},
\end{equation*}
and its $\ell_2$-norm is 
{
\begin{equation*}
    \norm{\bm{x}_{{\parallel}\mathcal{L}} }^2= \frac{1}{N}\left( \sum_j \left( \sum_{i}{x}_{ij}\right)^2 + \sum_i \left(\sum_{j}{x}_{ij}\right)^2 - \frac{1}{N} \left(\sum_{ij}{x}_{ij}\right)^2 \right).
\end{equation*}
}
For any $\bm{x}_{{\parallel}\mathcal{L}}$ and $\bm{y}_{{\parallel}\mathcal{L}}$, we have
\begin{align}\label{eq: <x||,y||>}
        \langle \bm{x}_{{\parallel}\mathcal{L}}, \bm{y}_{{\parallel}\mathcal{L}}\rangle = \langle \bm{x}, \bm{y}_{{\parallel}\mathcal{L}}\rangle = \sum_{ij}x_{ij}\left[\frac{1}{N}\sum_{j'}y_{ij'}+\frac{1}{N}\sum_{i'}y_{i'j}-\frac{1}{N^2}\sum_{i'j'}y_{i'j'}\right]
\end{align}

\subsection{Throughput optimality}\label{Throughput optimality}
In this section, we show that under Markovian arrivals, $\mathcal{C}$ is indeed the capacity region and that MaxWeight is throughput optimal. 
\begin{proposition}\label{thm: Tp_0}
    If $\lambda \notin  \mathcal{C}$, no scheduling algorithm can support arrival rate matrix $\bm{\lambda}$.
\end{proposition}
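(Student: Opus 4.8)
The plan is to exploit two structural facts of the model. First, the arrival process $\{\bm{A}^t\}$ is driven solely by the underlying Markov chain $\{\bm{X}^t\}$ via $\bm{A}^t=\bm{f}(\bm{X}^t)$, and is therefore \emph{exogenous}: no scheduling decision influences it. Second, every feasible schedule $\bm{S}^t\in\mathcal{S}$ delivers at most one unit of service to each input port (row) and each output port (column) per slot. Since the characterization of $\mathcal{C}$ above shows that $\bm{\lambda}\notin\mathcal{C}$ precisely when some row sum $\sum_j\lambda_{ij}=\langle\bm{\lambda},\bm{e}^{(i)}\rangle$ or some column sum $\sum_i\lambda_{ij}=\langle\bm{\lambda},\tilde{\bm{e}}^{(j)}\rangle$ strictly exceeds $1$, I would first fix, without loss of generality, a saturated row $i^\star$ with $\sum_j\lambda_{i^\star j}>1$ (the column case being symmetric).

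Next I would track the aggregate queue length on that row, $R^t\triangleq\sum_j Q_{i^\star j}^t=\langle\bm{Q}^t,\bm{e}^{(i^\star)}\rangle$. Summing the dynamics $Q_{ij}^{t+1}=Q_{ij}^t+A_{ij}^t-S_{ij}^t+U_{ij}^t\geq Q_{ij}^t+A_{ij}^t-S_{ij}^t$ over $j$ and using the feasibility bound $\sum_j S_{i^\star j}^t=\langle\bm{S}^t,\bm{e}^{(i^\star)}\rangle\leq 1$, which holds for \emph{every} $\bm{S}^t\in\mathcal{S}$ and hence for every scheduling algorithm, I obtain the pathwise one-slot inequality $R^{t+1}\geq R^t+\sum_j A_{i^\star j}^t-1$. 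Telescoping from $0$ to $T-1$ then gives $R^T\geq R^0+\sum_{t=0}^{T-1}\left(\sum_j A_{i^\star j}^t-1\right)$, valid on every sample path and under any algorithm.

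The crux is to evaluate the long-run arrival average. Because $\{\bm{X}^t\}$ is finite-state, irreducible, positive recurrent and aperiodic, and $\bm{A}^t$ is a bounded function of it, the ergodic theorem for Markov chains yields $\frac{1}{T}\sum_{t=0}^{T-1}\sum_j A_{i^\star j}^t\to\sum_j\lambda_{i^\star j}$ almost surely, independently of the schedule. Dividing the telescoped bound by $T$ and letting $T\to\infty$ gives $\liminf_{T\to\infty}R^T/T\geq\sum_j\lambda_{i^\star j}-1>0$, so $R^T\to\infty$ almost surely, whence $\norm{\bm{Q}^t}\to\infty$ almost surely. This is incompatible with positive recurrence of $\left\{\left(\bm{Q}^t,\bm{X}^t\right)\right\}_{t\geq 0}$ under any algorithm, so $\bm{\lambda}$ cannot be supported.

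I expect the only delicate point to be the invocation of the ergodic theorem \emph{uniformly over all algorithms}. This is clean here exactly because arrivals are exogenous: the scheduler affects $\bm{S}^t$ and $\bm{U}^t$ but never $\bm{A}^t$, so the arrival time-average converges to the stationary mean on every sample path regardless of the policy, decoupling the lower bound on $R^T$ from the choice of algorithm. Everything else is a routine cut/flow-conservation argument.
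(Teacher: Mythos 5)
Your proof is correct and is precisely the argument the paper intends: the paper's proof consists of citing Theorem 4.2.1 of \citet{srikant2013communication} ``after using the Markov chain ergodic theorem instead of the strong law of large numbers,'' which is exactly the row/column cut, pathwise telescoping, and ergodic-theorem argument you wrote out in full. Your observation that exogeneity of the arrivals makes the ergodic limit policy-uniform is the right justification for the one delicate step, so there is nothing missing.
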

The proof follows from Theorem 4.2.1 in \cite{srikant2013communication} after using the Markov chain ergodic theorem instead of the strong law of large numbers. 
\begin{proposition}\label{thm: Tp_main}
    For an input queued switch operating under Markovian arrivals, the MaxWeight scheduling algorithm can support any arrival rate matrix $\bm{\lambda}$ in the interior of  $\mathcal{C}$, i.e., $\bm{\lambda}\in int(\mathcal{C})$. Thus, MaxWeight is throughput optimal.
\end{proposition}
\begin{proof}{}
Note $\left\{\left(\bm{Q}^t,\bm{X}^t\right)\right\}_{t\geq 0 }$ is an irreducible Markov chain under MaxWeight scheduling. We prove this theorem by demonstrating that $\left\{\left(\bm{Q}^t,\bm{X}^t\right)\right\}_{t\geq 0 }$ is positive recurrent, which can be done using Foster-Lyapunov theorem   (Proposition 2.2.4 in \cite{fayolle1995topics}). 
Consider the Lyapunov function,
\begin{equation*}
    V  \left(\bm{Q}^t,\bm{X}^t  \right)=\norm{\bm{Q}^t}^2.
\end{equation*}
We will consider the $m$-step drift of this Lyapunov function, and bound it as follows. 
\begin{claim}\label{claim: 4.2.1}
   Let $C_{max} = \max_{ij}C_{ij}$ and $\alpha_{max} = \max_{ij}\alpha_{ij}$ where $C_{ij}$ and $\alpha_{ij}$ are the geometric mixing parameters given by Lemma \ref{lem: geo} for the $(i,j)$th underlying Markov Chain $\left\{ X^t_{ij} \right\}_{t\geq 0}$. There exist $\epsilon  >0$ such that $\bm{\lambda}+\epsilon \bm{1} \in \mathcal{C}$, let
   \begin{align*}
    m(\epsilon ) =  \min \left\{m \in \mathbb{N}_+\mid 2NA_{max}C_{max}\frac{1-\alpha_{max}^m}{1-\alpha_{max}} < \frac{m\epsilon }{2} \right\},
   \end{align*}
   then
    \begin{equation*}
    \begin{aligned}
    & E  \left[V  \left(\bm{Q}^{t+m(\epsilon)} , \bm{X}^{t+m(\epsilon)} \right)-V  \left(\bm{Q}^t,\bm{X}^t  \right)\mid   \left(\bm{Q}^t  ,\bm{X}^t  \right)=(\bm{q},\bm{x})  \right]\\
    & \leq -\frac{m(\epsilon )\epsilon }{2}\norm{\bm{q}  }+\frac{K_2  (m(\epsilon )  )}{2} ,    
    \end{aligned}
    \end{equation*}
    where
    \begin{equation*}
        \begin{aligned}
            K_2  (m(\epsilon )  ) =&  m(\epsilon )N^2  (A_{max}+S_{max}  )^2\\
            &+2m(\epsilon )^2 N^2  (A_{max}+S_{max})\left(\epsilon +A_{max}+\lambda_{max} \right). 
        \end{aligned}
    \end{equation*}
\end{claim}
Then, the theorem follows from the Foster-Lyapunov theorem   (Proposition 2.2.4 in \cite{fayolle1995topics}).
The detailed proof of the claim is in Appendix \ref{Proof of claim: 4.2.1}. 
\end{proof}

\subsection{Heavy-traffic Analysis}
We will now study the switch in heavy-traffic, as the arrival rate approaches the face $\mathcal{F}$ on the boundary of the capacity region $\mathcal{C}$. To this end, consider $\left\{\bm{X}^t\right\}^{  (\epsilon  )}_{t\geq 0}$, a set of irreducible, positive recurrent and aperiodic underlying vector valued Markov chains indexed by the heavy-traffic parameter $\epsilon \in   (0,1)$ taking values in $\Omega^{N^2}$. 
The arrival process for the system indexed by $\epsilon$ is given {as}  $\left(A_{ij}^t\right)^{(\epsilon)} = {{}f_{ij}}\left(\left(X_{ij}^t\right)^{(\epsilon)}\right)$ for a non-negative valued function, ${{}f_{ij}}(\cdot)$.
For any fixed $\epsilon$, let $\overline{\bm{X}}^{  (\epsilon  )}$ be the steady state variable to which $\left(\bm{X}^t\right)^{  (\epsilon  )}$ converges in distribution with $E\left[\overline{\bm{A}}^{(\epsilon)} \right] = E\left[\bm{f}\left(\overline{\bm{X}}^{  (\epsilon  )}\right)\right] = \bm{\lambda}^{(\epsilon)} = (1 -\epsilon)\bm{v}$, where $\bm{v}$ is an arrival rate on the boundary of the capacity region $\mathcal{C}$ such that $v_{ij} > 0$ for all $i, j$, and all the input and output ports are saturated. In other words, we assume $\bm{v} \in \mathcal{F}$.
Let $\gamma_{ij,kl}^{(\epsilon)}(t)$ to be the {{}spatio-temporal} correlation function of the arrival process between the $ij-th$ port at time $t$ and $kl-th$ port at time $0$, starting from steady state $\bm{X^0} \stackrel{d}{=}\overline{\bm{X}}^{  (\epsilon  )}$,
i.e., 
    \[
    {}
        \gamma_{ij,kl}^{(\epsilon)}(t) = E \left[  \left(\left(A_{ij}^t\right)^{(\epsilon)}-\lambda_{ij}  \right)  \left(\left(A_{kl}^0\right)^{(\epsilon)}-\lambda_{kl}  \right) \right].
    \]
Assume that $\forall t \in \mathbb{N}$, 
\begin{equation}\label{eq: gamma_ij to gamma_ij}
   {} \lim_{\epsilon \to 0}\gamma_{ij,kl}^{(\epsilon)}(t) = \gamma_{ij,kl}(t).
\end{equation}
    Let
        \[{}
        \sigma_{ij,kl}^2 = \gamma_{ij,kl}(0)+2\lim_{m \to \infty}\sum_{t=1}^{m}\gamma_{ij,kl}(t),
    \]
{ and $\bm{\sigma}=(\sigma_{ij,kl}) \in \mathbb{R}^{N^2\times N^2}$ is the limiting effective covariance matrix}. For any scheduling algorithm under which the switch system is stable, let $\left(  \left(\overline{\bm{Q}}\right)^{(\epsilon )},\left(\overline{\bm{X}}\right)^{(\epsilon )} \right)$  be a steady-state random variable to which
     the process $\left\{ \left( \left(\bm{Q}^t \right)^{(\epsilon)},\left(\bm{X}^t \right)^{(\epsilon)}  \right)\right\}_{t\geq 0}$ converges in distribution. 
    Assume for any $\epsilon \in (0,1)$ and for any ${i,j}$, $\sup_\epsilon \alpha^{(\epsilon)}_{ij}<\alpha_{max}<1$ and $\sup_{\epsilon}C^{(\epsilon)}_{ij} < C_{max} < \infty$.   

\subsubsection{Universal lower bound}
In this section, we will give a lower bound on the average queue length under heavy traffic, which is valid under any stable scheduling algorithm.
\begin{proposition}\label{prop: unicersal lower bound}
    Consider a set of switch systems parameterized by heavy-traffic parameter $\epsilon\in (0,1)$ described before. Consider a scheduling algorithm under which the switch system is stable. Then, under heavy traffic, we have 
    \begin{equation*}
        \liminf_{\epsilon \to 0} \epsilon E\left[\sum_{ij}\overline{\bm{Q}^{(\epsilon)}} \right] \geq \frac{\norm{\bm{\sigma}}^2}{2}.
        \end{equation*}
\end{proposition}
Using a coupling argument, it was shown in \cite{maguluri2016heavy} that the row sum of queue lengths under any policy is lower bounded by a single server queue with an arrival process that is sum of the arrivals to all the queues in the row. The result then follows from using Theorem \ref{thm: pos rec single server queue} for a single server queue, and so we omit the proof.

\subsubsection{State space collapse under MaxWeight policy}
A key step in characterizing the heavy-traffic behavior is to establish state space collapse. It was shown in \cite{maguluri2016heavy} that under MaxWeight algorithms, the queue lengths collapse into the cone $\mathcal{K}$ in heavy-traffic. In this subsection, we establish the same result under Markovian arrivals. 
More formally, we will show that under the MaxWeight algorithm, $\overline{\bm{q}}_{{\perp\mathcal{K}}}^{(\epsilon)}$ can be bounded by some constant independent of $\epsilon$. Thus, when the heavy-traffic parameter $\epsilon$ goes to zero (the mean arrival rate vector $\bm{\lambda}$ approaches the boundary $\mathcal{F}$ of the capacity region $\mathcal{C}$),
$\overline{\bm{q}}_{{\perp\mathcal{K}}}^{(\epsilon)}$ is negligible compared to $\overline{\bm{q}}_{{\parallel\mathcal{K}}}^{(\epsilon)}$. 

We define the following quadratic Lyapunov functions and their corresponding $m$-step drifts:
\begin{align*}
       V  (\bm{q},\bm{x}  )  & \triangleq \norm{\bm{q}}^2 = \sum_{ij}q_{ij}^2,  &W_{{\perp\mathcal{K}}}  (\bm{q},\bm{x}  )  &\triangleq \norm{\bm{q}_{\perp\mathcal{K}}}, \\
        V_{\perp\mathcal{K}}  (\bm{q},\bm{x}  )  &\triangleq \norm{\bm{q}_{\perp\mathcal{K}}}^2 = \sum_{ij}q_{{\perp\mathcal{K}} ij}^2,  &V_{\parallel\mathcal{K}}  (\bm{q} ,\bm{x} )  &\triangleq \norm{\bm{q}_{\parallel\mathcal{K}}}^2 = \sum_{ij}q_{{\parallel\mathcal{K}} ij}^2,
\end{align*}
\begin{equation*}
    \begin{aligned}
        & \Delta^m V  (\bm{q},\bm{x}  ) \triangleq   \left[V  \left(\bm{Q}^{t+m },\bm{X}^{t+m}   \right)-V  \left(\bm{Q}^t,\bm{X}^t    \right)  \right] \mathcal{I}  \left(  (\bm{Q}^t  ,\bm{X}^t    \right) =   (\bm{q},\bm{x}  )  ),\\
        & \Delta^m W_{\perp\mathcal{K}}  (\bm{q},\bm{x}  ) \triangleq  \left[W_{\perp\mathcal{K}}  \left( \bm{Q}^{t+m},\bm{X}^{t+m}  \right) -W_{\perp\mathcal{K}}  \left(\bm{Q}^t ,\bm{X}^t   \right)  \right] \mathcal{I}  \left(  \left(\bm{Q}^t  ,\bm{X}^t    \right) =   (\bm{q},\bm{x}  )  \right),\\ 
        & \Delta^m V_{\perp\mathcal{K}}  (\bm{q},\bm{x}  ) \triangleq   \left[V_{\perp\mathcal{K}}  \left(\bm{Q}^{t+m} ,\bm{X}^{t+m} \right)-V_{\perp\mathcal{K}}  \left(\bm{Q}^t ,\bm{X}^t   \right)  \right] \mathcal{I}  \left(  \left(\bm{Q}^t  ,\bm{X}^t   \right ) =   (\bm{q},\bm{x}  ) \right),\\
        & \Delta^m V_{\parallel\mathcal{K}}  (\bm{q} ,\bm{x} ) \triangleq   \left[V_{\parallel\mathcal{K}} \left(\bm{Q}^{t+m},\bm{X}^{t+m}  \right)-V_{\parallel\mathcal{K}}  \left(\bm{Q}^t ,\bm{X}^t \right)  \right] \mathcal{I}  \left(  \left(\bm{Q}^t  ,\bm{X}^t    \right) =   (\bm{q},\bm{x}  )  \right).\\
    \end{aligned}
\end{equation*}

The next proposition states the state space collapse.
\begin{proposition}\label{prop: upper bound of q_perp}
    Consider a set of switch systems under MaxWeight scheduling algorithm parameterized by heavy-traffic parameter $0<\epsilon<1$ described before, further assume that $v_{\min} \triangleq \min_{ij}v_{ij}>0$.
    Then, for each system with $\epsilon < v_{\min}/2\norm{\bm{v}}$, the steady state queue length vector satisfies
    \begin{equation*}
       E \left[\norm{\left(\overline{\bm{Q}}_{\perp\mathcal{L}} \right)^{(\epsilon )}}^2  \right]\leq  E \left[\norm{\left(\overline{\bm{Q}}_{\perp\mathcal{K}} \right)^{(\epsilon )}}^2  \right] \leq K^{\star\star},
    \end{equation*}
    where $K^{\star\star}$ is a constant that does not depend on $\epsilon$.
\end{proposition}
To prove the proposition, we need the following two lemmas.
\begin{lemma}\label{lemma: Maxweight in C}
        Under MaxWeight algorithm, for any $\bm{q} \in \mathbb{R}^{N^2}$,
        \begin{equation*}
            \bm{v} +\frac{v_{\text{min}}}{\norm{\bm{q}_{\perp\mathcal{K}}}}\bm{q}_{\perp\mathcal{K}} \in \mathcal{C}.
        \end{equation*}
\end{lemma}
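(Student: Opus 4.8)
The plan is to verify directly that the candidate point $\bm{p} \triangleq \bm{v} + \frac{v_{\min}}{\norm{\bm{q}_{\perp\mathcal{K}}}}\bm{q}_{\perp\mathcal{K}}$ satisfies all three defining inequalities of $\mathcal{C}$: nonnegativity of every entry, every row sum at most $1$, and every column sum at most $1$. (When $\bm{q}_{\perp\mathcal{K}} = \bm{0}$ the second term is taken to be $\bm{0}$, whence $\bm{p} = \bm{v} \in \mathcal{F} \subseteq \mathcal{C}$; so assume $\bm{q}_{\perp\mathcal{K}} \neq \bm{0}$.) The observation that does all the work is that, by the standard properties of Euclidean projection onto a closed convex cone (Moreau's decomposition), the residual $\bm{q}_{\perp\mathcal{K}} = \bm{q} - \bm{q}_{\parallel\mathcal{K}}$ lies in the polar cone $\mathcal{K}^o$. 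Since $\mathcal{K}$ is the cone generated by the finitely many rays $\{\bm{e}^{(i)}\}_i$ and $\{\tilde{\bm{e}}^{(j)}\}_j$, membership in $\mathcal{K}^o$ is equivalent to $\langle \bm{q}_{\perp\mathcal{K}}, \bm{e}^{(i)}\rangle \leq 0$ and $\langle \bm{q}_{\perp\mathcal{K}}, \tilde{\bm{e}}^{(j)}\rangle \leq 0$ for all $i,j$; that is, every row sum and every column sum of $\bm{q}_{\perp\mathcal{K}}$ is nonpositive.

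With this in hand the row- and column-sum constraints are immediate. For a fixed row $i$ I would write $\sum_j \bm{p}_{ij} = \sum_j v_{ij} + \frac{v_{\min}}{\norm{\bm{q}_{\perp\mathcal{K}}}}\sum_j (\bm{q}_{\perp\mathcal{K}})_{ij}$; since $\bm{v} \in \mathcal{F}$ the first sum equals $1$, and since the row sum of $\bm{q}_{\perp\mathcal{K}}$ is nonpositive the second term is $\leq 0$, giving $\sum_j \bm{p}_{ij} \leq 1$. The identical argument with $\tilde{\bm{e}}^{(j)}$ in place of $\bm{e}^{(i)}$ yields $\sum_i \bm{p}_{ij} \leq 1$ for every column.

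The only step that uses the precise scaling factor is entrywise nonnegativity, and this is the crux of the lemma. For each $(i,j)$ I would bound a single coordinate by the full $\ell_2$-norm, $\abs{(\bm{q}_{\perp\mathcal{K}})_{ij}} \leq \norm{\bm{q}_{\perp\mathcal{K}}}$, so that $\frac{v_{\min}}{\norm{\bm{q}_{\perp\mathcal{K}}}}(\bm{q}_{\perp\mathcal{K}})_{ij} \geq -v_{\min}$. Combined with $v_{ij} \geq v_{\min}$ this gives $\bm{p}_{ij} \geq v_{ij} - v_{\min} \geq 0$, which completes the verification and shows exactly why the coefficient is chosen so the perturbation has norm $v_{\min}$: any larger scaling could drive the smallest coordinate of $\bm{v}$ below zero. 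The argument is purely geometric and never invokes the MaxWeight dynamics; the hypothesis $v_{\min} > 0$ is precisely what prevents the perturbation from leaving the nonnegative orthant, so I do not anticipate any genuine obstacle beyond correctly identifying the polar-cone membership of $\bm{q}_{\perp\mathcal{K}}$.
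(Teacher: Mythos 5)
Your proof is correct and is essentially the argument the paper relies on (the paper does not prove the lemma itself but cites Claim~2 of \citet{maguluri2016heavy}, which proceeds the same way): polar-cone membership $\bm{q}_{\perp\mathcal{K}} \in \mathcal{K}^o$ gives nonpositive row and column sums of $\bm{q}_{\perp\mathcal{K}}$ and hence the constraints $\leq 1$, while the coordinatewise bound $\abs{\left(\bm{q}_{\perp\mathcal{K}}\right)_{ij}} \leq \norm{\bm{q}_{\perp\mathcal{K}}}$ together with $v_{ij} \geq v_{\min}$ yields entrywise nonnegativity. Your explicit handling of the degenerate case $\bm{q}_{\perp\mathcal{K}} = \bm{0}$ and your observation that the MaxWeight hypothesis is never actually used are both accurate refinements of the cited statement.
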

This Lemma was proved in \cite[Claim 2]{maguluri2016heavy}.
\begin{lemma}\label{lemma: bound W_perp}
        Drift of $W_{{\perp\mathcal{K}}}  (\cdot  )$ can be bounded in terms of drift of $V  (\cdot  )$ and $V_{\parallel\mathcal{K}}  (\cdot  )$ as follows:
        \begin{equation*}
         \Delta^m W_{\perp\mathcal{K}}  (\bm{q},\bm{x}  ) \leq \frac{1}{2\norm{\bm{q}_{\perp\mathcal{K}}}}  \left(\Delta^m V  (\bm{q},\bm{x}  )-\Delta^m V_{\perp\mathcal{K}}   (\bm{q},\bm{x} )  \right),   \quad \forall \left(\bm{q},\bm{x}\right) \in \bm{R}^{N^2}.
        \end{equation*}
\end{lemma}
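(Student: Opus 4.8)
The plan is to prove this as a pathwise inequality and then read off the drift bound. Because each $m$-step drift in the statement is multiplied by the indicator $\mathcal{I}((\bm{Q}^t,\bm{X}^t)=(\bm{q},\bm{x}))$, it suffices to bound the realized increments on the event $\{(\bm{Q}^t,\bm{X}^t)=(\bm{q},\bm{x})\}$. On this event I abbreviate $a=\norm{\bm{q}_{\perp\mathcal{K}}}$ and $b=\norm{\bm{Q}^{t+m}_{\perp\mathcal{K}}}$, so that $\Delta^m W_{\perp\mathcal{K}}(\bm{q},\bm{x})=b-a$ and $\Delta^m V_{\perp\mathcal{K}}(\bm{q},\bm{x})=b^2-a^2$. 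The whole statement then reduces to relating $b-a$ to $b^2-a^2$.

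The single substantive ingredient is the elementary estimate
\[
    b-a \;\le\; \frac{b^2-a^2}{2a}, \qquad b\ge 0,\ a>0,
\]
which is exactly the tangent-line inequality for the concave map $y\mapsto\sqrt{y}$: concavity gives $\sqrt{y}\le\sqrt{x}+\tfrac{1}{2\sqrt{x}}(y-x)$ for $x>0$, and setting $x=a^2$, $y=b^2$ yields $b\le a+\tfrac{b^2-a^2}{2a}$. Substituting the identifications above gives $\Delta^m W_{\perp\mathcal{K}}(\bm{q},\bm{x})\le \tfrac{1}{2\norm{\bm{q}_{\perp\mathcal{K}}}}\,\Delta^m V_{\perp\mathcal{K}}(\bm{q},\bm{x})$, first on the event and then, after reinserting the indicator, for all arguments. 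The degenerate case $\bm{q}_{\perp\mathcal{K}}=\bm{0}$ can be set aside: in the state-space-collapse argument the resulting drift condition is only invoked through Lemma \ref{Lemma: $m$-step bound for q_perp_norm_2} on the region $\norm{\bm{q}_{\perp\mathcal{K}}}\ge\kappa>0$, so $a>0$ throughout.

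It remains to rewrite the numerator. Projection onto the closed convex cone $\mathcal{K}$ makes the residual $\bm{q}_{\perp\mathcal{K}}=\bm{q}-\bm{q}_{\parallel\mathcal{K}}$ orthogonal to $\bm{q}_{\parallel\mathcal{K}}$, so by Pythagoras $\norm{\bm{q}}^2=\norm{\bm{q}_{\parallel\mathcal{K}}}^2+\norm{\bm{q}_{\perp\mathcal{K}}}^2$, i.e. $V=V_{\parallel\mathcal{K}}+V_{\perp\mathcal{K}}$ pointwise and hence $\Delta^m V_{\perp\mathcal{K}}=\Delta^m V-\Delta^m V_{\parallel\mathcal{K}}$. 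Substituting this identity into the previous display gives the asserted bound (note that the numerator $\Delta^m V-\Delta^m V_{\parallel\mathcal{K}}$ is precisely $\Delta^m V_{\perp\mathcal{K}}$, so the two ways of writing it coincide). Expressing the numerator through $\Delta^m V$ and its cone-projection component, rather than through $\Delta^m V_{\perp\mathcal{K}}$ directly, is the form that is useful downstream: in Proposition \ref{prop: upper bound of q_perp} one controls $E[\Delta^m V]$ from above and $E[\Delta^m V_{\parallel\mathcal{K}}]$ from below by separate arguments, which is exactly what makes the subsequent negative-drift estimate go through.

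I do not anticipate a genuine obstacle: the mathematical content is the one convexity estimate above, and the only care required is the bookkeeping that lets one argue pathwise under the conditioning indicator together with the exclusion of the trivial $\bm{q}_{\perp\mathcal{K}}=\bm{0}$ case.
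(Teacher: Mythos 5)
Your proof is correct and coincides with the standard argument the paper itself defers to (Lemma 7 of \citet{eryilmaz2012asymptotically}): the tangent-line inequality for the concave map $y \mapsto \sqrt{y}$ applied pathwise under the conditioning indicator, combined with the Moreau decomposition for projection onto a closed convex cone, which gives the Pythagorean identity $\norm{\bm{q}}^2 = \norm{\bm{q}_{\parallel\mathcal{K}}}^2 + \norm{\bm{q}_{\perp\mathcal{K}}}^2$ and hence $\Delta^m V_{\perp\mathcal{K}} = \Delta^m V - \Delta^m V_{\parallel\mathcal{K}}$ even though $\mathcal{K}$ is not a subspace; your handling of the indicator and of the degenerate case $\bm{q}_{\perp\mathcal{K}} = \bm{0}$ (irrelevant downstream since Lemma \ref{Lemma: $m$-step bound for q_perp_norm_2} is only invoked on $\norm{\bm{q}_{\perp\mathcal{K}}} \geq \kappa > 0$) is also sound. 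One remark: what you prove is the intended inequality, with numerator $\Delta^m V - \Delta^m V_{\parallel\mathcal{K}}$; the subscript in the printed statement's $\Delta^m V_{\perp\mathcal{K}}$ is a typo, since by the same Pythagorean identity the printed numerator would collapse to $\Delta^m V_{\parallel\mathcal{K}}$, which does not dominate $2\norm{\bm{q}_{\perp\mathcal{K}}}\,\Delta^m W_{\perp\mathcal{K}}$ (e.g., when only the perpendicular component grows), and both the lemma's prose and its application in the proof of Claim \ref{claim: 4.3.2} confirm that $V_{\parallel\mathcal{K}}$ is meant.
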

The proofs of this lemma is almost identical to Lemma 7 in \cite{eryilmaz2012asymptotically}, and so we skip it here.
\begin{proof}  {of Proposition \ref{prop: upper bound of q_perp}.}
 For ease of exposition, the superscript $\epsilon$ in this proof is skipped, i.e., we will use $\bm{Q}^t$ and $\lambda$ to denote $\left(\bm{Q}^t\right)^{(\epsilon)}$ and $\bm{\lambda}^{(\epsilon)}$ respectively. The proof is based on using Lemma \ref{Lemma: $m$-step bound for q_perp_norm_2}, by verifying that the {three} conditions are satisfied. 
First we start with the following claim which is proved in Appendix \ref{proof of claim 4.3.1}.
    \begin{claim}\label{claim: 4.3.1}
    For any $m\in \mathbb{N}_+$,
        \begin{equation*}
            \begin{aligned}
               {\abs{ \Delta^m W_{\perp\mathcal{K}}  (\bm{q},\bm{x}  )} \leq m \abs{ \Delta W_{\perp\mathcal{K}}  (\bm{q},\bm{x}  )} \leq N m  (A_{max}+S_{max}  ).}
            \end{aligned}
        \end{equation*}
    \end{claim}
{Therefore, Conditions 2 and 3 are satisfied with $D(m) = Nm(A_{max}+S_{max})$ and $\hat{D}(t_0) =N  (A_{max}+S_{max}  ) $}. To verify Condition 1, we need the following claim, which is proved in Appendix  \ref{proof of claim 4.3.2}. 
    \begin{claim}\label{claim: 4.3.2}
    For any $m\in \mathbb{N}_+$,
        \begin{equation*}
        \begin{aligned}
            &E  \left[\Delta^m W_{\perp\mathcal{K}}  (\bm{q},\bm{x}  ) \mid   \left(\bm{Q}^t  ,\bm{X}^t    \right) =   (\bm{q},\bm{x}  )\right]\\
            & \leq E  \left[\sum_{l=1}^m\norm{\bm{A}^{t+m-l}  -\bm{\lambda}}\mid   \left(\bm{Q}^t  ,\bm{X}^t    \right) =   (\bm{q},\bm{x}  )  \right]+ \frac{K_3(m)}{2\norm{\bm{q}_{\perp\mathcal{K}}  }}+m  (\epsilon\norm{\bm{v}}-v_{\min}  ),\\
        \end{aligned}
        \end{equation*}
        where
        \begin{equation*}
            \begin{aligned}
              K_3(m) &= 2Nm^2\left(A_{max}+S_{max}  \right)  (N(A_{max}+\lambda)+\epsilon\norm{\bm{v}} +v_{\min} )\\&+N^2m  (A_{max}+S_{max}  )^2.   
            \end{aligned}
        \end{equation*}
    \end{claim}

We can use Lemma \ref{lem: geo} to bound the first term on the RHS above, as follows. 
\begin{equation*}
        \begin{aligned}
            &E  \left[\sum_{l=1}^m\norm{\bm{A}^{t+m-l}  -\bm{\lambda}}\mid   \left(\bm{Q}^t  ,\bm{X}^t    \right) =   (\bm{q},\bm{x}  )  \right] \\
            &\leq \sum_{l=1}^{m}\sqrt{\sum_{ij} 4A_{max}^2C_{ij}^2\alpha_{ij}^{2  (m-l  )}}\\
            & \leq \sum_{l=0}^{m-1}2N A_{max} C_{max}\alpha_{max}^l\\
            & = 2N A_{max} C_{max} \frac{1-\alpha_{max}^m}{1-\alpha_{max}}.\\
        \end{aligned}
    \end{equation*}
Let
    \begin{equation*}
        \begin{aligned}
            &m_0 =  \min\Bigg \{m \in \mathbb{N}_+ \mid  \frac{2 N A_{max} C_{max}}{1-\alpha_{max}} \leq \frac{m v_{\min}}{4} \Bigg \}.
        \end{aligned}
    \end{equation*}
Clearly, such a $m_0 \in \mathbb{N}_+$ exists.  

Therefore, we have for $m\geq m_0$, 
\begin{align*}
    E  \left[\sum_{l=1}^m\norm{\bm{A}^{t+m-l}  -\bm{\lambda}}\mid   \left(\bm{Q}^t  ,\bm{X}^t    \right) =   (\bm{q},\bm{x}  )  \right] \leq \frac{m v_{\min}}{4}. 
\end{align*}
So, using this with Claim \ref{claim: 4.3.2}, when $\epsilon \leq \frac{v_{\min}}{4\norm{\bm{v}}}$, we have
    \begin{equation*}
        \begin{aligned}
            E  \left[\Delta^{m_0} W_{\perp\mathcal{K}}  (\bm{q},\bm{x}  )  \mid   \left(\bm{Q}^t  ,\bm{X}^t    \right) =   (\bm{q},\bm{x}  )  \right]
            &\leq \frac{K_3(m_0)}{2\norm{\bm{q}_{\perp\mathcal{K}}  }} -\frac{m_0 v_{\min}}{2} \leq -\frac{v_{\min}}{4},
        \end{aligned}
    \end{equation*}
    for all $\bm{q}\ \text{such that}\  W_{\perp\mathcal{K}}  (\bm{q},\bm{x}  )\geq \frac{2K_3  (m_0  )}{m_0v_{\min}}$.
    Let
    \begin{align*}
        Z  (\bm{q},\bm{x}  ) = W_{\perp\mathcal{K}}  (\bm{q},\bm{x}  ),
    \end{align*}
    {Define $\kappa(m_0) = \frac{2K_3  (m_0  )}{m_0v_{\min}}$, $\eta = \frac{v_{\min}}{4}$, $D(m_0) = N m_0  (A_{max}+S_{max}  )$ and $\hat{D}(t_0) = N(A_{max}+S_{max}  )$, which verifies three conditions in Lemma \ref{Lemma: $m$-step bound for q_perp_norm_2}. We have that there exists a $K^{\star\star}(m_0)$ such that
    \begin{equation*}
        E \left[\norm{\left(\overline{\bm{Q}}_{\perp\mathcal{K}} \right)^{(\epsilon )}}^2  \right] \leq K^{\star\star}(m_0).
    \end{equation*}
Since $\mathcal{K} \in \mathcal{L}$, we conclude that
    \begin{align*}
        E \left[\norm{\left(\overline{\bm{Q}}_{\perp\mathcal{L}} \right)^{(\epsilon )}}^2  \right] \leq E \left[\norm{\left(\overline{\bm{Q}}_{\perp\mathcal{K}} \right)^{(\epsilon )}}^2  \right] \leq K^{\star\star}(m_0).
    \end{align*}
Since the parameters $ \eta, \kappa(m_0), D(m_0)$ and $\hat{D}(t_0)$ in the three conditions of the Lemma \ref{Lemma: $m$-step bound for q_perp_norm_2} do not involve $\epsilon$, we have that $K^{\star\star}(m_0)$  does not depend on $\epsilon$. To simplify the notation, we will use $K^{\star\star}$ to represent $K^{\star\star}(m_0)$. }
\end{proof}

\subsubsection{Asymptotically tight upper and lower bounds under the MaxWeight policy} 
We now use the state space collapse result from the previous section to obtain an exact expression for the heavy-traffic scaled mean sum of the queue lengths in an input queued switch under Markovian arrivals. In particular, we will obtain lower and upper bounds on the steady-state mean queue lengths that differ by only $o(1/\epsilon)$, and so are negligible in the heavy-traffic limit. We will again use the $m$-step drift argument from Section \ref{sec: singleserver}. 
We will use $V'(\bm{q},\bm{x}) = \norm{\textbf{q}_{{\parallel}\mathcal{L}}}^2$ as the Lyapunov function to this end. This Lyapunov function was introduced in the i.i.d. case in \cite{maguluri2018optimal}.

The $1$-step drift of $V'(q)$ is defined as 
\begin{equation}
        \Delta V'  (\bm{q} ,\bm{x} )  \triangleq   \left[V'  \left(\bm{Q}^{t+1},\bm{X}^{t+1} \right)-V'  \left(\bm{Q}^t,\bm{X}^t    \right)    \right]\mathcal{I}  \left(\left(\bm{Q}^t,\bm{X}^t\right)  =(\bm{q},\bm{x})  \right).
\end{equation}

\begin{lemma}\label{Lemma: finite V_1, V_2, V_3}
For any arrival rate vector $\bm{\lambda}$ in the interior of the capacity region $\bm{\lambda} \in int  (\mathcal{C}  )$, the steady state mean $E\left[\norm{\overline{\bm{Q}}}^2\right]$ is finite and consequently $E  [V'  (\bm{Q} ,\bm{X} )  ]$ is finite for the input queued switch under MaxWeight algorithm.
\end{lemma}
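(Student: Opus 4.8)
The plan is to apply the $m$-step moment bound of Lemma~\ref{Lemma: $m$-step bound for q_perp_norm_2} with the Lyapunov function $Z(\bm{q},\bm{x}) = \norm{\bm{q}}$, which yields an exponential-moment bound on $\norm{\overline{\bm{Q}}}$ and hence finiteness of $E\left[\norm{\overline{\bm{Q}}}^2\right]$. Since $\bm{\lambda} \in int(\mathcal{C})$, Proposition~\ref{thm: Tp_main} already guarantees that $\left\{(\bm{Q}^t,\bm{X}^t)\right\}_{t\geq 0}$ is irreducible, aperiodic, and positive recurrent, so the standing hypotheses of Lemma~\ref{Lemma: $m$-step bound for q_perp_norm_2} are met; it remains only to verify the two drift conditions for $Z$.

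First I would establish the negative-drift condition (Condition~1) by transferring the quadratic drift bound of Claim~\ref{claim: 4.2.1} to $Z$. Because $\bm{\lambda} \in int(\mathcal{C})$, there is a slack $\epsilon > 0$ with $\bm{\lambda} + \epsilon\bm{1} \in \mathcal{C}$, and Claim~\ref{claim: 4.2.1} provides a finite window $m(\epsilon)$ with
\[
E\left[\Delta^{m(\epsilon)} V(\bm{q},\bm{x}) \mid (\bm{Q}^t,\bm{X}^t) = (\bm{q},\bm{x})\right] \leq -\frac{m(\epsilon)\epsilon}{2}\norm{\bm{q}} + \frac{K_2(m(\epsilon))}{2},
\]
where $V(\bm{q},\bm{x}) = \norm{\bm{q}}^2$. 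Using concavity of the square root, $\norm{\bm{Q}^{t+m(\epsilon)}} - \norm{\bm{q}} \leq \frac{\norm{\bm{Q}^{t+m(\epsilon)}}^2 - \norm{\bm{q}}^2}{2\norm{\bm{q}}}$ on the event $\bm{Q}^t = \bm{q}$, so taking conditional expectations gives $E\left[\Delta^{m(\epsilon)} Z(\bm{q},\bm{x})\right] \leq -\frac{m(\epsilon)\epsilon}{4} + \frac{K_2(m(\epsilon))}{4\norm{\bm{q}}}$. Choosing $\kappa = 4K_2(m(\epsilon))/(m(\epsilon)\epsilon)$ makes the right-hand side at most $-\eta \triangleq -m(\epsilon)\epsilon/8$ for every $\bm{q}$ with $\norm{\bm{q}} \geq \kappa$, which is exactly Condition~1.

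Next I would verify the bounded-increment condition (Condition~2). Since $\bm{Q}^{t+m} - \bm{Q}^t = \sum_{l=0}^{m-1}(\bm{A}^{t+l} - \bm{S}^{t+l} + \bm{U}^{t+l})$ has each coordinate bounded in absolute value by $m(A_{max} + S_{max})$, the reverse triangle inequality gives $\abs{\Delta^{m(\epsilon)} Z(\bm{q},\bm{x})} \leq \norm{\bm{Q}^{t+m(\epsilon)} - \bm{Q}^t} \leq N m(\epsilon)(A_{max}+S_{max})$ almost surely, so Condition~2 holds with $D = N m(\epsilon)(A_{max}+S_{max})$. With both conditions verified, Lemma~\ref{Lemma: $m$-step bound for q_perp_norm_2} gives $E\left[e^{\theta^* \norm{\overline{\bm{Q}}}}\right] \leq C^*$ for some $\theta^* > 0$, hence all moments of $\norm{\overline{\bm{Q}}}$ are finite and in particular $E\left[\norm{\overline{\bm{Q}}}^2\right] < \infty$. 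The stated consequence follows immediately because projection onto the subspace $\mathcal{L}$ is non-expansive, so $V'(\bm{q},\bm{x}) = \norm{\bm{q}_{\parallel\mathcal{L}}}^2 \leq \norm{\bm{q}}^2$ and therefore $E\left[V'(\overline{\bm{Q}},\overline{\bm{X}})\right] \leq E\left[\norm{\overline{\bm{Q}}}^2\right] < \infty$.

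I expect the main obstacle to be the mismatch between what is available and what is needed: Claim~\ref{claim: 4.2.1} controls the drift of the \emph{quadratic} function $V = \norm{\bm{q}}^2$, whereas Lemma~\ref{Lemma: $m$-step bound for q_perp_norm_2} delivers its exponential-moment conclusion most cleanly for a Lyapunov function whose drift is negative \emph{additively} outside a bounded set, i.e.\ for the \emph{linear} function $Z = \norm{\bm{q}}$. The concavity-of-square-root inequality is the step that bridges the two, and the remaining care is simply to check that the window $m(\epsilon)$ and the resulting constants $\eta,\kappa,D$ are finite (here $\epsilon$ is the fixed slack for the given $\bm{\lambda} \in int(\mathcal{C})$, not the heavy-traffic parameter), so that Lemma~\ref{Lemma: $m$-step bound for q_perp_norm_2} applies verbatim.
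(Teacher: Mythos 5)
Your proposal is correct and follows essentially the same route as the paper's own proof: the paper likewise applies Lemma~\ref{Lemma: $m$-step bound for q_perp_norm_2} to the Lyapunov function $\norm{\bm{q}} = \sqrt{V(\bm{q},\bm{x})}$, transfers the quadratic drift bound of Claim~\ref{claim: 4.2.1} via the concavity-of-square-root inequality to verify Condition~1, bounds the increments by $N m(\epsilon)(A_{max}+S_{max})$ for Condition~2, and concludes with $V' \leq V$ from the non-expansiveness of projection. The only differences are immaterial choices of the constants $\eta$ and $\kappa$.
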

This lemma is needed to set the drift of $V'  (\bm{q},\bm{x})$ to zero in steady-state. The lemma is proved in Appendix \ref{Proof of Lemma finite V_1, V_2, V_3}.
We will now state and prove the main result of this paper.
\begin{theorem}\label{thm: Switch heavy-traffic limit}
        Consider a set of switch systems under MaxWeight scheduling algorithm, 
        parameterized by heavy-traffic parameter $0<\epsilon<1$ described before. For each system with $\epsilon < v_{\min}/2\norm{\bm{v}}$, in the heavy-traffic limit, as $\epsilon \to 0$, we have 
        \begin{equation}\label{Eq Switch heavy-traffic limit}
        {}
            \lim_{\epsilon \to 0}\epsilon E\left[\sum_{ij}\overline{Q}_{ij}^\epsilon \right]
            =  \sum_{ij}\left(\frac{1}{N} \sum_{j'} \sigma^2_{ij,ij'}+\frac{1}{N} \sum_{i'}\sigma^2_{ij,i'j}-\frac{1}{N^2} \sum_{i'j'} \sigma^2_{ij,i'j'}\right). 
        \end{equation}
\end{theorem}
\begin{corollary}
{}\label{corollary 1}
If the underlying Markov chains are independent across input-output pairs, i.e., $\sigma_{ij,kl}=0, \forall ij\neq kl$, then Eq \eqref{Eq Switch heavy-traffic limit} can be  simplified as
\begin{equation*}
\lim_{\epsilon \to 0}\epsilon E\left[\sum_{ij}\overline{Q}_{ij}^\epsilon \right]=  (1-\frac{1}{2N}  )\norm{\bm{\sigma}}^2. 
\end{equation*}
\end{corollary}

\begin{corollary}
{} \label{corollary 2}
If the arrival process are i.i.d. across time, i.e., $\gamma_{ij,kl} (t)=0, \forall t\in\{1,2,...\}$, then Eq \eqref{Eq Switch heavy-traffic limit} can be simplified as
\begin{equation*}
\lim_{\epsilon \to 0}\epsilon E\left[\sum_{ij}\overline{Q}_{ij}^\epsilon \right]=  \sum_{ij}\left(\frac{1}{N} \sum_{j'} \gamma^2_{ij,ij'}(0)+\frac{1}{N} \sum_{i'}\gamma^2_{ij,i'j}(0)-\frac{1}{N^2} \sum_{i'j'} \gamma^2_{ij,i'j'}(0)\right), 
\end{equation*}
which matches the result in \cite{hurtado2022heavy} (Corollary 1, Page 13).
\end{corollary}
{\textbf{Remark}: Although Theorem \ref{thm: Switch heavy-traffic limit} and Corollaries \ref{corollary 1} and \ref{corollary 2} present the results in the heavy traffic limit as $\epsilon \to 0$, our approach provides upper and lower bounds on $E\left[\sum_{ij}\overline{Q}_{ij}^\epsilon \right]$ for all  $\epsilon \in(0, v_{\min}/2\|\bm{v}\|)$. See equations \eqref{upper bound} and \eqref{lower bound} for 
 the explicit bounds.}
\begin{proof}  {of Theorem \ref{thm: Switch heavy-traffic limit}.  }
    Fix an $\epsilon \in (0, v_{\min}/2\norm{\bm{v}})$ and we consider the system with index $\epsilon$. In the rest of the proof, we consider the system in its steady-state, and so for every time $t$, 
 $\left(\bm{Q}^t\right)^{(\epsilon)} \stackrel{d}= \overline{\bm{Q}}^{(\epsilon)}$. We again skip the superscript ${(\cdot)}^{(\epsilon)}$ in this proof and use $\bm{Q}^t$ to denote the steady state queue length vector. Then $\bm{A}^t$ denotes the arrival vector in steady state and $\bm{Q}^{t+m}$ denote the queue length at time $t+m$, $\bm{Q}^{t+m} = \bm{Q}^t+\sum_{l=0}^{m-1} \bm{A}^{t+l}-\sum_{l=0}^{m-1} \bm{S}^{t+l}+\sum_{l=0}^{m-1} \bm{U}^{t+l}$ which has the same distribution as $\bm{Q}^t$ for all $m \in \mathbb{N}_+$.
    
    The steady state mean $V' (\bm{Q}^t  )$ is finite from Lemma \ref{Lemma: finite V_1, V_2, V_3}. Therefore, we can set the mean drift of $V'  (\cdot  )$ in steady state to zero,
    \begin{align}
        E  \left[\Delta V'  (\bm{q} ,\bm{x} )  \right]=&-\underbrace{2E \left[\left\langle \bm{Q}^t_{{\parallel}\mathcal{L}},\bm{S}^t_{{\parallel}\mathcal{L}}-\bm{A}^t_{{\parallel}\mathcal{L}}  \right\rangle \right]}_{\mathcal{T}_{1}} +\underbrace{E\left[ \norm{\bm{A}^t_{{\parallel}\mathcal{L}}-\bm{S}^t_{{\parallel}\mathcal{L}}}^2 \right]}_{\mathcal{T}_{2}}  \nonumber \\
        &-\underbrace{E \left[ \norm{\bm{U}^t_{{\parallel}\mathcal{L}}}^2 \right]}_{\mathcal{T}_{3}}+\underbrace{2E\left[ \left \langle  \bm{Q}^{t+1}_{{\parallel}\mathcal{L}}, \bm{U}^t_{{\parallel}\mathcal{L}} \right \rangle \right]}_{\mathcal{T}_{4}} =0.\label{sum t}
    \end{align}
We will now bound each of those four terms. In the steady state, we have 
\begin{equation*}
\begin{aligned}
       &E   \left[ \langle\bm{Q}^{t},\bm{A}^{t}  \rangle   \right]
      =E   \left[\langle\bm{Q}^{t+m},\bm{A}^{t+m}  \rangle  \right] \\
\end{aligned}
\end{equation*}
since the steady state mean $E   \left[ \langle\bm{Q}^{t},\bm{A}^{t}  \rangle   \right] \leq \sqrt{E\left[ \norm{\bm{Q}^{t}}^2\right]E\left[ \norm{\bm{A}^{t}}^2 \right]}< \infty $ according to Lemma \ref{Lemma: finite V_1, V_2, V_3}. 
        \begin{align}
            \mathcal{T}_{1} =& 2E \left[\left\langle \bm{Q}^t_{{\parallel}\mathcal{L}},\bm{S}^t_{{\parallel}\mathcal{L}}-\bm{\lambda}_{{\parallel}\mathcal{L}}  \right\rangle \right]+2E \left[\left\langle \bm{Q}^t_{{\parallel}\mathcal{L}},\bm{\lambda}_{{\parallel}\mathcal{L}}-\bm{A}^t_{{\parallel}\mathcal{L}}  \right\rangle \right]  \nonumber\\
             =& 2E \left[\left\langle \bm{Q}^t_{{\parallel}\mathcal{L}},\bm{S}^t_{{\parallel}\mathcal{L}}-\bm{\lambda}_{{\parallel}\mathcal{L}}  \right\rangle \right]+2E \left[\left\langle \bm{Q}^{t+m}_{{\parallel}\mathcal{L}},\bm{\lambda}_{{\parallel}\mathcal{L}}-\bm{A}^{t+m}_{{\parallel}\mathcal{L}}  \right\rangle \right] \nonumber \\
             = &2E \left[\left\langle \bm{Q}^t_{{\parallel}\mathcal{L}},\bm{S}^t_{{\parallel}\mathcal{L}}-\bm{\lambda}_{{\parallel}\mathcal{L}}  \right\rangle \right]  \nonumber\\
            & +2E \left[\left\langle \bm{Q}^{t}_{{\parallel}\mathcal{L}}+\sum_{l=1}^m \bm{A}^{t+m-l}_{{\parallel}\mathcal{L}}-\bm{S}^{t+m-l}_{{\parallel}\mathcal{L}}+\bm{U}^{t-m+l}_{{\parallel}\mathcal{L}},\bm{\lambda}_{{\parallel}\mathcal{L}}-\bm{A}^{t+m}_{{\parallel}\mathcal{L}}  \right\rangle \right]  \nonumber\\
             = &2E \left[\left\langle \bm{Q}^t_{{\parallel}\mathcal{L}},\bm{S}^t_{{\parallel}\mathcal{L}}-\bm{\lambda}_{{\parallel}\mathcal{L}}  \right\rangle \right]+2E \left[\left\langle \bm{Q}^{t}_{{\parallel}\mathcal{L}},\bm{\lambda}_{{\parallel}\mathcal{L}}-\bm{A}^{t+m}_{{\parallel}\mathcal{L}}  \right\rangle \right]  \nonumber\\
             & +2E \left[\left\langle\sum_{l=1}^m \bm{A}^{t+m-l}_{{\parallel}\mathcal{L}}-\bm{\lambda}_{{\parallel}\mathcal{L}},\bm{\lambda}_{{\parallel}\mathcal{L}}-\bm{A}^{t+m}_{{\parallel}\mathcal{L}}  \right\rangle \right]  \nonumber\\
            & +2E \left[\left\langle\sum_{l=1}^m \bm{\lambda}_{{\parallel}\mathcal{L}}-\bm{S}^{t+m-l}_{{\parallel}\mathcal{L}},\bm{\lambda}_{{\parallel}\mathcal{L}}-\bm{A}^{t+m}_{{\parallel}\mathcal{L}}  \right\rangle \right]  \nonumber\\
            & +2E \left[\left\langle\sum_{l=1}^m \bm{U}^{t-m+l}_{{\parallel}\mathcal{L}}, \bm{\lambda}_{{\parallel}\mathcal{L}}-\bm{A}^{t+m}_{{\parallel}\mathcal{L}}  \right\rangle \right]  \label{eq: t1}
        \end{align}
    
We will now bound each of the terms on the RHS of \eqref{eq: t1}. The first term  can be simplified as following. Since we assumed that the schedule is always perfect matching, we have, $\sum_i s_{ij}=\sum_j s_{ij}=1$. Since $\bm{\lambda}^\epsilon =   (1-\epsilon  )\bm{v}$ and $\bm{v} \in \mathcal{F}$,  $\sum_i \lambda_{ij}=\sum_j \lambda_{ij}=1-\epsilon$, according to \eqref{eq: <x||,y||>} we have
    \begin{equation*}
        \begin{aligned}
            E \left[\left\langle \bm{Q}^t_{{\parallel}\mathcal{L}},\bm{S}^t_{{\parallel}\mathcal{L}}-\bm{\lambda}_{{\parallel}\mathcal{L}}  \right\rangle \right]= \frac{\epsilon}{N} E \left[\sum_{ij}Q_{ij}^t\right].
        \end{aligned}
    \end{equation*}
    
    For second term in the RHS of \eqref{eq: t1}, according to \eqref{eq: <x||,y||>} we have
        \begin{align*}
            & E \left[\left\langle \bm{Q}^{t}_{{\parallel}\mathcal{L}},\bm{\lambda}_{{\parallel}\mathcal{L}}-\bm{A}^{t+m}_{{\parallel}\mathcal{L}}  \right\rangle \right] \\
            =& \frac{1}{N}E   \left[\sum_{ij} Q_{ij}^t    \left(\sum_{j'}  \left(\lambda_{ij'}-A^{t+m}_{ij'}  \right)    \right)    \right]\\
            &+  \frac{1}{N}E   \left[\sum_{ij} Q_{ij}^t    \left(\sum_{i'}  \left(\lambda_{i'j}-A^{t+m}_{i'j}  \right)    \right)    \right]\\
            &-\frac{1}{N^2}E   \left[    \sum_{ij} Q_{ij}^t     \left(\sum_{i'j'}  \left(\lambda_{i'j'}-A^{t+m}_{i'j'}  \right)    \right)    \right].
        \end{align*}
    According to Lemma \ref{lem: geo}, due to geometric mixing of the Markov chain underlying the arrivals, we have
    \begin{equation*}
        \begin{aligned}
           \left \vert E   \left[\sum_{ij}Q_{ij}^t   \left(\sum_{j'}  \left(\lambda_{ij'}-A^{t+m}_{ij'}  \right)    \right)    \right] \right\vert
           &\leq E   \left[\sum_{ij} Q_{ij}^t      \left(\sum_{j'}2A_{max}C_{ij}\alpha_{ij}^m    \right)    \right]\\
           &\leq 2NA_{max}C_{max}\alpha_{max}^m E   \left[ \sum_{ij} Q_{ij}^t     \right].
        \end{aligned}
    \end{equation*}
    where $C_{max} = \max_{ij}\{C_{ij}\}$ and $\alpha_{max} = \max_{ij}\{\alpha_{ij}\}$.
    Thus the second term in the RHS \eqref{eq: t1} can be bounded as follows,
    \begin{equation*}
        \abs{E \left[\left\langle \bm{Q}^{t}_{{\parallel}\mathcal{L}},\bm{\lambda}_{{\parallel}\mathcal{L}}-\bm{A}^{t+m}_{{\parallel}\mathcal{L}}  \right\rangle \right]} \leq 6A_{max}C_{max}\alpha_{max}^m E   \left[ \sum_{ij} Q_{ij}^t     \right].
    \end{equation*}
    
    For the third term in the RHS of \eqref{eq: t1}, according to \eqref{eq: <x||,y||>} we have 
    \begin{equation*}
    {}
        \begin{aligned}
           &E \left[\left\langle\sum_{l=1}^m \bm{A}^{t+m-l}_{{\parallel}\mathcal{L}}-\bm{\lambda}_{{\parallel}\mathcal{L}},\bm{\lambda}_{{\parallel}\mathcal{L}}-\bm{A}^{t+m}_{{\parallel}\mathcal{L}}  \right\rangle \right]\\
           & = -\sum_{ij}\sum_{l=1}^{m}E \bigg[  \sum_{j'}\frac{1}{N}(A_{ij}^{t+m}-\lambda_{ij}  ) (A_{ij'}^{t+m-l}-\lambda_{ij'})\\
           &\quad\quad\quad\quad\quad\quad +\sum_{i'}\frac{1}{N}(A_{ij}^{t+m}-\lambda_{ij}  ) (A_{i'j}^{t+m-l}-\lambda_{i'j})\\
           &\quad\quad\quad\quad\quad\quad -\sum_{i'j'} \frac{1}{N^2} (A_{ij}^{t+m}-\lambda_{ij}  )(A_{i'j'}^{t+m-l}-\lambda_{i'j'}) \bigg]\\
           & = -\sum_{ij}\sum_{l=1}^{m}\left(\frac{1}{N} \sum_{j'} \gamma_{ij,ij'}(l)+\frac{1}{N} \sum_{i'} \gamma_{ij,i'j}(l)-\frac{1}{N^2} \sum_{i'j'} \gamma_{ij,i'j'}(l)  \right).
        \end{aligned}
    \end{equation*}
    
    For the fourth  term in the RHS of \eqref{eq: t1}, since the future arrival is independent of past service, and the Markov chain is in steady state, according to \eqref{eq: <x||,y||>} we have
    \begin{equation*}
        \begin{aligned}
               E \left[\left\langle\sum_{l=1}^m \bm{\lambda}^{t+m-l}_{{\parallel}\mathcal{L}}-\bm{S}^{t+m-l}_{{\parallel}\mathcal{L}},\bm{\lambda}_{{\parallel}\mathcal{L}}-\bm{A}^{t+m}_{{\parallel}\mathcal{L}}  \right\rangle \right]
               =0.
        \end{aligned}
    \end{equation*}
    
    Now we bound the fifth term in the RHS \eqref{eq: t1}. From Lemma \ref{Lemma: finite V_1, V_2, V_3},
    we have $E\left[\norm{\bm{\overline{Q}}}_2\right] < \infty$. Since $\abs{E\left[ \sum_{ij} \overline{Q}_{ij} \right]} = \norm{\bm{\overline{Q}}}_1 \leq N^2E\left[\norm{\bm{\overline{Q}}}_2\right]$, 
    we conclude that $E\left[ \sum_{ij} \overline{Q}_{ij} \right]$ is finite. Thus the drift is zero in steady state,
    \begin{align*}
            E    \left[ \sum_{ij} \left( Q^{t+1}_{ij} - Q^{t}_{ij}\right) \right] &= E   \left[  \sum_{ij} \left( A^t_{ij} - S^t_{ij} +U^t_{ij} \right) \right]\\
            &= E   \left[ N(1-\epsilon) - N+  \sum_{ij}  U^t_{ij}   \right] 
            =0,
    \end{align*}
    which gives
        $E   \left[   \sum_{ij} U_{ij}^t \right]  = N\epsilon. $
        
    Since $\abs{\sum_{j'}  \left(\lambda_{ij'}-A^{t+m}_{ij'}  \right) }\leq 1+N A_{max}$, according to \eqref{eq: <x||,y||>}, we have
    \begin{equation*}
        \begin{aligned}
           \abs{E \left[\left\langle\sum_{l=1}^m \bm{U}^{t-m+l}_{{\parallel}\mathcal{L}}, \bm{\lambda}_{{\parallel}\mathcal{L}}-\bm{A}^{t+m}_{{\parallel}\mathcal{L}}  \right\rangle \right]} \leq 3m\epsilon   (1+NA_{max}).
        \end{aligned}
    \end{equation*}
    Putting all terms back in \eqref{eq: t1}, we conclude that
    \begin{equation}\label{eq: T01}
        {}
        \begin{aligned}
           \mathcal{T}_{1} \geq & 2\left (\frac{1}{N}-6A_{max}C_{max}\frac{\alpha_{max}^m}{\epsilon}\right) E \left[\epsilon\sum_{ij} Q_{ij}^t\right]\\
           &-2\sum_{ij}\sum_{l=1}^{m}\left(\frac{1}{N} \sum_{j'} \gamma_{ij,ij'}(l)+\frac{1}{N} \sum_{i'} \gamma_{ij,i'j}(l)-\frac{1}{N^2} \sum_{i'j'} \gamma_{ij,i'j'}(l)  \right)\\
           &-6m\epsilon   (1+NA_{max}  ),
        \end{aligned}
    \end{equation}
    and
    \begin{equation}\label{eq: T02}
    {}
        \begin{aligned}
           \mathcal{T}_{1}  \leq &2\left (\frac{1}{N}+6A_{max}C_{max}\frac{\alpha_{max}^m}{\epsilon}\right) E \left[\epsilon\sum_{ij} Q_{ij}^t\right]\\
           &-2\sum_{ij}\sum_{l=1}^{m}\left(\frac{1}{N} \sum_{j'} \gamma_{ij,ij'}(l)+\frac{1}{N} \sum_{i'} \gamma_{ij,i'j}(l)-\frac{1}{N^2} \sum_{i'j'} \gamma_{ij,i'j'}(l)  \right)\\
           &+6m\epsilon   (1+NA_{max}  ).
        \end{aligned}
    \end{equation}    

In bounding $\mathcal{T}_{2}$,  $\mathcal{T}_{3}$ and  $\mathcal{T}_{4}$, we do not have to deal with the correlation between $\bm{Q}$ and $\bm{A}$, and so they can be bounded in the same manner as in \cite{maguluri2016heavy}. We present a brief overview here. It can be shown as in \cite{maguluri2016heavy,maguluri2018optimal} that
    \begin{equation}\label{eq: T2}
    {}
        \begin{aligned}
            \mathcal{T}_{2}&=E\left[ \norm{\bm{A}^t_{{\parallel}\mathcal{L}}-\bm{S}^t_{{\parallel}\mathcal{L}}}^2 \right]\\
            & = E\left[ \norm{\bm{\lambda}_{{\parallel}\mathcal{L}}-\bm{S}^t_{{\parallel}\mathcal{L}}}^2 \right]+E\left[ \norm{\bm{A}^t_{{\parallel}\mathcal{L}}-\bm{\lambda}_{{\parallel}\mathcal{L}}}^2 \right]\\
            &= \epsilon^2+  \sum_{ij}\left(\frac{1}{N} \sum_{j'} \gamma_{ij,ij'}(0)+\frac{1}{N} \sum_{i'} \gamma_{ij,i'j}(0)-\frac{1}{N^2} \sum_{i'j'} \gamma_{ij,i'j'}(0)  \right).
        \end{aligned}
    \end{equation}    

 The term   $\mathcal{T}_{3}$ can be bounded as 
    \begin{equation}
        0\leq \mathcal{T}_{3} = E \left[ \norm{\bm{U}^t_{{\parallel}\mathcal{L}}}^2 \right]\\
        \leq E \left[ \norm{\bm{U}^t}^2 \right]\\
        =E \left[ \sum_{ij}\left(U_{ij}^t\right)^2 \right] \stackrel{(a)}= E \left[ \sum_{ij}U_{ij}^t \right]\\
         =N\epsilon
    \end{equation}
    where $(a)$ follows from $u_{ij} \in\{0,1\}$, and the last equality follows from \eqref{ E(u)=epsilon}. Now for the term $\mathcal{T}_{4}$, we have 
    \begin{align}\label{eq: T3,4}
        \mathcal{T}_{4} = 2E\left[ \left \langle  \bm{Q}^{t+1}_{{\parallel}\mathcal{L}}, \bm{U}^t_{{\parallel}\mathcal{L}} \right \rangle \right]
        =2E\left[ \left \langle  \bm{Q}^{t+1}_{{\parallel}\mathcal{L}}, \bm{U}^t \right \rangle \right]\notag
        =&2E\left[ \left \langle  \bm{Q}^{t+1}, \bm{U}^t \right \rangle \right]-2E\left[ \left \langle  \bm{Q}^{t+1}_{{\perp}\mathcal{L}}, \bm{U}^t\right \rangle \right]\notag\\
        \stackrel{(a)}=&-2E\left[ \left \langle  \bm{Q}^{t+1}_{{\perp}\mathcal{L}}, \bm{U}^t\right \rangle \right]
    \end{align}
    where $(a)$ follows from the definition of unused service.  Using Cauchy–Schwartz inequality, we have
    \begin{align}\label{eq: T5}
        \abs{2E\left[ \left \langle  \bm{Q}^{t+1}_{{\perp}\mathcal{L}}, \bm{U}^t\right \rangle \right]}\leq 2\sqrt{E\left[\norm{\bm{Q}^{t+1}_{{\perp}\mathcal{L}}}^2 \right]E\left[\norm{\bm{U}^t}^2 \right]}
        \leq 2\sqrt{K^{\star\star}N\epsilon}
    \end{align}
    where the last inequality follows from Proposition \ref{prop: upper bound of q_perp}.
    Substitute (\ref{eq: T01}-\ref{eq: T5}) into (\ref{sum t}) and put back the superscript ${(\cdot)}^{(\epsilon)}$ and pick $m(\epsilon)=\lfloor1/\sqrt{\epsilon}\rfloor$, we get
    \begin{align}\label{upper bound}
                &2\left (\frac{1}{N}-6A_{max}C_{max}\frac{\alpha_{max}^{m(\epsilon)}}{\epsilon}\right) E \left[\epsilon\sum_{ij} \overline{Q}^{(\epsilon)}_{ij}\right] \nonumber \\ 
                \leq &2\sum_{ij}\sum_{l=1}^{m(\epsilon)}\left(\frac{1}{N} \sum_{j'} \gamma^{(\epsilon)}_{ij,ij'}(l)+\frac{1}{N} \sum_{i'} \gamma^{(\epsilon)}_{ij,i'j}(l)-\frac{1}{N^2} \sum_{i'j'} \gamma^{(\epsilon)}_{ij,i'j'}(l)  \right)
                +6m(\epsilon)\epsilon   (1+NA_{max}  )\nonumber \\
               &+\epsilon^2+  \sum_{ij}\left(\frac{1}{N} \sum_{j'} \gamma^{(\epsilon)}_{ij,ij'}(0)+\frac{1}{N} \sum_{i'} \gamma^{(\epsilon)}_{ij,i'j}(0)-\frac{1}{N^2} \sum_{i'j'} \gamma^{(\epsilon)}_{ij,i'j'}(0)  \right)+N\epsilon+2\sqrt{K^{\star\star}N\epsilon}, 
    \end{align}
    and
    \begin{align}\label{lower bound}
           &2\left (\frac{1}{N}+6A_{max}C_{max}\frac{\alpha_{max}^{m(\epsilon)}}{\epsilon}\right) E \left[\epsilon\sum_{ij} \overline{Q}^{(\epsilon)}_{ij}\right]\nonumber \\
           \geq&2\sum_{ij}\sum_{l=1}^{m(\epsilon)}\left(\frac{1}{N} \sum_{j'} \gamma^{(\epsilon)}_{ij,ij'}(l)+\frac{1}{N} \sum_{i'} \gamma^{(\epsilon)}_{ij,i'j}(l)-\frac{1}{N^2} \sum_{i'j'} \gamma^{(\epsilon)}_{ij,i'j'}(l)  \right)
           -6m(\epsilon)\epsilon   (1+NA_{max}  ) \nonumber \\
           &+\epsilon^2+  \sum_{ij}\left(\frac{1}{N} \sum_{j'} \gamma^{(\epsilon)}_{ij,ij'}(0)+\frac{1}{N} \sum_{i'} \gamma^{(\epsilon)}_{ij,i'j}(0)-\frac{1}{N^2} \sum_{i'j'} \gamma^{(\epsilon)}_{ij,i'j'}(0)  \right) -2\sqrt{K^{\star\star}N\epsilon}.
    \end{align}
    Let $\epsilon \to 0$,  we have
    \begin{equation*}
    {}
    \begin{aligned}
        &\lim_{\epsilon \to 0}\epsilon E\left[ \sum_{ij}\overline{Q}^{(\epsilon)}_{ij} \right] \\
        &= \lim_{\epsilon \to 0} \sum_{ij}\left(\frac{1}{N} \sum_{j'} \gamma^{(\epsilon)}_{ij,ij'}(0)+\frac{1}{N} \sum_{i'} \gamma^{(\epsilon)}_{ij,i'j}(0)-\frac{1}{N^2} \sum_{i'j'} \gamma^{(\epsilon)}_{ij,i'j'}(0)\right)\\
        &+ 2\sum_{ij}\sum_{t=1}^{\frac{1}{\lfloor1/\sqrt{\epsilon}\rfloor}}\left(\frac{1}{N} \sum_{j'} \gamma^{(\epsilon)}_{ij,ij'}(l)+\frac{1}{N} \sum_{i'} \gamma^{(\epsilon)}_{ij,i'j}(l)-\frac{1}{N^2} \sum_{i'j'} \gamma^{(\epsilon)}_{ij,i'j'}(l)  \right) .     
    \end{aligned}
    \end{equation*}
    The proof is now complete after using Claim \ref{claim: 3.2.3}.
\end{proof}

\section{Conclusion}\label{sec:conclusion}
In this paper, we {{}analyzed} the heavy-traffic  behavior in a switch operating under the MaxWeight scheduling algorithm when the arrivals are Markovian.
The steady state sum queue length was obtained, which is consistent with the result in i.i.d. case. This paper generalized the drift method that was developed in \cite{eryilmaz2012asymptotically,maguluri2016heavy} and the transform method in \cite{hurtado2020transform} to the case of Markovian arrivals. The key ideas are to consider drift over a time window whose size depends on the heavy-traffic parameter, and to exploit geometric mixing of Markov chains to get a handle on the Markovian correlations.

There are several possible future directions. An immediate future direction is to generalize the result to the so-called `Generalized switch model' \cite{stolyar2004maxweight,langegenswitch} when the complete resource pooling condition is not satisfied. This is a general queueing model that includes a switch that is incompletely saturated, a switch where the arrivals across the ports are saturated, wireless networks under interference and fading, cloud computing scheduling etc. A second future direction is to establish the generality of Markovian arrivals in discrete-time systems. It was shown in \cite{asmussen1993marked} that Markovian arrival processes approximate any marked point process to an arbitrary degree of accuracy. Exploring the validity of such a result for discrete-time arrival processes is an open question.  
\section*{Acknowledgments}
The authors thank Dr. Daniela Hurtado Lange for her discussions. This work was partially supported by NSF grants EPCN-2144316, CMMI-2140534 and CCF-1850439.
\bibliographystyle{APT}
\bibliography{APT_final_version/APT_bib} 

\renewcommand{\theHsection}{A\arabic{section}}
\begin{APPENDICES}

\section{Heavy-traffic Limit of Queue Length Distribution}\label{Appd, queue lenth distribution}
In this subsection, we will obtain the heavy-traffic limiting steady state distribution of $\epsilon \overline{Q}^{(\epsilon)}$. One way of doing this is to use $q^{t+1}$ as the test function to obtain the $k^{\text{th}}$ moment of the queue length. Once all the moments are obtained, the distribution can be inferred. Such an approach was used in \cite{eryilmaz2012asymptotically}. Here, we instead using the transform method that was presented in \cite{hurtado2020transform}. The key contribution is to extend the result in \cite{hurtado2020transform} to the case of Markovian arrivals in a single server queue.
\begin{theorem}\label{thm: exp}
Consider the same setting in Theorem \ref{thm: pos rec single server queue}.
Let $\overline{Q}^{  (\epsilon  )}$ be a steady-state random variable to which the queue length processes $\left\{Q^t\right\}^{  (\epsilon  )}_{t \geq 1}$ converges in distribution. 
Then for any $\theta \leq 0$,
\begin{equation*}
    \lim_{\epsilon \to 0}E  \left[e^{\epsilon\theta \overline{Q}^{(\epsilon)}}  \right] = \frac{1}{1-\theta\frac{\sigma_a^2+\sigma_s^2}{2}}.
\end{equation*}
    Therefore, we have that $\epsilon \overline{Q}^{(\epsilon)}$ converges in distribution to an exponential variable with mean $\frac{\sigma_a^2+\sigma_s^2}{2}$.
\end{theorem}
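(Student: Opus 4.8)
The plan is to mimic the transform method: use the bounded test function $e^{\epsilon\theta Q}$ (for $\theta \le 0$, so that $0 < e^{\epsilon\theta Q}\le 1$), set its one-step drift to zero in steady state, and exploit the complementarity $Q^{t+1}U^t=0$ from (\ref{eq: 2}) to obtain a clean transform equation. Concretely, writing $e^{\epsilon\theta Q^{t+1}} = e^{\epsilon\theta(Q^t+A^t-S^t)}e^{\epsilon\theta U^t}$ and using that $U^t>0$ forces $Q^{t+1}=0$ (so $Q^t+A^t-S^t=-U^t$), the stationary identity $E[e^{\epsilon\theta Q^{t+1}}] = E[e^{\epsilon\theta Q^t}]$ collapses to
\begin{equation*}
E[e^{\epsilon\theta Q^t}] - E\big[e^{\epsilon\theta(Q^t+A^t-S^t)}\big] = E\big[1-e^{-\epsilon\theta U^t}\big].
\end{equation*}
Because service is i.i.d.\ and independent of $(Q^t,A^t)$, the left-hand side factors as $E[e^{\epsilon\theta Q^t}] - E[e^{\epsilon\theta(Q^t+A^t)}]\,E[e^{-\epsilon\theta S^t}]$, which isolates the one genuinely Markovian object, $E[e^{\epsilon\theta(Q^t+A^t)}]$.

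Next I would Taylor-expand everything in $\epsilon$ to order $\epsilon^2$; boundedness of $A^t,S^t,U^t$ together with $e^{\epsilon\theta Q^t}\le 1$ makes every remainder uniformly $O(\epsilon^3)$. Writing $\mathcal{M}_\epsilon=E[e^{\epsilon\theta Q^t}]$ and $e^{\epsilon\theta A^t}=1+\epsilon\theta A^t+\tfrac12\epsilon^2\theta^2(A^t)^2+O(\epsilon^3)$, the only terms surviving at order $\epsilon^2$ are: the service second moment from $E[e^{-\epsilon\theta S^t}]$; the boundary term $E[1-e^{-\epsilon\theta U^t}]=\epsilon\theta E[U^t]+O(\epsilon^3)=\epsilon^2\theta+O(\epsilon^3)$ (using the stationary relation $E[U^t]=\mu-\lambda=\epsilon$ and $E[(U^t)^2]=O(\epsilon)$); the marginal arrival second moment $P=E[e^{\epsilon\theta Q^t}(A^t)^2]$, which a single-lag mixing argument sends to $\mathcal{M}(\theta)(\gamma(0)+\mu^2)$ (here only convergence, not a rate, is needed); and, crucially, the correlation term $R = E[e^{\epsilon\theta Q^t}(A^t-\lambda)]$, the transform-method analogue of the $E[2Q^t(A^t-\lambda)]$ term in the proof of Theorem \ref{thm: pos rec single server queue}.

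The heart of the proof is to show $\lim_{\epsilon\to0}R/\epsilon=\theta\,\mathcal{M}(\theta)\,\lim_{m}\sum_{l=1}^m\gamma(l)$, and this is where the $m$-step idea and Lemma \ref{lem: geo} enter. With $m=\lfloor 1/\sqrt{\epsilon}\rfloor$, I would split $R=E[(e^{\epsilon\theta Q^t}-e^{\epsilon\theta Q^{t-m}})(A^t-\lambda)]+E[e^{\epsilon\theta Q^{t-m}}(A^t-\lambda)]$; the second term is $O(\alpha^m)$ by conditioning on $X^{t-m}$ and applying Lemma \ref{lem: geo}, hence $o(\epsilon)$. Using $e^{\epsilon\theta Q^t}-e^{\epsilon\theta Q^{t-m}}=e^{\epsilon\theta Q^{t-m}}\big(\epsilon\theta(Q^t-Q^{t-m})+O(\epsilon^2(Q^t-Q^{t-m})^2)\big)$ and substituting $Q^t-Q^{t-m}=\sum_{l=1}^m(A^{t-l}-S^{t-l}+U^{t-l})$, the service increments drop out (independence, zero mean) and the unused-service increments contribute $O(\epsilon^2 m)=o(\epsilon)$, leaving $\epsilon\theta\sum_{l=1}^m E[e^{\epsilon\theta Q^{t-m}}(A^{t-l}-\lambda)(A^t-\lambda)]$. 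Conditioning on $X^{t-m}$ reduces each summand to $E[e^{\epsilon\theta Q^{t-m}}\psi_l(X^{t-m})]$ with $\psi_l(x)=E[(A^{t-l}-\lambda)(A^t-\lambda)\mid X^{t-m}=x]$; two applications of Lemma \ref{lem: geo} give the uniform bound $|\psi_l(x)-\gamma(l)|=O(\alpha^m)$ (the factor $\alpha^l$ carried by both $\psi_l$ and $\gamma(l)$ combines with the $\alpha^{m-l}$ from mixing the initial state), so the decorrelation error over the whole window is $O(m\alpha^m)=o(1)$. Thus $R/\epsilon\to\theta\mathcal{M}(\theta)\lim_m\sum_{l=1}^m\gamma(l)$, and Claim \ref{claim: 3.2.3} identifies this limit with $\sum_{l\ge1}\gamma(l)$.

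The main obstacle is the uniform control of the quadratic remainder in the step above, which is governed by $E[(Q^t-Q^{t-m})^2]$; I would bound this by $O(m)$ uniformly in $\epsilon$, decomposing the increment into arrival, service, and unused-service parts---the first two by geometric mixing and i.i.d.-ness, and the last using that $P(U^t>0)=O(\epsilon)$ so that $\sum_{l=1}^m U^{t-l}$ has second moment $O(m^2\epsilon)=O(1)$. This, together with the uniform mixing hypotheses $\sup_\epsilon\alpha^{(\epsilon)}\le\alpha$, $\sup_\epsilon C^{(\epsilon)}\le C$ and the uniform exponential moment from Lemma \ref{Lemma: $m$-step bound for q_perp_norm_2}, makes all $o(\epsilon)$ estimates uniform. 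Assembling the surviving $O(\epsilon^2)$ terms, factoring out $\mathcal{M}_\epsilon(\theta)$ (note $R$ and $P$ are themselves $\mathcal{M}_\epsilon(\theta)$ times a convergent factor plus lower-order error), dividing by $\epsilon^2$ and letting $\epsilon\to0$ yields $\mathcal{M}_\epsilon(\theta)\big[\theta-\theta^2\tfrac{\sigma_a^2+\sigma_s^2}{2}\big]\to\theta$, where $\tfrac{\gamma(0)}{2}+\sum_{l\ge1}\gamma(l)=\tfrac{\sigma_a^2}{2}$. Solving gives $\lim_\epsilon\mathcal{M}_\epsilon(\theta)=1/(1-\theta\tfrac{\sigma_a^2+\sigma_s^2}{2})$, the moment generating function of an exponential with mean $\tfrac{\sigma_a^2+\sigma_s^2}{2}$; since $\epsilon\overline{Q}^{(\epsilon)}\ge0$ and the convergence holds for every $\theta\le0$, the Laplace-transform continuity theorem upgrades this to convergence in distribution.
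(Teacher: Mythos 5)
Your proposal is correct, and it follows the same skeleton as the paper's proof --- the exponential test function $e^{\epsilon\theta q}$, the complementarity identity $e^{\epsilon\theta Q^{t+1}}=1-e^{-\epsilon\theta U^t}+e^{\epsilon\theta(Q^t+A^t-S^t)}$, a mixing window $m$ tied to $\epsilon$, Claim \ref{claim: 3.2.3} to identify $\lim_{\epsilon\to0}\sum_{l\le m(\epsilon)}\gamma^{(\epsilon)}(l)$, and the Laplace continuity theorem --- but your execution differs in three substantive ways. First, the paper propagates the stationary identity forward $m$ steps, expanding $e^{\epsilon\theta Q^{t+m}}=e^{\epsilon\theta Q^t}e^{\epsilon\theta\left(\sum_l A^{t+l}-\sum_l S^{t+l}+\sum_l U^{t+l}\right)}$ and sorting the outcome into seven terms $\mathcal{T}_5$ through $\mathcal{T}_{11}$, whereas you stay with the one-step transform identity, factor out the i.i.d.\ service transform, and apply the windowing only to the genuinely Markovian objects $R=E\left[e^{\epsilon\theta Q^t}(A^t-\lambda)\right]$ and $P$; this is more modular and localizes the bookkeeping. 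Second, where the paper invokes Lemma \ref{lem: comparing} (comparison with an independent copy $b^t$ of the modulating chain), you re-derive exactly its content inline: your uniform bound $\abs{\psi_l(x)-\gamma(l)}=O(\alpha^{l}\alpha^{m-l})=O(\alpha^m)$, summed over the window to $O(m\alpha^m)$, is the same estimate as the lemma's $4(A_{max}+\lambda)A_{max}C^2m\alpha^m$ (note $E\left[(b^{m-l}-\lambda)(b^0-\lambda)\right]=\gamma(m-l)$, so the two statements coincide). Third, your window $m=\lfloor 1/\sqrt{\epsilon}\rfloor$ is larger than the paper's $m=\epsilon^{-1/4}$, and this choice is only viable because of the sharper moment estimate you supply: with the crude pointwise bound $\abs{Q^t-Q^{t-m}}\le m(A_{max}+S_{max})$ the quadratic Taylor remainder would be $O(\epsilon^2m^2)=O(\epsilon)$, the same order as the signal, so your proof genuinely needs $E\left[(Q^t-Q^{t-m})^2\right]=O(m)$, which you correctly obtain from the geometric decay of $\gamma^{(\epsilon)}$ (uniform in $\epsilon$ by the hypotheses $\sup_\epsilon C^{(\epsilon)}\le C$, $\sup_\epsilon\alpha^{(\epsilon)}\le\alpha$), the i.i.d.\ services, and $E\left[\left(\sum_{l=1}^m U^{t-l}\right)^2\right]\le S_{max}m^2\epsilon=O(1)$ via $E[U^t]=\epsilon$. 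The paper's smaller window buys it the right to use only crude bounds (for instance its Cauchy--Schwarz error $mO(\sqrt{\epsilon})$ in $\mathcal{T}_6$ would not vanish at $m=1/\sqrt{\epsilon}$, while your direct bound $E\left[\sum_l U^{t-l}\right]=m\epsilon$ on that cross term does). All remaining ingredients you use ($E[U^t]=\epsilon$, $E[(U^t)^2]\le S_{max}\epsilon$, stationarity, conditional independence of $Q^{t-m}$ and the future arrivals given $X^{t-m}$) are available in the theorem's setting, so your argument closes and yields the same transform limit and the same exponential conclusion.
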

The proof is presented in Appendix \ref{Appd: Heavy-traffic Limit Distribution for Queue Length}. The key idea is to consider the  $m$-step  drift of the exponential test function,   $e^{\epsilon\theta q}$. As in the proof of Theorem \ref{thm: pos rec single server queue}, we again pick $m$ as a function of $\epsilon$, and exploit the  mixing rate of the underlying Markov chain. In addition, we use the following lemma to compare the arrival process with an independent Markovian process with the same transition probabilities. This lemma, which is proved in Appendix \ref{Proof of Lemma: comparing} enables us to asymptotically decouple of the queue length and arrival process in the heavy-traffic regime. 
\begin{lemma}\label{lem: comparing}
        For every $\epsilon \in (0,\mu)$, let $\left\{Y^t\right\}^{(\epsilon)}_{t\geq0}$ be a Markov chain that is independent of the chain $\left\{X^t\right\}^{(\epsilon)}_{t \geq 0}$, but is defined on the same state space $\Omega$, and with the same transition probability, and consequently has the same stationary distribution, i.e. $\overline{Y}^{  (\epsilon  )}\stackrel{d}=\overline{X}^{  (\epsilon  )} \sim \pi^{(\epsilon)}$. 
        Let $\left(b^t\right)^{(\epsilon)}=f  \left(\left(Y^t\right)^{(\epsilon)} \right )$ and ${\left(Y^0\right)}^{(\epsilon)} \sim \pi^{(\epsilon)}$. Then,
        {\scriptsize
        \begin{align*}
            & \Bigg\vert \sum_{l=0}^{m-1}  \bigg\{ E  \left [  \left (\left(A^{t+l}\right)^{(\epsilon)}-\lambda^{(\epsilon)}  \right)  \left (\left(A^{t+m}\right)^{(\epsilon)}-\lambda^{(\epsilon)}  \right) -  \left(\left(b^{m-l}\right)^{(\epsilon)}-\lambda^{(\epsilon)}  \right)  \left (\left(b^{0}\right)^{(\epsilon)}-\lambda^{(\epsilon)}  \right) \mid \left(X^{t}\right)^{(\epsilon)} \right]  \bigg\}\Bigg\vert\\
            & \leq 4  \left (A_{max}+\lambda^{(\epsilon)} \right)A_{max}\left (C^{(\epsilon)}\right)^2m\left(\alpha^{(\epsilon)}\right)^m,
        \end{align*}
        }
        and
        {\footnotesize\begin{align*}
            \abs{E  \left[  \left(\left(A^{t+m}\right)^{(\epsilon)}-\lambda^{(\epsilon)}  \right)^2-  \left(\left(b^0\right)^{(\epsilon)}-\lambda^{(\epsilon)}  \right)^2 \mid \left(X^{t}\right)^{(\epsilon)} \right ]}  \leq 2 \left (A_{max}+\lambda^{(\epsilon)}  \right)^2\left(C^{(\epsilon)}\right)\left(\alpha^{(\epsilon)}\right)^m.
        \end{align*}}
\end{lemma}

\section{Proof of Lemmas in Section \ref{sec:preliminaries}}
\subsection{Proof of Lemma \ref{Lemma: $m$-step bound for q_perp_norm_2}} \label{proof of Lemma: $m$-step bound for q_perp_norm_2}
    \begin{proof}  {of Lemma \ref{Lemma: $m$-step bound for q_perp_norm_2}.  } Let $\theta \in (0,1)$, applying Taylor expansion and using Condition 2 in the Lemma, we have
       \begin{align}\label{B4.1}
              & E \left[e^{\theta \Delta^m Z  (\bm{q},\bm{x}  )}\mid   \left(\bm{Q}^t, \bm{X}^t  \right)=  \left(\bm{q},\bm{x}  \right)    \right]\nonumber\\
              & \leq E\left[1+\theta \Delta^m Z  (\bm{q},\bm{x}  )+\sum_{l=2}^\infty \frac{\theta^l D^l}{l!}\mid   \left(\bm{Q}^t, \bm{X}^t  \right)=  \left(\bm{q},\bm{x}  \right)    \right]\\
              &=  1+\theta E\left[ \Delta^m Z  (\bm{q},\bm{x}  )\mid    \left(\bm{Q}^t  ,\bm{X}^t    \right)=  (\bm{q},\bm{x}  )   \right]+\sum_{l=2}^\infty \frac{\theta^l D^l}{l!}.\nonumber
        \end{align}
According to Conditions 1 and 2, we have,
    \begin{equation}\label{B4.2}
    \begin{aligned}
       & E\left[ \Delta^m Z  (\bm{q},\bm{x}  )\mid    \left(\bm{Q}^t,\bm{X}^t    \right)=  (\bm{q},\bm{x}  )   \right] \leq -\eta \mathcal{I}\left(\left(\bm{q},\bm{x}\right) \in \mathcal{B} \right) + D\mathcal{I}\left(\left(\bm{q},\bm{x}\right) \in \mathcal{B}^c \right),
    \end{aligned}
    \end{equation}
    where
    \begin{align*}
        \mathcal{B} = \left\{  \left(\bm{q}  ,\bm{x}    \right) \in   (\mathcal{Q},\mathcal{X}  ):  Z  (\bm{q},\bm{x}  ) \geq \kappa\right\}.
    \end{align*}
Since $\theta \in (0,1)$, we have,
\begin{align}\label{B4.3}
    \sum_{l=2}^\infty \frac{\theta^l D^l}{l!} = \theta^2 \sum_{l=2}^\infty \frac{\theta^{l-2} D^l}{l!}\leq \theta^2 \sum_{l=2}^\infty \frac{D^l}{l!} = \theta^2(e^D-1-D).
\end{align}
Combining (\ref{B4.1}-\ref{B4.3}), we have,
        \begin{align}\label{B4.4}
            & E \left[e^{\theta \Delta^m Z  (\bm{q},\bm{x}  )}\mid   \left(\bm{Q}^t, \bm{X}^t  \right)=  \left(\bm{q},\bm{x}  \right)    \right]\nonumber\\
              & \leq   \left[1-\theta \eta +\theta^2  \left(e^D -1-D  \right)  \right]\mathcal{I}  \left(  \left(\bm{q}, \bm{x}  \right) \in \mathcal{B}  \right)\nonumber\\
              &\quad +\left[1+\theta D +\theta^2  \left(e^D -1-D  \right) \right]\mathcal{I}  \left(  \left(\bm{q}  ,\bm{x}    \right) \in \mathcal{B}^c  \right)\nonumber\\
              & \leq \delta_1 \mathcal{I}\left(  \left(\bm{q}  ,\bm{x}   \right) \in \mathcal{B}  \right)+\delta_2 \mathcal{I}  \left(  \left(\bm{q}  ,\bm{x}    \right) \in \mathcal{B}^c  \right),
       \end{align}
       where 
       \begin{align*}
            \delta_1 = 1-\theta \eta +\theta^2  \left(e^D -1-D  \right) <1
       \end{align*}
       for sufficient small $\theta \leq \frac{1}{\eta}$ and
       \begin{align*}
            \delta_2 = 1+\theta D +\theta^2  \left(e^D -1-D  \right). 
       \end{align*}
    According to (\ref{B4.4}), we have
    \begin{align*}
        & E   \left[ e^{\theta Z  (\bm{Q}  (t+m  ),\bm{X}  (t+m  )  )}     \right] \\
        =&\sum_{  (\bm{q},\bm{x}  ) \in (\mathcal{Q},\mathcal{X}  )} E   \left[ e^{\theta \Delta^m Z  (\bm{q},\bm{x}  )} e^{\theta Z  \left(\bm{Q}^t  ,\bm{X}^t    \right)}\mid   \left(\bm{Q}^t  ,\bm{X}^t    \right)=  (\bm{q},\bm{x}  )    \right] P  \left(  \left(\bm{Q}^t  ,\bm{X}^t    \right)=  (\bm{q},\bm{x}  )  \right)\\
        = &\sum_{(\bm{q},\bm{x}  ) \in \mathcal{B}} e^{\theta Z  \left(\bm{q}  ,\bm{x}    \right)}E   \left[ e^{\theta \Delta^m Z  (\bm{Q}^t,\bm{X}^t  )} \mid   \left(\bm{Q}^t  ,\bm{X}^t    \right)=  (\bm{q},\bm{x}  )    \right] P  \left(  \left(\bm{Q}^t  ,\bm{X}^t    \right)=  (\bm{q},\bm{x}  )  \right) \\
        & +\sum_{(\bm{q},\bm{x}  ) \in \mathcal{B}^c}  e^{\theta Z  \left(\bm{q}  ,\bm{x}    \right)}E   \left[ e^{\theta \Delta^m Z  (\bm{Q}^t,\bm{X}^t  )}\mid   \left(\bm{Q}^t  ,\bm{X}^t    \right)=  (\bm{q},\bm{x}  )    \right] P  \left(  \left(\bm{Q}^t  ,\bm{X}^t    \right)=  (\bm{q},\bm{x}  )  \right) \\
        \leq & \delta_1 \sum_{  (\bm{q},\bm{x}  ) \in (\mathcal{Q},\mathcal{X}  )} e^{\theta Z  \left(\bm{q}  ,\bm{x}    \right)} P  \left(  \left(\bm{Q}^t  ,\bm{X}^t    \right)=  (\bm{q},\bm{x}  )  \right)\\
        &+ \sum_{\mathcal{B}^c}  \left(\delta_2-\delta_1  \right)e^{\theta Z  \left(\bm{Q}^t  ,\bm{X}^t    \right)} P  \left(  \left(\bm{Q}^t  ,\bm{X}^t    \right)=  (\bm{q},\bm{x}  )  \right)\\
         \leq & \delta_1  E   \left[e^{\theta Z  \left(\bm{Q}^t  ,\bm{X}^t    \right) }    \right] +   \theta(D+\eta)e^{\theta \kappa}.
    \end{align*}
    By induction, we have  $ \forall t_0 \in [0, m]$ and $l \in \mathbb{N}_+$,
    \begin{align*}
           & E   \left[ e^{\theta Z  \left(\bm{Q}^{t_0+lm} ,\bm{X}^{t_0+lm}    \right)}     \right] \leq \delta_1^l E   \left[ e^{\theta Z  \left(\bm{Q}^{t_0}  ,\bm{X}^{t_0}    \right)}     \right]+\frac{1-\delta_1^l}{1-\delta_1}  \theta\left(D+\eta \right)e^{\theta \kappa}.
    \end{align*}
    {Note that $E\left[ e^{\theta Z  \left(\bm{Q}^{t_0}  ,\bm{X}^{t_0}    \right)} \right] \leq e^{\theta \hat{D}(t_0)} <\infty$ for all $t_0 \in[0,m]$, according to Condition 3.} Since $\{t \in \mathbb{R}: t \geq 0 \} = \{t = t_0+l m: t_0 \in[0,m]\ \text{and}\ l \in \mathbb{N}_+ \}$, we have,
    \begin{align*}
        \limsup_{t \to \infty} E   \left[ e^{\theta Z  \left(\bm{Q}^t  ,\bm{X}^t    \right)}    \right] \leq \frac{  \theta\left(D+\eta \right)e^{\theta \kappa}}{1-\delta_1} \triangleq C^\star.
    \end{align*}
    
    \end{proof}

\subsection{Proof of Lemma \ref{lem: geo}} \label{Proof: lem: geo}
\begin{proof}  {of Lemma \ref{lem: geo}.  } Define the total variance difference between two distribution $\pi_1$ and $\pi_2$ on $\Omega$ as
    \begin{align}\label{eq: geo1}
        \norm{\pi_1-\pi_2} _{TV}=\frac{1}{2}\sum_{x \in \Omega}\abs{ \pi_1  (x  )-\pi_2  (x  )}.
    \end{align}
    It has been shown in \cite{levin2017markov} (Theorem 4.9 of Chapter 4, Page 52) that, for an irreducible, positive recurrent and aperiodic finite-state Markov chain  with transition probability matrix $P$ and stationary distribution $\pi$, there exist constants  $\alpha \in   (0,1  )$ and $C > 0$ such that $\forall m \in \mathbb{N}_+$,
    \begin{align}\label{eq: geo2}
        \max_{x \in \Omega}\norm{P  (x,\cdot  )-\pi}_{TV} \leq C\alpha^m.
    \end{align}
    Thus, for any initial distribution $X^0 \sim \pi^0$
    \begin{align*}
      &\abs{E \left[f(X^m)-\lambda \right]}\\
      \leq &\sum_{y \in \Omega}f  (y  )\abs{  \left(\pi^0P^m  \right)  (y  )-\pi  (y  )} \\
      \leq &\sum_{y \in \Omega}f  (y  )\sum_{x \in \Omega}\pi^0  (x  )\abs{P^m  (x,y  )-\pi  (y  )}\\
      \leq& L\sum_{y \in \Omega}\sum_{x \in \Omega}\pi^0  (x  )\abs{P^m  (x,y  )-\pi  (y  )}\\
      =&L\sum_{x \in \Omega}\pi^0  (x  )\sum_{y \in \Omega}\abs{P^m  (x,y  )-\pi  (y  )}\\
      \leq & 2L\sum_{x \in \Omega}\pi^0  (x  )\max_{x \in \Omega}\norm{P  (x,\cdot  )-\pi}_{TV}\\
      \leq & 2LC\alpha^m.
    \end{align*}
    Thus,
    \begin{align*}
        \abs{E  \left[f(X^m)-\lambda \right]}\leq 2LC\alpha^m.
    \end{align*}
\end{proof}

\subsection{Proof of Lemma \ref{lem: finite}} \label{Proof of Lemma: finite}
    \begin{proof}  {of Lemma \ref{lem: finite}.  }
    \begin{align*}
      \abs{\gamma(t)}= &\abs{  E  \left[  \left(f\left(X^t\right)-\lambda  \right)  \left(f\left(X^0\right)-\lambda  \right)  \right]  }\\
      =&\abs{  E_{X^0 \sim \pi }  \left[  \left(f\left(X^0\right)-\lambda  \right)E_{X^0 \sim \pi }  \left[  \left(f\left(X^t\right)-\lambda  \right) \right] \right]  }\\
      \stackrel{(a)}{\leq}&E_{X^0 \sim \pi }  \left[\abs{  \left(f\left(X^0\right)-\lambda  \right)E_{X^0 \sim \pi }  \left[  \left(f\left(X^t\right)-\lambda  \right)  \right]  }  \right] \\  
      =&E_{X^0 \sim \pi }  \left[\abs{f\left(X^0\right)-\lambda  } \abs{E_{X^0 \sim \pi }  \left[  \left(f\left(X^t\right)-\lambda  \right)  \right]  }  \right]\\
      \leq &2  (A_{max}+\lambda  )A_{max}C\alpha^k 
    \end{align*}
    
    where $(a)$ follows from Jensen's  and the last  follows from Lemma \ref{lem: geo}. Let
    $V_1 = \gamma(0)+2\lim_{m \to
        \infty}\sum_{t=1}^{m}\frac{m-t}{m}\gamma(t) $ and $V_2= \gamma(0)+2\lim_{m \to \infty}\sum_{t=1}^{m}\gamma(t)$,
    consider 
    \begin{align*}
            \abs{V_1-V_2}  =&\lim_{m\to \infty}\abs{\sum_{t=1}^m \frac{t}{m} \gamma(t)}\\
            =&\lim_{m\to \infty}\abs{\sum_{t=1}^m \frac{m-t}{m} \gamma(t)-\sum_{t=1}^m\gamma(t)}\\
            & \leq \lim_{m\to \infty}\sum_{t=1}^m \frac{t}{m} \abs{\gamma(t)}\\
            & \leq \lim_{m\to \infty}2(A_{max}+\lambda  )A_{max}C \sum_{t=1}^m \frac{t}{m}\alpha^t\\
            & \leq \lim_{m\to \infty}2(A_{max}+\lambda  )A_{max}C \left(\frac{\alpha-\alpha^{m+1}}{m(1-\alpha^2)} -\frac{\alpha^{m+1}}{1-\alpha} \right).
    \end{align*}
    Since $\alpha \leq 1$, we have $\abs{V_1-V_2} \leq 0$. Thus, $V_1=V_2$.
    \end{proof}
   
\section{Proof of Claim \ref{claim: interchange of limit}}\label{proof_Claim_interchange_of_limit}
{{}
We will prove the claim using dominated convergence theorem to justify a certain interchange of limit and summation.
    \begin{align*}
           \abs{ \gamma^{(\epsilon)}(t)} &=\abs{ E \left[  \left(\left(A^t\right)^{(\epsilon)}-\lambda  \right)  \left(\left(A^0\right)^{(\epsilon)}-\lambda  \right) \right]}\\
           & \leq E\left[\abs{\left( \left(A^0\right)^{(\epsilon)}-\lambda^{(\epsilon)} \right)} \abs{E \left[ \left( \left(A^t\right)^{(\epsilon)}-\lambda^{(\epsilon)} \right)\vert X^0 \right] } \right]\\
           & \leq (A_{max}+\lambda) 2A_{max}C\alpha^t.
    \end{align*}
    where the last inequality follows Lemma \ref{lem: geo}.
    Since 
     \begin{align*}
           \lim_{m\to\infty}\sum_{t=1}^m (A_{max}+\lambda) 2A_{max}C\alpha^t &= \lim_{m\to\infty}2(A_{max}+\lambda)A_{max}C\alpha\frac{1-\alpha^{m-1}}{1-\alpha}\\
           &\leq 2(A_{max}+\lambda)A_{max}C\frac{\alpha}{1-\alpha}< \infty,
    \end{align*}
    we conclude that the interchange of limit is solid, i.e.,
    \begin{align*}
           \lim_{\epsilon \to 0}\lim_{m \to \infty}\sum_{t=1}^{m}\gamma^{(\epsilon)}(t) = \lim_{m \to \infty}\sum_{t=1}^{m}\lim_{\epsilon \to 0}\gamma^{(\epsilon)}(t) = \lim_{m \to \infty}\sum_{t=1}^{m}\gamma(t).
    \end{align*}
    Therefore, $\lim_{\epsilon \to 0} \left(\sigma_a^{(\epsilon)}\right)^2 = \sigma_a^2.$}

\section{Details in proof of Theorem \ref{thm: pos rec single server queue}}
\subsection{Proof of Claim \ref{claim 3.2.1}.}\label{proof_Claim_3.2.1}
\begin{proof}{}
We first use the queue evolution equation \eqref{eq:1}  to recursively expand $Q^{t+m}$.                  
\begin{align*}
          & E  \left[\left(Q^{t+m}\right)^2-\left(Q^t\right)^2\mid   \left(Q^t,X^t  \right)=  (q,x  )   \right]\\
           \leq & E    \left[  (Q^{t+m-1}+A^{t+m-1}+S^{t+m-1}  )^2-\left(Q^t\right)^2\mid   \left(Q^t,X^t  \right)=  (q,x  )   \right]\\
           = &E   \bigg[\left(Q^{t+m-1}\right)^2+2Q^{t+m-1}  (A^{t+m-1}-S^{t+m-1}  ) \\
           &+  (A^{t+m-1}-S^{t+m-1}  )^2-\left(Q^t\right)^2  ]\mid   \left(Q^t,X^t  \right)=  (q,x  )   \bigg]\\
           = &E\Bigg  [2\left(Q^t+\sum_{i=0}^{m-2}A^{t+m-i}-\sum_{i=0}^{m-2}S^{t+m-i}
          +\sum_{i=0}^{m-2}U^{t+m-i}\right)\left(A^{t+m-1}-S^{t+m-1}\right) \\
          & +\left(Q^{t+m-1}\right)^2+  (A^{t+m-1}-S^{t+m-1}  )^2-\left(Q^t\right)^2\mid   \left(Q^t,X^t  \right)=  (q,x )\Bigg  ]\\
           \leq &E  \big[2Q^t  (A^{t+m-1}-S^{t+m-1}  )+2m  (A_{max}+S_{max}  )  (A_{max}+S_{max}  )\\
          & +  \left(Q^{t+m-1}\right)^2+(A_{max}+S_{max}  )^2-\left(Q^t\right)^2\mid   \left(Q^t,X^t  \right)=  (q,x  )  \big]\\
            = &E  \big[\left(Q^{t+m-1}\right)^2+2Q^t  (A^{t+m-1}-S^{t+m-1}  )\\
            &-\left(Q^t\right)^2+K_0  (m  )\mid   \left(Q^t,X^t  \right)=  (q,x  )  \big],
\end{align*}
where 
\begin{align*}
    K_0  (m ) = 2m  (A_{max}+S_{max}  )  (A_{max}+S_{max}  )+  (A_{max}+S_{max}  )^2.
\end{align*}
By induction we have
\begin{align*}
       & E  \left[\left(Q^{t+m}\right)^2-\left(Q^t\right)^2\mid   \left(Q^t,X^t  \right)=  (q,x  )  \right]\\
       & \leq E   \left[2Q^t   \left(\sum_{i=1}^{m}A^{t+m-i}-\sum_{i=1}^{m}S^{t+m-i}   \right)+m K_0  (m  )\mid   \left(Q^t,X^t  \right)=  (q,x  )   \right]\\
       & = E  \left [2Q^t\sum_{i=1}^{m}  (A^{t+m-i}-\lambda  )-2m\epsilon Q^t+mK_0  (m  )\mid   \left(Q^t,X^t  \right)=  (q,x  )   \right].
\end{align*}
\end{proof}

\subsection{Proof of Claim \ref{claim: q^2 finite}}\label{proof q^2 finite}
Fix $\epsilon$, let's take $Z(Q^t,X^t) = Q^t$ as the test function, 
\begin{align*}
        \Delta^{m(\epsilon)} Z(q,x)  &= \left(Q^{t+m(\epsilon)}-Q^t\right)\mathcal{I}(Q^t = q) \\
         & = \left(\sqrt{\left(Q^{ t+m(\epsilon)}\right)^2}-\sqrt{\left(Q^{ t}\right)^2}\right)\mathcal{I}(Q^t = q)\\
         &  \leq \frac{1}{2Q^{t}} \left(\left(Q^{ t+m(\epsilon)}\right)^2- \left(Q^{ t}\right)^2\right)\mathcal{I}(Q^t = q)
\end{align*}
where the  follows from the fact that $f(x)= \sqrt{x}$ is a concave function for $x\geq 0$ so that $f(y)-f(x) \leq (y-x)f'(x) = \frac{y-x}{2\sqrt{x}}$ with $y = \left(Q^{t+m(\epsilon)}\right)^2$ and $x = \left(Q^{t}\right)^2$.  
Therefore, 
\begin{align*}
    &E  \left[\Delta^{m(\epsilon)} Z(q,x)\mid   \left(Q^t,X^t  \right)=  (q,x  )  \right]\\
       &\leq E  \left[\frac{1}{2Q^{t}} \left(\left(Q^{ t+m(\epsilon)}\right)^2- \left(Q^{ t}\right)^2\right)\mid   \left(Q^t,X^t  \right)=  (q,x  )  \right]\\
       &\leq -K_1(\epsilon) +\frac{1}{{2Q^{t}}}m(\epsilon) K_0  (m(\epsilon)  ),
\end{align*}
Let ${\kappa(m(\epsilon))} =  \frac{m(\epsilon) K_0  (m(\epsilon)  )}{K_1({\epsilon})}$ and $\eta = \frac{ K_1(\epsilon)}{2}$. Then, we have
 for all $  (q,x) \in   (\mathcal{Q},\mathcal{X}  )$ with $Z  (q,x  ) \geq {\kappa(m(\epsilon))}$,
        \begin{align*}
            E  \left[\Delta^m Z  (q, x  )\mid   \left(Q^t,X^t  \right) =   (q,x  )  \right] \leq -{\eta(m(\epsilon))}.
        \end{align*}
Moreover, we have $P\left(\Delta^{m(\epsilon)} Z(Q,X) \leq {m(\epsilon)}(A_{max}+S_{max})\right)=1$. Using Lemma \ref{Lemma: $m$-step bound for q_perp_norm_2}, we conclude that 
\begin{align*}
    E  \left[  \left(\overline{Q}^{  (\epsilon  )}  \right)^2  \right] < \infty.
\end{align*} 

\subsection{Proof of Claim \ref{claim: 3.2.2}}\label{proof Claim 3.2.2}
\begin{proof}{}
We will prove the claim by bounding the terms on the RHS of (\ref{eq: qe}).     Firstly, setting drift of $Q^t$ to zero in steady-state, we have $\forall i \in \mathbb{N}_+$
    \begin{align}\label{ E(u)=epsilon}
     E  \left[U^{t+i}  \right]=E  \left(Q^{t+i+1}-Q^{t+i}+S^{t+i}-A^{t+i}  \right)=\mu-\lambda=\epsilon.
    \end{align}
    Thus,
    \begin{align}
       -\epsilon {{}S_{max}}= -  (\mu-\lambda  )S_{max}\leq E  \left[-U^t S^t  \right]\leq E  \left[-\left(U^t\right)^2  \right] \leq 0. \label{eq:u2}
    \end{align}
    This bounds one of the terms in (\ref{eq: qe}). Also note that this implies the following bound, which we will use shortly. 
    \begin{align}
      \abs{\sum_{i=1}^{m}E  [2U^{t+m-i}  \left(A^{t+m}-\lambda \right )  ]}\leq 2m  (A_{max}+\lambda  ) \epsilon. \label{eq:qa-s}        
    \end{align}

Now we will bound the first term on the RHS of (\ref{eq: qe}) which is the most challenging term. 
Notice that, 
\begin{align*}
    E  \left[2Q^t  (A^t-\lambda  )  \right] \leq 2(A_{max}+\lambda) E  \left[Q^t\right] < \infty.
\end{align*}
Since we are in the steady state, we have $\forall m \in \mathbb{N}_+$, 
   \begin{align}
        E  \left[2Q^t  (A^t-\lambda  )  \right] = E  \left[2Q^{t+m}  \left(A^{t+m}-\lambda \right )  \right].
   \end{align}
    According to (\ref{eq:1}), we have 
    \begin{align}
            & E  \left[2Q^{t+m}  \left(A^{t+m}-\lambda \right )  \right] \nonumber\\
            =& E  \left[2  (Q^{t+m-1}+A^{t+m-1}-S^{t+m-1}+U^{t+m-1}  )  \left(A^{t+m}-\lambda \right )  \right]\\
            =& E  \left[2Q^{t+m-1}  \left(A^{t+m}-\lambda \right )  \right]+E  \left[2  (A^{t+m-1}-\lambda  )  \left(A^{t+m}-\lambda \right )  \right]\nonumber\\
            &+E  \left[2U^{t+m-1}  \left(A^{t+m}-\lambda \right )  \right]\nonumber,
    \end{align}
    where the last equality uses the independence of $S^{t+m-1}$ and $A^{t+m}$. By repeating these steps recursively, we have, $\forall m \in \mathbb{N}_+$,
    \begin{align}\label{eq: proof claim 3.2.2}
        & E  \left[2Q^t  (A^t-\lambda  )  \right] \nonumber\\
        =& E  \left[2Q^t  \left(A^{t+m}-\lambda \right )  \right] + 2\sum_{i=1}^{m}\gamma(i)+ \sum_{i=1}^{m}E  \left[2U^{t+m-i}  \left(A^{t+m}-\lambda \right )  \right],
    \end{align}
    where
    \begin{align*}
    \gamma(i) = E  \left[  \left(A^{t+m-i}-\lambda  \right)  \left(A^{t+m}-\lambda  \right)  \right]
        = E  \left[  \left(A^{t+i}-\lambda  \right)  \left(A^{t}-\lambda  \right)  \right].       
    \end{align*}

   According to Lemma \ref{lem: geo},
   \begin{align*}
         &\abs{ E  \left[2Q^t  \left(A^{t+m}-\lambda \right )  \right]} \leq E  \left[2Q^t \abs{ E  \left[  \left(A^{t+m}-\lambda  \right)\mid   (X^t,Q^t  )   \right]}  \right] \leq 4A_{max}C\alpha^m E  [Q^t  ].     
   \end{align*}
   Thus, we have from (\ref{eq: proof claim 3.2.2}) and (\ref{eq:qa-s}),
    \begin{align*}
        &-4A_{max}C\alpha^m E  [2Q^t  ] - 2m  (A_{max}+\lambda  ) \epsilon+2\sum_{i=1}^{m}\gamma(i)\\
        \leq& E  [2Q^t  (A^t-\lambda  )  ] \\
        \leq&  4A_{max}C\alpha^m E  [2Q^t  ] + 2m  (A_{max}+\lambda  ) \epsilon+2\sum_{i=1}^{m}\gamma(i).
    \end{align*}
    Now, substitute this and (\ref{eq:u2}) in (\ref{eq: qe}), we have, $\forall m \in \mathbb{N}_+$,
    \begin{align*}
             2\left(1-2A_{max}C\frac{\alpha^m}{\epsilon}\right) E\left[\epsilon Q^t\right] &
            \leq \gamma(0) + 2\sum_{i=1}^{m}\gamma(i)  + \sigma_s^2 + 2m  (A_{max}+\lambda  ) \epsilon +   \epsilon^2 \\
             2\left(1+2A_{max}C\frac{\alpha^m}{\epsilon}\right) E\left[\epsilon Q^t\right] &
             \geq \gamma(0) + 2\sum_{i=1}^{m}\gamma(i)   + \sigma_s^2- 2m  (A_{max}+\lambda  )\epsilon -  S_{max}\epsilon +   \epsilon^2.
    \end{align*}    
\end{proof}
\subsection{Proof of Claim \ref{claim: 3.2.3}}\label{proof: claim 3.2.3}
\begin{proof}{}
We will prove the claim using dominated convergence theorem to justify a certain interchange of limit and summation. Towards this end, note that
        for any $ \epsilon>0$ and an integer $M \geq \lceil \frac{1}{\sqrt{\epsilon}} \rceil$, since $X^0 \stackrel{d}{=}\overline{X}^{  (\epsilon  )}$, we have,
    \begin{align*}
           &  \Bigg\vert\sum_{i=1}^{M} E\left[ \left( \left(A^i\right)^{(\epsilon)}-\lambda^{(\epsilon)} \right) \left( \left(A^0\right)^{(\epsilon)}-\lambda^{(\epsilon)} \right) \right] - \sum_{i=1}^{\left\lfloor \frac{1}{\sqrt{\epsilon}} \right\rfloor } E\left[ \left( \left(A^i\right)^{(\epsilon)}-\lambda^{(\epsilon)} \right) \left( \left(A^0\right)^{(\epsilon)}-\lambda^{(\epsilon)} \right)\right]\Bigg\vert\\
           & = \Bigg\vert\sum_{i=\lceil \frac{1}{\sqrt{\epsilon}} \rceil}^{ M} E\left[ \left( \left(A^i\right)^{(\epsilon)}-\lambda^{(\epsilon)} \right) \left( \left(A^0\right)^{(\epsilon)}-\lambda^{(\epsilon)} \right) \right]\Bigg\vert\\
           & = \Bigg\vert\sum_{i=\left\lceil \frac{1}{\sqrt{\epsilon}} \right\rceil}^{ M} E\left[\left( \left(A^0\right)^{(\epsilon)}-\lambda^{(\epsilon)} \right) E \left[ \left( \left(A^i\right)^{(\epsilon)}-\lambda^{(\epsilon)} \right) \right] \right]\Bigg\vert\\
           & \leq \sum_{i=\lceil \frac{1}{\sqrt{\epsilon}} \rceil}^{ M} E\left[\abs{\left( \left(A^0\right)^{(\epsilon)}-\lambda^{(\epsilon)} \right)} \abs{E \left[ \left( \left(A^i\right)^{(\epsilon)}-\lambda^{(\epsilon)} \right) \right]} \right]\\
            & \stackrel{(a)}\leq \sum_{i=\lceil \frac{1}{\sqrt{\epsilon}} \rceil}^{ M} (A_{max}+\lambda) 2A_{max}C\alpha^t\\
           & \leq 2A_{max}C(A_{max}+\lambda)\frac{\alpha^{\frac{1}{\sqrt{\epsilon}}}}{1-\alpha}.
    \end{align*}
    where we used Lemma \ref{lem: geo} to get ($a$).
    Let $M \to \infty$, for any  $ \epsilon>0$, since $X^0 \stackrel{d}{=}\overline{X}^{  (\epsilon  )}$, we have,
    \begin{align}\label{eq: proof claim 3.2.3,1}
            & \sum_{i=1}^{\lfloor \frac{1}{\sqrt{\epsilon}} \rfloor } E\left[ \left( \left(A^i\right)^{(\epsilon)}-\lambda^{(\epsilon)} \right) \left( \left(A^0\right)^{(\epsilon)}-\lambda^{(\epsilon)} \right)\right] \\
            &\geq 
            \lim_{M \to \infty}\sum_{i=1}^{M} E\left[ \left( \left(A^i\right)^{(\epsilon)}-\lambda^{(\epsilon)} \right) \left( \left(A^0\right)^{(\epsilon)}-\lambda^{(\epsilon)} \right) \right] - 2A_{max}C(A_{max}+\lambda)\frac{\alpha^{\frac{1}{\sqrt{\epsilon}}}}{1-\alpha}\nonumber
    \end{align}
    and 
    \begin{align}\label{eq: proof claim 3.2.3,2}
            &  \sum_{i=1}^{\left\lfloor \frac{1}{\sqrt{\epsilon}} \right\rfloor } E\left[ \left( \left(A^i\right)^{(\epsilon)}-\lambda^{(\epsilon)} \right) \left( \left(A^0\right)^{(\epsilon)}-\lambda^{(\epsilon)} \right) \right]\\ 
            &\leq 
            \lim_{M \to \infty} \sum_{i=1}^{M} E\left[ \left( \left(A^i\right)^{(\epsilon)}-\lambda^{(\epsilon)} \right) \left( \left(A^0\right)^{(\epsilon)}-\lambda^{(\epsilon)} \right) \right]+ 2A_{max}C(A_{max}+\lambda)\frac{\alpha^{\frac{1}{\sqrt{\epsilon}}}}{1-\alpha}.\nonumber
    \end{align}
    Let $\epsilon \to 0$ and combine (\ref{eq: proof claim 3.2.3,1}) and (\ref{eq: proof claim 3.2.3,2}) we have
    \begin{align*}
       &\lim_{\epsilon \to 0}\sum_{i=1}^{\left\lfloor \frac{1}{\sqrt{\epsilon}} \right\rfloor } E\left[ \left( \left(A^i\right)^{(\epsilon)}-\lambda^{(\epsilon)} \right) \left( \left(A^0\right)^{(\epsilon)}-\lambda^{(\epsilon)} \right) \right] \\
       =& 
        \lim_{\epsilon \to 0} \lim_{M \to \infty} \sum_{i=1}^{M} E\left[ \left( \left(A^i\right)^{(\epsilon)}-\lambda^{(\epsilon)} \right) \left( \left(A^0\right)^{(\epsilon)}-\lambda^{(\epsilon)} \right) \right].
    \end{align*}
    Notice that 
    \begin{align*}
        &\lim_{M \to \infty}\sum_{i=1}^{M} \abs{E\left[ \left( \left(A^i\right)^{(\epsilon)}-\lambda^{(\epsilon)} \right) \left( \left(A^0\right)^{(\epsilon)}-\lambda^{(\epsilon)} \right) \right]} \leq 
        2A_{max}C(A_{max}+\lambda)\alpha\frac{1}{1-\alpha}.        
    \end{align*}
    It follows from Lebesgue's dominated convergence theorem that
    \begin{align}\label{eq: proof claim3.2.3,3}
            & \lim_{\epsilon \to 0} \lim_{M \to \infty} \sum_{i=1}^{M} E\left[ \left( \left(A^i\right)^{(\epsilon)}-\lambda^{(\epsilon)} \right) \left( \left(A^0\right)^{(\epsilon)}-\lambda^{(\epsilon)} \right) \right]\nonumber\\
            & =  
            \lim_{M \to \infty} \sum_{i=1}^{M} \lim_{\epsilon \to 0} E\left[ \left( \left(A^i\right)^{(\epsilon)}-\lambda^{(\epsilon)} \right) \left( \left(A^0\right)^{(\epsilon)}-\lambda^{(\epsilon)} \right) \right].  
    \end{align}
    According to the weak convergence of the underlying Markov chain (Eq \ref{eq: gamma to gamma}), we have
    \begin{align}\label{eq: proof claim3.2.3,4}
            &\lim_{\epsilon \to 0} E\left[ \left( \left(A^i\right)^{(\epsilon)}-\lambda^{(\epsilon)} \right) \left( \left(A^0\right)^{(\epsilon)}-\lambda^{(\epsilon)} \right)\right] 
            = \gamma(i).
    \end{align}
    Thus, combining (\ref{eq: proof claim3.2.3,3}) and (\ref{eq: proof claim3.2.3,4}), we have
    \begin{align*}
            \lim_{m(\epsilon) \to \infty}\sum_{i=1}^{m(\epsilon)}\gamma^{(\epsilon)}(i) 
            = \lim_{M \to \infty} \sum_{i=1}^{M} \gamma(i).    
    \end{align*}
\end{proof}

\section{Proof of Claim \ref{claim: 4.2.1}}\label{Proof of claim: 4.2.1}
\begin{proof}{}
Similar to the proof for Theorem \ref{thm: pos rec single server queue}, we consider the $m$-stip drift of the Lyapunov function, $V(\cdot)$. For any $m \in \mathbb{N}_+$,
        \begin{align}\label{eq Detailed proof of Claim5}
                &E  \left[\Delta^m V  (\bm{q},\bm{x}  )\mid   \left(\bm{Q}^t  ,\bm{X}^t    \right) =   (\bm{q},\bm{x}  )  \right]\nonumber\\
               \leq &  E  \Bigg[\sum_{l=1}^m\big( 2
               \left\langle \bm{Q} ^{t+m-l },\bm{A}^{t+m-1}  -\bm{\lambda} \right\rangle + \norm{\bm{A}^{t+m-l}  -\bm{S}^{t+m-l}  }^2\nonumber\\
               &+ 2\langle \bm{Q} ^{t+m-l },\bm{\lambda}-\bm{S} ^{t+m-l}   \rangle\big) \mid   \left(\bm{Q}^t  ,\bm{X}^t    \right) =   (\bm{q},\bm{x}  )  \Bigg].
        \end{align}
        The first term in the RHS of \eqref{eq Detailed proof of Claim5} can be simplified as follows,
        \begin{align*}
                &E  \left[ 
               \left\langle \bm{Q} ^{t+m-l },\bm{A}^{t+m-1}  -\bm{\lambda} \right\rangle\mid   \left(\bm{Q}^t  ,\bm{X}^t    \right) =   (\bm{q},\bm{x}  )  \right]\\
                =& E  \left[ \left\langle \bm{Q}^t  +\sum_{i=0}^{m-l}  \left(\bm{A}^{t+i}  -\bm{S}^{ t+i} +\bm{U}^{t+i}    \right),\bm{A}^{t+m-1}  -\bm{\lambda} \right\rangle\mid   \left(\bm{Q}^t  ,\bm{X}^t    \right) =   (\bm{q},\bm{x}  )  \right]\\
                \leq&  \left \langle \bm{q}  ,E  \left[\bm{A}^{t+m-l}  -\bm{\lambda}\mid   \left(\bm{Q}^t  ,\bm{X}^t    \right) =   (\bm{q},\bm{x}  )  \right] \right\rangle+   mN^2  (A_{max}+S_{max}  )  (A_{max}+\lambda_{max}  ) \\
                \stackrel{(a)}\leq& \norm{ \bm{q}}  \norm{E  \left[\bm{A}^{t+m-l}  -\bm{\lambda}\mid   \left(\bm{Q}^t  ,\bm{X}^t    \right] =   (\bm{q},\bm{x}  )  \right]} +   mN^2  (A_{max}+S_{max}  )  (A_{max}+\lambda_{max}  ) \\
                \stackrel{(b)}\leq &2NA_{max}C_{max}\alpha_{max}^{m-l}\norm{\bm{q}}+   mN^2  (A_{max}+S_{max}  )  (A_{max}+\lambda_{max}  ),
        \end{align*}
        where (a) follows from Cauchy–Schwarz Inequality and (b) follows from Lemma \ref{lem: geo}. 
        The second term in the RHS of \eqref{eq Detailed proof of Claim5} can be bounded from above as follows,
        \begin{align*}
                &E  \left[\norm{\bm{A}^{t+m-l}  -\bm{S}^{t+m-l}  }^2\mid   \left(\bm{Q}^t  ,\bm{X}^t    \right) =   (\bm{q},\bm{x}  )  \right] \leq N^2(A_{max}+S_{max})^2.
        \end{align*}
        The last term in the RHS of \eqref{eq Detailed proof of Claim5}  can be written as,
        \begin{align*}
                   & E  \left[\langle \bm{Q} ^{t+m-l },\bm{\lambda}-\bm{S} ^{t+m-l}   \rangle \mid   \left(\bm{Q}^t  ,\bm{X}^t    \right) =   (\bm{q},\bm{x}  )  \right]\\ 
                    =& E  \bigg[ E  \big[\langle\bm{Q} ^{t+m-l },\bm{\lambda}-\bm{S} ^{t+m-l}  \rangle \mid   \left(\bm{Q} ^{t+m-l} ,\bm{X}^{t+m-l}  \right)  \big]\mid   \left(\bm{Q}^t  ,\bm{X}^t    \right) =   (\bm{q},\bm{x}  ) \bigg]\\
                   \stackrel{(a)}\leq & E  \left[\min_{\bm{r}\in  \mathcal{C}}\langle\bm{Q} ^{t+m-l },\bm{\lambda}-\bm{r}\rangle\mid   \left(\bm{Q}^t  ,\bm{X}^t    \right) =   (\bm{q},\bm{x}  )  \right]\\
                    \stackrel{(b)}\leq&  E  \left[\langle\bm{Q} ^{t+m-l },\bm{\lambda}-\left(\bm{\lambda}+\epsilon  \bm{1} \right)\rangle\mid   \left(\bm{Q}^t  ,\bm{X}^t    \right) =   (\bm{q},\bm{x}  )  \right] \\
                    =&-  \epsilon E  \left[\left\langle\bm{Q}^t  +\sum_{i=0}^{m-l}  \left(\bm{A}^{t+i}  -\bm{S}^{ t+i} +\bm{U}^{t+i}    \right), \bm{1}  )\right\rangle \mid   \left(\bm{Q}^t  ,\bm{X}^t    \right) =   (\bm{q},\bm{x}  )  \right],\\
                  \leq &- \epsilon {\norm{\bm{q}}_1}
                  +\epsilon mN^2(A_{max}+S_{max})\\
                  \leq & - \epsilon \norm{\bm{q}  }
                  +\epsilon mN^2(A_{max}+S_{max}),     
        \end{align*}
        where (a) follows from MaxWeight scheduling.
        Since $\lambda \in int(\mathcal{C})$, there exist a positive number $\epsilon $  such that $\bm{\lambda}+\epsilon  \bm{1} \in int(\mathcal{C})$. This gives (b). The last inequality comes from that for any vector $\bm{x}$, its $l_1$ norm is no less than its  $l_2$ norm.
        According to the discussion above, we have
        \begin{align*}
               E  \left[\Delta^m V  (\bm{q},\bm{x}  )\mid   \left(\bm{Q}^t  ,\bm{X}^t    \right) =   (\bm{q},\bm{x}  )  \right]& \leq \frac{K_2(m) }{2} +\left(2NA_{max}C_{max}\frac{1-\alpha_{max}^m}{1-\alpha_{max}}-m\epsilon \right)\norm{\bm{q}}\\
               & \leq \frac{K_2  (m(\epsilon )  )}{2} -\frac{m(\epsilon )\epsilon }{2}\norm{\bm{q}}
        \end{align*}
        where, 
        \begin{align*}
        m(\epsilon ) =  \min \left\{m \in \mathbb{N}_+\mid 2NA_{max}C_{max}\frac{1-\alpha_{max}^m}{1-\alpha_{max}} < \frac{m\epsilon }{2} \right\}
        \end{align*}
        and
        \begin{align*}
            K_2  (m(\epsilon )  ) &=  m(\epsilon )N^2  (A_{max}+S_{max}  )^2+2m(\epsilon )^2 N^2  (A_{max}+S_{max})\left(\epsilon +A_{max}+\lambda_{max} \right)
        \end{align*}
\end{proof}

\section{Details in Proof of Proposition \ref{prop: upper bound of q_perp}}\label{proof: prop1}
\subsection{Proof of Claim \ref{claim: 4.3.1}}\label{proof of claim 4.3.1}
\begin{proof}{}
The detailed proof of Claim \ref{claim: 4.3.1} is shown as follows.
    \begin{align*}
           \abs{ \Delta^m W_{\perp\mathcal{K}}  (\bm{q},\bm{x}  )}   &= \abs{  \left[W_{\perp\mathcal{K}}  \left(\bm{Q}^{t+m},\bm{x}^{t+m}   \right)-W_{\perp\mathcal{K}}  \left(\bm{Q}^t ,\bm{X}^t   \right)  \right]} \mathcal{I}  \left(\left(\bm{Q}^t  ,\bm{X}^t   \right ) =   (\bm{q},\bm{x}  )  \right)\\
           &\stackrel{(a)}{\leq}\norm{\bm{Q}_{\perp\mathcal{K}}^{t+m}-\bm{Q}_{\perp\mathcal{K}}^{t}}\\
           & \stackrel{(b)}{\leq} \norm{\bm{Q}^{t+m}-\bm{Q}^{t}}\\
           & = \sqrt{\sum_{ij}  \left(Q_{ij}^{t+m}-Q_{ij}^{t}  \right)^2}\\
           &  \stackrel{(c)}{\leq}\sqrt{\sum_{ij}  (m\left(A_{max}+S_{max}  \right) )^2}\\
           & = Nm\left(A_{max}+S_{max}  \right).
    \end{align*}
    where ($a$) follows from the triangle  and  ($b$) follows from the contraction property of projection.  ($c$) is valid because
        \begin{align*}
            \abs{Q_{ij}^{t+m}-Q_{ij}^{t}}=\abs{\sum_{t=1}^m A^t-S^t+U^t}\leq  m\left(A_{max}+S_{max}  \right).
        \end{align*}
\end{proof}
\subsection{Proof of Claim \ref{claim: 4.3.2}}\label{proof of claim 4.3.2}
\begin{proof}{}
We will use Lemma \ref{lemma: bound W_perp} to bound the $m$-step drift $\Delta^m W_{\perp\mathcal{K}}(\bm{q}  ) $. To this end, we will first bound the drift    $\Delta^m V  (\bm{q},\bm{x}  )$ and then bound $\Delta^m V _{\parallel\mathcal{K}}  (\bm{q},\bm{x} )$. 
Forst, consider the drift $\Delta^m V _{\parallel\mathcal{K}}  (\bm{q},\bm{x} )$:
    \begin{align}\label{eq: proof claim 4.3.2 1}
           &E  \left[\Delta^m V  (\bm{q},\bm{x}  )\mid   \left(\bm{Q}^t  ,\bm{X}^t    \right) =   (\bm{q},\bm{x}  )  \right]\nonumber\\
           = &E  \left[\norm{\bm{Q}^{t+m}}^2-\norm{\bm{Q}^t }^2\mid   \left(\bm{Q}^t  ,\bm{X}^t    \right) =   (\bm{q},\bm{x}  )  \right]\nonumber\\
           \leq &E  \left[\norm{\bm{Q}^{t+m-1}+\bm{A}^{t+m-1} -\bm{S}^{t+m-1}}^2-\norm{\bm{Q}^t }^2\mid   \left(\bm{Q}^t  ,\bm{X}^t    \right) =   (\bm{q},\bm{x}  )  \right]\nonumber\\
            = & E  \bigg[\norm{\bm{Q}^{t+m-1}}^2-\norm{\bm{Q}^t  }^2+2\left\langle \bm{Q}^{t+m-1}  , \bm{A}^{t+m-1} -\bm{S}^{t+m-1}  \right\rangle \nonumber\\
            &+ \norm{\bm{A}^{t+m-1}  -\bm{S}^{t+m-1}}^2\mid   \left(\bm{Q}^t  ,\bm{X}^t    \right) =   (\bm{q},\bm{x}  )  \bigg]\nonumber\\
           = & E  \left[\sum_{l=1}^m 2\left\langle \bm{Q}^{t+m-l}  ,\bm{A}^{t+m-1}-\bm{\lambda} \right\rangle\mid   \left(\bm{Q}^t  ,\bm{X}^t    \right) =   (\bm{q},\bm{x}  )  \right]\nonumber\\
           &+E  \left[\sum_{l=1}^m \norm{\bm{A}^{t+m-l}-\bm{S}^{t+m-l}}^2\mid   \left(\bm{Q}^t  ,\bm{X}^t    \right) =   (\bm{q},\bm{x}  )  \right]\nonumber\\
           & +\sum_{l=1}^m E  \left[2\left\langle \bm{Q} ^{t+m-l },\bm{\lambda}-\bm{S}^{t+m-l}   \right\rangle \mid   \left(\bm{Q}^t  ,\bm{X}^t    \right) =   (\bm{q},\bm{x}  )  \right],
    \end{align}
    where the last equation follows from repeating the previous step inductively. 
    By tower property, the last term in (\ref{eq: proof claim 4.3.2 1}) can be written as 
    {\small \begin{align}\label{eq: proof claim 4.3.2 2}
           &\sum_{l=1}^m E  \left[2\left\langle\bm{Q} ^{t+m-l },\bm{\lambda}-\bm{S} ^{t+m-l}  \right\rangle \mid   \left(\bm{Q}^t  ,\bm{X}^t    \right) =   (\bm{q},\bm{x}  )  \right]\nonumber\\
           & = \sum_{l=1}^m E  \bigg[E  \bigg[2\left\langle\bm{Q} ^{t+m-l },  (1-\epsilon  )\bm{v}-\bm{S} ^{t+m-l}  \right\rangle\mid \left(\bm{Q} ^{t+m-l} ,\bm{X}^{t+m-l}  \right)  \bigg]\mid   \left(\bm{Q}^t  ,\bm{X}^t    \right) =   (\bm{q},\bm{x}  )  \bigg]\nonumber\\
           & = \sum_{l=1}^m -2\epsilon E  \left[\left\langle\bm{Q} ^{t+m-l },\bm{v}\right\rangle\mid   \left(\bm{Q}^t  ,\bm{X}^t    \right) =   (\bm{q},\bm{x}  )  \right]\nonumber\\
           & +\sum_{l=1}^m E  \bigg[E  \bigg[2\left\langle\bm{Q} ^{t+m-l },\bm{v}-\bm{S} ^{t+m-l}  \right\rangle\mid \left(\bm{Q} ^{t+m-l} ,\bm{X}^{t+m-l}  \right)=  (\bm{q}',\bm{x}'  )  \bigg]\mid   \left(\bm{Q}^t  ,\bm{X}^t    \right) =   (\bm{q},\bm{x}  )  \bigg].
    \end{align}}
    Since we use MaxWeight scheduling, using  Lemma \ref{lemma: Maxweight in C}, we have
    \begin{align}\label{eq: proof claim 4.3.2 3}
          & E  \bigg[2\left\langle\bm{Q} ^{t+m-l },\bm{v}-\bm{S} ^{t+m-l}  \right\rangle\mid \left(\bm{Q} ^{t+m-l} ,\bm{X}^{t+m-l}  \right)=  (\bm{q}',\bm{x}'  )  \bigg]\nonumber\\
          & \leq  E  \bigg[2\left\langle\bm{Q} ^{t+m-l },\bm{v}-\left(\bm{v} +\frac{v_{\min}}{\norm{\bm{q}_{\perp\mathcal{K}}}}\bm{q}_{\perp\mathcal{K}} )\right)  \right\rangle\mid \left(\bm{Q} ^{t+m-l} ,\bm{X}^{t+m-l}  \right)=  (\bm{q}',\bm{x}'  )  \bigg]\nonumber\\
          & \leq -2v_{\min}\norm{\bm{Q}_{\perp\mathcal{K}}^{t+m-l}  }\nonumber\\
          & \leq -2v_{\min}\norm{\bm{Q}_{\perp\mathcal{K}}^t  }+2v_{\min}\norm{\bm{Q}_{\perp\mathcal{K}}^{t+m-l}  -\bm{Q}_{\perp\mathcal{K}}^t  }\nonumber\\
          & \leq -2v_{\min}\norm{\bm{Q}_{\perp\mathcal{K}}^t  }+2v_{\min}\norm{\bm{Q} ^{t+m-l }-\bm{Q}^t  }\nonumber\\
          & \leq -2v_{\min}\norm{\bm{Q}_{\perp\mathcal{K}}^t  }+2v_{min}N m \left(A_{max}+S_{max}  \right).
    \end{align}
    Combining  (\ref{eq: proof claim 4.3.2 1}-\ref{eq: proof claim 4.3.2 3}), we have
    \begin{align}\label{eq: V}
            & E  \left[\Delta^m V  (\bm{q},\bm{x}  )\mid   \left(\bm{Q}^t  ,\bm{X}^t    \right) =   (\bm{q},\bm{x}  )  \right]\nonumber\\
             \leq& E  \left[\sum_{l=1}^m 2\left\langle\bm{Q} ^{t+m-l },\bm{A}^{t+m-l}  -\bm{\lambda}\right\rangle\mid   \left(\bm{Q}^t  ,\bm{X}^t    \right) =   (\bm{q},\bm{x}  )  \right] \nonumber\\
             &+E  \left[\sum_{l=1}^m \norm{\bm{A}^{t+m-l}  -\bm{S}^{t+m-l}  }^2\mid   \left(\bm{Q}^t  ,\bm{X}^t    \right) =   (\bm{q},\bm{x}  )  \right]+2Nm^2v_{min}\left(A_{max}+S_{max}  \right)\nonumber\\            
            & + \sum_{l=1}^m -2\epsilon E  [\left\langle\bm{Q} ^{t+m-l },\bm{v}\right\rangle\mid   \left(\bm{Q}^t  ,\bm{X}^t    \right) =   (\bm{q},\bm{x}  )  ]-2m v_{\min}\norm{\bm{Q}_{\perp\mathcal{K}}  }\nonumber\\
             \leq& E  [\sum_{l=1}^m 2\left\langle\bm{Q} ^{t+m-l },\bm{A}^{t+m-l}  -\bm{\lambda}\right\rangle\mid   \left(\bm{Q}^t  ,\bm{X}^t    \right) =   (\bm{q},\bm{x}  )  ]\nonumber\\
             &+ \sum_{l=1}^m -2\epsilon E  [\left\langle\bm{Q} ^{t+m-l },\bm{v}\right\rangle\mid   \left(\bm{Q}^t  ,\bm{X}^t    \right) =   (\bm{q},\bm{x}  )  ]\nonumber\\
            & +2Nm^2v_{min}\left(A_{max}+S_{max}  \right)+N^2m  (A_{max}+S_{max}  )^2-2m v_{\min}\norm{\bm{Q}_{\perp\mathcal{K}}  }.
    \end{align}
    Next, we will bound the drift of $\Delta^m V_{\parallel\mathcal{K}}  (\bm{q} ,\bm{x} )$.
    \begin{align}\label{eq: proof claim 4.3.2 6}
           &E  \left[\Delta^m V _{\parallel\mathcal{K}}  (\bm{q},\bm{x} )\mid   \left(\bm{Q}^t  ,\bm{X}^t    \right) =   (\bm{q},\bm{x}  )  \right]\nonumber\\
           & = E  \left[\norm{\bm{Q}_{\parallel\mathcal{K}}^{t+m}  }^2-\norm{\bm{Q}_{\parallel\mathcal{K}}^{t}  }^2\mid   \left(\bm{Q}^t  ,\bm{X}^t    \right) =   (\bm{q},\bm{x}  )  \right]\nonumber\\
           & = E  \left[\norm{\bm{Q}_{\parallel\mathcal{K}}^{t+m}  -\bm{Q}_{\parallel\mathcal{K}}^{t+m-1}}^2\mid   \left(\bm{Q}^t  ,\bm{X}^t    \right) =   (\bm{q},\bm{x}  )  \right]\nonumber\\
           &+E  \left[2\left\langle \bm{Q}_{\parallel\mathcal{K}}^{t+m-1}, \bm{Q}_{\parallel\mathcal{K}}^{t+m}  -\bm{Q}_{\parallel\mathcal{K}}^{t+m-1}\right\rangle\mid   \left(\bm{Q}^t  ,\bm{X}^t    \right) =   (\bm{q},\bm{x}  )  \right]\nonumber\\
           & +E  \left[\norm{\bm{Q}_{\parallel\mathcal{K}}^{t+m-1}}^2-\norm{\bm{Q}_{\parallel\mathcal{K}}^{t}  }^2\mid   \left(\bm{Q}^t  ,\bm{X}^t    \right) =   (\bm{q},\bm{x}  )  \right]\nonumber\\
           &\geq E  \left[2\left\langle \bm{Q}_{\parallel\mathcal{K}}^{t+m-1}, \bm{Q}_{\parallel\mathcal{K}}^{t+m}  -\bm{Q}_{\parallel\mathcal{K}}^{t+m-1}\right\rangle\mid   \left(\bm{Q}^t  ,\bm{X}^t    \right) =   (\bm{q},\bm{x}  )  \right]\nonumber\\
           &+E  \left[\norm{\bm{Q}_{\parallel\mathcal{K}}^{t+m-1}}^2-\norm{\bm{Q}_{\parallel\mathcal{K}}^{t}  }^2\mid   \left(\bm{Q}^t  ,\bm{X}^t    \right) =   (\bm{q},\bm{x}  )  \right]
    \end{align}
    The first term can be lower bounded as follows:
    \begin{align}\label{eq: proof claim 4.3.2 5}
            &E  \left[2\left\langle \bm{Q}_{\parallel\mathcal{K}}^{t+m-1}, \bm{Q}_{\parallel\mathcal{K}}^{t+m}  -\bm{Q}_{\parallel\mathcal{K}}^{t+m-1}\right\rangle\mid   \left(\bm{Q}^t  ,\bm{X}^t    \right) =   (\bm{q},\bm{x}  )  \right]\nonumber\\ 
            =&\bigg[2\left\langle \bm{Q}_{\parallel\mathcal{K}}^{t+m-1}, \bm{Q}^{t+m}-\bm{Q}_{\perp\mathcal{K}}^{t+m}  -\left(\bm{Q}^{t+m-1}-\bm{Q}_{\parallel\mathcal{K}}^{t+m-1}\right)\right\rangle\mid   \left(\bm{Q}^t  ,\bm{X}^t    \right) =   (\bm{q},\bm{x}  )  \bigg]\nonumber\\
            \geq & E  \left[2\left\langle \bm{Q}_{\parallel\mathcal{K}}^{t+m-1}, \bm{Q}^{t+m}  -\bm{Q}^{t+m-1}\right\rangle\mid   \left(\bm{Q}^t  ,\bm{X}^t    \right) =   (\bm{q},\bm{x}  )  \right]
    \end{align}
    The last  follows the fact that since $\bm{Q}_{\parallel\mathcal{K}}^{t+m-1} \in \mathcal{K}$, $\bm{Q}_{\perp\mathcal{K}}^{t+m} \in \mathcal{K}^o$, we have 
    \begin{align*}
    \left\langle\bm{Q}_{\parallel\mathcal{K}}^{t+m-1},\bm{Q}_{\perp\mathcal{K}}^{t+m} \right\rangle \leq 0 \quad \text{and} \quad \left\langle \bm{Q}_{\parallel\mathcal{K}}^{t+m-1}, \bm{Q}_{\perp\mathcal{K}}^{t+m-1} \right\rangle = 0.
    \end{align*}

Substituting (\ref{eq: proof claim 4.3.2 5}) into (\ref{eq: proof claim 4.3.2 6}), we have
    \begin{align} \label{eq: v_para}
           &E  \left[\Delta^m V_{\parallel\mathcal{K}}  (\bm{q} ,\bm{x} )\mid   \left(\bm{Q}^t  ,\bm{X}^t    \right) =   (\bm{q},\bm{x}  )  \right]\nonumber\\
            \geq& E  \bigg[2\left\langle \bm{Q}_{\parallel\mathcal{K}}^{t+m-1},\bm{A}^{t+m-1}  -\bm{S}^{t+m-1}  +\bm{U}^{t+m-1}  \right\rangle \nonumber\\
            &+\norm{\bm{Q}_{\parallel\mathcal{K}}^{t+m-1}}^2-\norm{\bm{Q}_{\parallel\mathcal{K}}^{t}  }^2\mid   \left(\bm{Q}^t  ,\bm{X}^t    \right) =   (\bm{q},\bm{x}  )  \bigg]\nonumber\\
            \stackrel{(a)}{\geq}& \sum_{l=1}^m E  \left[2\left\langle \bm{Q}_{\parallel\mathcal{K}}^{t+m-l}  ,\bm{A}^{t+m-l}  -\bm{\lambda}\right\rangle\mid   \left(\bm{Q}^t  ,\bm{X}^t    \right) =   (\bm{q},\bm{x}  )  \right]\nonumber\\
            &+\sum_{l=1}^m E  \left[2\left\langle \bm{Q}_{\parallel\mathcal{K}}^{t+m-l}  ,\bm{\lambda}-\bm{S} ^{t+m-l}  \right\rangle\mid   \left(\bm{Q}^t  ,\bm{X}^t    \right) =   (\bm{q},\bm{x}  )  \right]\nonumber\\
            \stackrel{(b)}{=} &\sum_{l=1}^m E  \left[2\left\langle \bm{Q}_{\parallel\mathcal{K}}^{t+m-l}  ,\bm{A}^{t+m-l}  -\bm{\lambda}\right\rangle\mid   \left(\bm{Q}^t  ,\bm{X}^t    \right) =   (\bm{q},\bm{x}  )  \right]\nonumber\\
            &+\sum_{l=1}^m E  \left[-2\epsilon\left\langle \bm{Q}_{\parallel\mathcal{K}}^{t+m-l}  ,\bm{v}\right\rangle\mid   \left(\bm{Q}^t  ,\bm{X}^t    \right) =   (\bm{q},\bm{x}  )  \right]\nonumber\\
           & +\sum_{l=1}^m E  \left[-2\left\langle\bm{Q}_{\parallel\mathcal{K}}^{t+m-l}  , \bm{S} ^{t+m-l}  -\bm{v}\right\rangle\mid   \left(\bm{Q}^t  ,\bm{X}^t    \right) =   (\bm{q},\bm{x}  )  \right]\nonumber\\
            = &\sum_{l=1}^m E  \left[2\left\langle \bm{Q}_{\parallel\mathcal{K}}^{t+m-l}  ,\bm{A}^{t+m-l}  -\bm{\lambda}\right\rangle\mid   \left(\bm{Q}^t  ,\bm{X}^t    \right) =   (\bm{q},\bm{x}  )  \right]\nonumber \\
            &+ \sum_{l=1}^m E  \left[-2\epsilon\left\langle \bm{Q}_{\parallel\mathcal{K}}^{t+m-l}  ,\bm{v}\right\rangle\mid   \left(\bm{Q}^t  ,\bm{X}^t    \right) =   (\bm{q},\bm{x}  )  \right].
    \end{align}
    where  ($a$) follows by recursively opening up the previous  and by noting that 
    \begin{align*}
        \left\langle \bm{Q}^{t+m-1}, \bm{U}^{t+m-1} \right\rangle \geq 0,
    \end{align*}
    since each component of $\bm{Q}^{t+m-1}$ and $\bm{U}^{t+m-1}$ are non-negative. Equality ($b$) follows from $\bm{\lambda} = (1-\epsilon)\bm{v}$. Since $\bm{q_{\parallel\mathcal{K}}} \in \mathcal{K}$ and $\bm{s}, \bm{v} \in \mathcal{F}$, according to (\ref{eq: orthognal}), we have $\left\langle\bm{Q}_{\parallel\mathcal{K}}^{t+m-l}  , \bm{S} ^{t+m-l}  -\bm{v}\right\rangle=0$, which gives us the last equation. Combining  (\ref{eq: V}) and (\ref{eq: v_para}), we have
    \begin{align}\label{eq: proof claim 4.3.2.7}
        & E \left[\Delta^m V  (\bm{q},\bm{x}  )-\Delta^m V  (\bm{q}_{\perp\mathcal{K}}  )\mid   \left(\bm{Q}^t  ,\bm{X}^t    \right) =   (\bm{q},\bm{x}  )  \right]  \nonumber\\
        \leq & E  \left[\sum_{l=1}^m 2\left\langle\bm{Q}_{\perp\mathcal{K}}^{t+m-l}  ,\bm{A}^{t+m-l}  -\bm{\lambda}\right\rangle\mid   \left(\bm{Q}^t  ,\bm{X}^t    \right) =   (\bm{q},\bm{x}  )  \right]\nonumber\\
        &+2Nm^2 A_{max}+N^2m  (A_{max}+S_{max}  )^2\nonumber\\
        & + \sum_{l=1}^m -2\epsilon E  \left[\left\langle\bm{Q}_{\perp\mathcal{K}}^{t+m-l}  ,\bm{v}\right\rangle\mid   \left(\bm{Q}^t  ,\bm{X}^t    \right) =   (\bm{q},\bm{x}  )  \right] -2m v_{\min}\norm{\bm{Q}_{\perp\mathcal{K}}^t  }\nonumber\\
         \stackrel{(a)}\leq & 2\norm{\bm{Q}_{\perp\mathcal{K}}^t  }E  \left[\sum_{l=1}^m\norm{\bm{A}^{t+m-l}  -\bm{\lambda}}\mid   \left(\bm{Q}^t  ,\bm{X}^t    \right) =   (\bm{q},\bm{x}  )  \right]\nonumber\\
         &+ 2Nm^2 A_{max}  (N(A_{max}+\lambda)+\epsilon\norm{\bm{v}}  )-2m v_{\min}\norm{\bm{Q}_{\perp\mathcal{K}}^t  }\nonumber\\
        & +2m\epsilon\norm{\bm{Q}_{\perp\mathcal{K}}^t  }\norm{\bm{v}}+2Nm^2v_{\min}\left(A_{max}+S_{max}  \right)+N^2m  (A_{max}+S_{max}  )^2\nonumber\\
        =& 2\norm{\bm{Q}_{\perp\mathcal{K}}^t  }  E  \left[\sum_{l=1}^m\norm{\bm{A}^{t+m-l}  -\bm{\lambda}}\mid   \left(\bm{Q}^t  ,\bm{X}^t    \right) =   (\bm{q},\bm{x}  )  \right]\nonumber\\
        &+ 2\norm{\bm{Q}_{\perp\mathcal{K}}^t  }  \left(m\epsilon\norm{\bm{v}}-mv_{\min}  \right)+K_3(m),   
        \end{align}
    
    where ($a$) follows by using Cauchy–Schwarz Inequality and,
    \begin{align*}
              K_3(m) &= 2Nm^2\left(A_{max}+S_{max}  \right)  (N(A_{max}+\lambda)\\
              &+\epsilon\norm{\bm{v}} +v_{\min} )+N^2m  (A_{max}+S_{max}  )^2 ,   
    \end{align*}
    According to Lemma \ref{lemma: bound W_perp}, we conclude 
    \begin{align*}
            &E  \left[\Delta^m W_{\perp\mathcal{K}}  (\bm{q},\bm{x}  ) \mid   \left(\bm{Q}^t  ,\bm{X}^t    \right) =   (\bm{q},\bm{x}  )\right]\\
            &\leq E\left[\frac{1}{2\norm{\bm{q}_{\perp\mathcal{K}}}}  \left(\Delta^m V  (\bm{q},\bm{x}  )-\Delta^m V  (\bm{q}_{\perp\mathcal{K}}  )  \right)\mid   \left(\bm{Q}^t  ,\bm{X}^t    \right) =   (\bm{q},\bm{x}  )  \right]\\
            & \leq E  \left[\sum_{l=1}^m\norm{\bm{A}^{t+m-l}  -\bm{\lambda}}\mid   \left(\bm{Q}^t  ,\bm{X}^t    \right) =   (\bm{q},\bm{x}  )  \right] + \frac{K_3(m)}{2\norm{\bm{Q}_{\perp\mathcal{K}}^t  }}+m  (\epsilon\norm{\bm{v}}-v_{\min}  )
    \end{align*}
\end{proof}

\section{Proof of Lemma \ref{Lemma: finite V_1, V_2, V_3}} \label{Proof of Lemma finite V_1, V_2, V_3}
    \begin{proof}{of Lemma \ref{Lemma: finite V_1, V_2, V_3}.}
    Note that  $V'(\bm{q},\bm{x}) = \norm{\bm{q}_{{\parallel}\mathcal{L}}}^2\leq \norm{\bm{q}}^2=V(\bm{q},\bm{x})$. Therefore, in order to prove the lemma, we will show that $E  [V  (\bm{Q},\bm{X}  )  ]$ is finite in steady-state. We do this using Lemma \ref{Lemma: $m$-step bound for q_perp_norm_2}
on the Lyapunov function $W  (\bm{q},\bm{x}  ) = \norm{\bm{q}} = \sqrt{V  (\bm{q},\bm{x}  )}$. Note that its $m$-step drift is
        \begin{equation*}
            \Delta^m W  (\bm{q},\bm{x}  ) \triangleq   \left[W  \left(\bm{Q}^{t+m},\bm{X}^{t+m}    \right)-W  \left(\bm{Q}^t,\bm{X}^t  \right)  \right]\mathcal{I}  \left(  \left(\bm{Q}^t  ,\bm{X}^t    \right)=  (\bm{q},\bm{x}  )  \right).
        \end{equation*}
        
To use Lemma  \ref{Lemma: $m$-step bound for q_perp_norm_2}, we first check that the Conditions 2 and 3 of Lemma \ref{Lemma: $m$-step bound for q_perp_norm_2}. 
        \begin{align*}
               \abs{ \Delta W  (\bm{q},\bm{x}  )}  & = \abs{ \norm{\bm{Q}^{t+1}  }-\norm{\bm{Q}^t  }}\mid \mathcal{I}  (  \left(\bm{Q}^t  ,\bm{X}^t    \right)=  (\bm{q},\bm{x}  )  )\\
               & \leq \norm{\bm{Q}^{t}  -\bm{Q}^t}\\
               & = \sqrt{\sum_{ij}  \left(Q_{ij}^{t}  -Q_{ij}^t    \right)^2}\\
               & \leq \sqrt{\sum_{ij}  \left(m  (A_{max}+S_{max}  )  \right)^2}\\
               & = N  (A_{max}+S_{max}  ).       
        \end{align*}
        {Therefore, Conditions 2 and 3 are satisfied with $D(m) = Nm(A_{max}+S_{max})$ and $\hat{D}(t_0) =N  (A_{max}+S_{max}  ) $.}
        
        We will now verify Condition 1.
        \begin{align*}
               &E  \left[\Delta^m W  (\bm{q},\bm{x}  )\mid   \left(\bm{Q}^t  ,\bm{X}^t    \right)=  (\bm{q},\bm{x}  )  \right]\\
               =& E  \left[\norm{\bm{Q}^{t+m}  }-\norm{\bm{Q}^t  }\mid   \left(\bm{Q}^t  ,\bm{X}^t    \right)=  (\bm{q},\bm{x}  )  \right]\\
                \leq & E   \left[\frac{1}{2\norm{\bm{Q}^t  }}  \left(\norm{\bm{Q}^{t+m}  }^2-\norm{\bm{Q}^t  }^2  \right)\mid   \left(\bm{Q}^t  ,\bm{X}^t    \right)=  (\bm{q},\bm{x}  )   \right]\\
               \leq &  \frac{1}{2\norm{\bm{q} }}E  \left[\Delta^m V  (\bm{q},\bm{x}  )\mid   \left(\bm{Q}^t  ,\bm{X}^t    \right) =   (\bm{q},\bm{x}  )  \right]\\
               \stackrel{(a)}\leq & -\frac{m(\epsilon )}{4}\ \text{ for all $\bm{q}$ such that $W (\bm{q}) \geq \frac{2K_2  (m(\epsilon ) )}{\epsilon }$}\
        \end{align*}
        where, (a) follows from Claim \ref{claim: 4.2.1},  
        \begin{align*}
        m(\epsilon ) =  \min \left\{m \in \mathbb{N}_+\mid 2NA_{max}C_{max}\frac{1-\alpha_{max}^m}{1-\alpha_{max}} < \frac{m\epsilon }{2} \right\}
        \end{align*}
        and
        \begin{align*}
            K_2  (m(\epsilon )  ) &=  m(\epsilon )N^2  (A_{max}+S_{max}  )^2+2m(\epsilon )^2 N^2  (A_{max}+S_{max})\left(\epsilon +A_{max}+\lambda_{max} \right).
        \end{align*}
        {Therefore, Condition 1 is also verified with $\kappa(m(\epsilon)) = \frac{2K_2  (m(\epsilon ) )}{\epsilon }$ and $\eta(m(\epsilon)) = \frac{m(\epsilon )}{4}$}. According to Lemma \ref{Lemma: $m$-step bound for q_perp_norm_2}, we conclude that all moments of $W  (\overline{\bm{Q}},\overline{\bm{X}}  )$ are finite in steady state. The lemma follows by noting that $V'  (\bm{q} ,\bm{x} ) = \norm{\mathbb{q}_{{\parallel}\mathcal{L}}}^2$ is the norm of projection of $\mathbb{q}$ onto $\mathcal{S}$.
    \end{proof}

\section{Details in Proof of Theorem \ref{thm: exp}}
\subsection{Proof of Lemma \ref{lem: comparing}} \label{Proof of Lemma: comparing}
   \begin{proof}  {of Lemma \ref{lem: comparing}.  }
   Since $\epsilon$ is fixed, we drop $(\cdot)^{(\epsilon)}$ for simplicity. First, let $X^t =\bm{x}_0$, let $\alpha^{t+l}$ denote the conditional distribution of $X^{t+l}$ and let $P$ denote the transition matrix of $\{X^t\}_{t\geq0}$. Then, for any $l \in \{0,1,...,m-1\}$,
    \begin{align*}
    &E  \left[  \left(  \left(A^{t+l}-\lambda  \right)  \left(A^{t+m}-\lambda  \right)\mid X^t =\bm{x}_0 \right)  \right]\nonumber\\
    &=\sum_{x \in \Omega}\alpha^{t+l}  \left(x  \right)  \left[f  (x  )-\lambda  \right]  \left[\sum_{y \in \Omega}P_{xy}^{m-l}f  (y  )-\lambda  \right]\\
    &=\sum_{x \in \Omega}\alpha^{t+l}  (x  )  \left[f  (x  )-\lambda  \right]  \left[\sum_{y \in \Omega}  \left(P_{xy}^{m-l}-\pi  (y  )  \right)f  (y  )  \right].\nonumber
    \end{align*}
    Similarly, we have
    \begin{align*}
    &E  \left[  \left(b^0-\lambda  \right)  \left(b^{m-l}-\lambda  \right)  \right]\\
    &=\sum_{x \in \Omega}\pi  (x  )  \left[f  (x  )-\lambda  \right]  \left[\sum_{y \in \Omega}P_{xy}^{m-l}f  (y  )-\lambda  \right]\\
    &=\sum_{x \in \Omega}\pi  (x  )  [f  (x  )-\lambda  ]  \left[\sum_{y \in \Omega}  \left(P_{xy}^{m-l}-\pi  (y  )  \right)f  (y  )  \right].
    \end{align*}
    Thus,
    \begin{align*}
        &\abs{\sum_{l=0}^{m-1}  \left(E  \left[  \left(A^{t+l}-\lambda  \right)  \left(A^{t+m}-\lambda  \right)\mid X^t=\bm{x}_0  \right]-E  \left[  \left(b^{m-l}-\lambda  \right)  \left(b^0-\lambda  \right)  \right] \right)}\\
        =&\abs{\sum_{l=0}^{m-1}\sum_{x \in \Omega}  \left(\alpha^{t+l}  (x  )-\pi  (x  )  \right)  \left(f  (x  )-\lambda  \right)\sum_{y \in \Omega}  \left[P_{xy}^{m-l}-\pi  (y  )  \right]f  (y  )}\\
        =&\abs{\sum_{l=0}^{m-1}\sum_{x \in \Omega}\sum_{z \in \Omega}\alpha^t  (z  )  \left[P_{zx}^l  (x  )-\pi  (x  )  \right]  \left[f  (x  )-\lambda  \right]\sum_{y \in \Omega}  \left[P_{xy}^{m-l}-\pi  (y  )  \right]f  (y  )}\\
        \leq&   (A_{max}+\lambda  )A_{max}\sum_{l=0}^{m-1}\sum_{x \in \Omega}\sum_{z \in \Omega}\alpha^t  (z  )\abs{P_{zx}^l  (x  )-\pi  (x  )}\sum_{y \in \Omega}\abs{P_{xy}^{m-l}-\pi  (y  )}
    \end{align*}
    and
    \begin{align*}
        &\abs{E  \left[  \left(A^{t+m}-\lambda \right )^2-  (b^0-\lambda  )^2\mid X^t =\bm{x}_0   \right]}\\
        =&\abs{\sum_{x \in \Omega}  \left(\alpha^{t+m}  (x  )-\pi  (x  )  \right)  \left(f  (x  )-\lambda  \right)^2}\\
        =&\abs{\sum_{x \in \Omega}\sum_{y \in \Omega}\pi  (y  )  \left(P_{yx}^m-\pi  (x  )  \right)  \left(f  (x  )-\lambda  \right)^2}\\
        \leq & 2  (A_{max}+\lambda  )^2C\alpha^m.
    \end{align*}
    According to (\ref{eq: geo1}) and (\ref{eq: geo2}), we have  $\forall \bm{x}_0 \in \Omega$,
    \begin{align*}
           &\abs{\sum_{l=0}^{m-1}  \left(E  \left[  \left(A^{t+l}-\lambda  \right)  \left(A^{t+m}-\lambda  \right)\mid X^t=\bm{x}_0  \right]-E  \left[  \left(b^{m-l}-\lambda  \right)  \left(b^0-\lambda  \right)  \right] \right)}\\
           &\leq 4  \left(A_{max}+\lambda  \right)A_{max}C^2m\alpha^m,
    \end{align*}
    and
    \begin{align*}
             \abs{E  \left[  \left(A^{t+m}-\lambda \right )^2-  (b^0-\lambda  )^2 \mid  X^t=\bm{x}_0\right]} \leq  2  (A_{max}+\lambda  )^2C\alpha^m.
    \end{align*}
Since these bounds are true for all $ \bm{x}_0 \in \Omega$, and since the bounds do not depend on $\bm{x}_0$, we have the lemma. 
\end{proof}

\subsection{Proof of Theorem \ref{thm: exp} }\label{Appd: Heavy-traffic Limit Distribution for Queue Length}
\begin{proof}  {of Theorem \ref{thm: exp}.  }
In the rest of the proof, we consider the system in its steady-state, and so for every time $t$, 
 $\left(Q^t\right)^{(\epsilon)} \stackrel{d}= \overline{Q}^{(\epsilon)}$. For ease of exposition, we again drop the superscript $(\cdot)^{(\epsilon)}$
 and just use $Q^t$. 
 Then, $A^t$  denotes the arrival in steady state, and the queue length at time $t+m$, is  $Q^{t+m}=Q^t+\sum_{l=0}^{m-1} A^{t+l}-\sum_{l=0}^{m-1} S^{t+l}+\sum_{l=0}^{m-1} U^{t+l}$, 
 which has the same distribution as $Q^t$ 
 for all $m \in \mathbb{N}_+$.  Note that, from the definition of the unused service and (\ref{eq: 2}), it can be easily shown (by considering the cases of $U=0$ and $U\neq 0$) that
\begin{equation*}
\begin{aligned}
      \left(e^{\epsilon\theta Q^{t+1}}-1  \right)  \left(e^{-\epsilon\theta U^{t}}-1  \right)=0\\
e^{\epsilon\theta Q^{t+1}} = 1- e^{-\epsilon\theta U^{t}}+e^{\epsilon\theta \left(Q^t+A^t-S^t\right)}.
\end{aligned}
\end{equation*}
Taking expectation with respect to the stationary distribution on both sides, we have
\begin{align}\label{ref: exp 1}
E  \left[e^{\epsilon\theta Q^{t+1}}  \right]= 1- E  \left[e^{-\epsilon\theta U^{t}}  \right]+E  \left[ e^{\epsilon \theta  (Q^t+A^t-S^t  )}  \right].
\end{align}
Since $\theta \leq 0$, we have that $e^{\epsilon\theta Q^t}\leq 1$. Therefore, in steady-state $E  \left[e^{\epsilon\theta Q^{t}}\right]$ is bounded, and so we can set its drift to zero:
\begin{align}\label{ref: exp 2}
    E  \left[e^{\epsilon\theta Q^{t+1}}  \right] -E  \left[e^{\epsilon\theta Q^{t}}  \right]=0.
\end{align}
Combining (\ref{ref: exp 1}) and (\ref{ref: exp 2}), we have
\begin{align*}
    E  \left[e^{\epsilon\theta Q^{t}}  \right]= 1- E  \left[e^{-\epsilon\theta U^{t}}  \right]+E  \left[e^{\epsilon \theta  (Q^t+A^t-S^t  )}  \right]\\
        E  \left[e^{\epsilon\theta Q^{t}}  \left(e^{\epsilon \theta  (A^t-S^t  )}-1  \right)  \right]=  E  \left[e^{-\epsilon\theta U^{t}}  \right]-1.
\end{align*}
Therefore, since we are in steady-state and so we can replace index $t$ by $t+m$ and get,
\begin{align*}
        E  \left[e^{\epsilon\theta Q^{t+m}}  \left(e^{\epsilon \theta  (A^{t+m}-S^{t+m}  )}-1  \right)  \right]=  E  \left[e^{-\epsilon\theta U^{t+m}}  \right]-1 
   =  E  \left[e^{-\epsilon\theta U^{t}}  \right]-1,
\end{align*}
where the last equality is because of steady-state, and the fact that $E  \left[e^{-\epsilon\theta U^{t}}\right]$ is bounded in steady-state. Boundedness follows from the observation that $U^t$ is bounded by $S_{max}$. Expanding $Q^{t+m}$ using \eqref{eq:1}, we get
\begin{align}\label{eq: main}
  & E  \left[e^{\epsilon\theta Q^{t}}e^{\epsilon\theta  \left(\sum_{l=0}^{m-1}A^{t+l}-\sum_{l=0}^{m-1}S^{t+l}+\sum_{l=0}^{m-1}U^{t+l}  \right)}  \left(e^{\epsilon \theta  \left(A^{t+m}-S^{t+m} \right )}-1  \right) \right ]\nonumber\\
  &= E  \left[e^{-\epsilon\theta U^{t}}  \right]-1.
\end{align}
Taking Taylor expansion of RHS of (\ref{eq: main}) with respect to $\theta$, we have
\begin{align}\label{eq: RHS}
        E  \left[e^{-\epsilon\theta U^{t}}  \right]-1
        =& -E  \left[-\epsilon\theta U^{t}  \right]+E  \left[\frac{\epsilon^2\theta^2 \left(U^{t}\right)^2}{2}  \right]+E\left[\frac{-\left(\epsilon U^t\right)^3\hat{\theta}^3}{6}\right]\nonumber\\
        \stackrel{(a)}{=}& -\epsilon^2\theta+\frac{\epsilon^2\theta^2 E  \left[\left(U^{t}\right)^2  \right]}{2}+ E\left[\frac{-\left(\epsilon U^t\right)^3\hat{\theta}^3}{6}\right]\nonumber\\
        = &\epsilon^2\theta  \left(-1+\frac{\theta E  \left[\left(U^{t}\right)^2  \right]}{2}  \right)+E\left[\frac{-\left(\epsilon U^t\right)^3\hat{\theta}^3}{6}\right],
\end{align}
where $(a)$ is due to (\ref{ E(u)=epsilon}).

Since $\hat{\theta} \in (\theta,0]$, and $U^t \leq S_{max}$, the absolute value of last term in (\ref{eq: RHS}) can be upper bounded by $K \epsilon^3$ where $K$ is a constant. Therefore, for ease of exposition, we can write (\ref{eq: RHS}) as:
\begin{equation*}
    \begin{aligned}
        &E  \left[e^{-\epsilon\theta U^{t}}  \right]-1 = \epsilon^2\theta  \left(-1+\frac{\theta E  \left[\left(U^{t}\right)^2  \right]}{2}  \right)+O\left(\epsilon^3\right) = \epsilon^2\theta  \left(-1+\frac{\theta O(\epsilon)}{2}  \right)+O\left(\epsilon^3\right).
\end{aligned}    
\end{equation*}
where the last equation comes from:
$E  \left[\left(U^{t}\right)^2  \right] \leq E\left[\left(U^{t}S^{t}\right)  \right] \leq S_{max}E\left[ U^{t}\right]\leq S_{max}\epsilon$ using \eqref{ E(u)=epsilon}.
Taking Taylor expansion of LHS of (\ref{eq: main}) with respect to $\theta$, for the same reason, we have
\begin{equation*}
    \begin{aligned}
        &  E  \left[e^{\epsilon\theta   Q^{t}}e^{\epsilon\theta  \left(\sum_{l=0}^{m-1}A^{t+l}-\sum_{l=0}^{m-1}S^{t+l}+\sum_{l=0}^{m-1}U^{t+l}  \right)}  \left(e^{\epsilon \theta  \left(A^{t+m}-S^{t+m} \right )}-1  \right)  \right]\\
        = &\epsilon^2\theta E  \Bigg[e^{\epsilon\theta Q^{t}}  \left(1+\theta\epsilon  \left(\sum_{l=0}^{m-1}A^{t+l}-\sum_{l=0}^{m-1}S^{t+l}+\sum_{l=0}^{m-1}U^{t+l}  \right)+O  \left(\epsilon^2  \right)  \right)\\
        &\left(\epsilon^{-1}  \left(A^{t+m}-S^{t+m}  \right)+\frac{\theta}{2}  \left(A^{t+m}-S^{t+m}  \right)^2+O  \left(\epsilon  \right)  \right)  \Bigg]\\
        \end{aligned}
\end{equation*}
It can be divided into seven terms as follows:         
\begin{align*}
        &  \epsilon^2\theta \Bigg \{\underbrace{ E  \left[e^{\theta \epsilon Q^t}\epsilon^{-1}  \left(A^{t+m}-S^{t+m}  \right)  \right]}_{\mathcal{T}_{5}}\\
        +&  \underbrace{E  \left[e^{\theta \epsilon Q^t}\theta  \left(\sum_{l=0}^{m-1}A^{t+l}-\sum_{l=0}^{m-1}S^{t+l}+\sum_{l=0}^{m-1}U^{t+l}  \right)  \left(A^{t+m}-S^{t+m} \right )  \right]}_{\mathcal{T}_{6}}\\
        +& \underbrace{E  \left[O  (\epsilon  )e^{\theta\epsilon Q^t}  \left(A^{t+m}-S^{t+m} \right )  \right]}_{\mathcal{T}_{7}}+ \underbrace{E  \left[e^{\theta \epsilon Q^t}\frac{\theta}{2}  \left(A^{t+m}-S^{t+m} \right )^2  \right]}_{\mathcal{T}_{8}}\\
        +& \underbrace{E  \left[e^{\theta \epsilon Q^t}\frac{ \epsilon\theta^2}{2}  \left(\sum_{l=0}^{m-1}A^{t+l}-\sum_{l=0}^{m-1}S^{t+l}+\sum_{l=0}^{m-1}U^{t+l}  \right)  \left(A^{t+m}-S^{t+m} \right )^2  \right]}_{\mathcal{T}_{9}}\\
        +&\underbrace{E  \left[O  \left(\epsilon^2  \right)\frac{\theta}{2}  \left(A^{t+m}-S^{t+m}  \right)^2  \right]}_{\mathcal{T}_{10}}\\
        +& \underbrace{E  \left[e^{\theta\epsilon Q^t}  \left(1+\theta\epsilon  \left(\sum_{l=0}^{m-1}A^{t+l}-\sum_{l=0}^{m-1}S^{t+l}+\sum_{l=0}^{m-1}U^{t+l}  \right)+O  (\epsilon^2  )  \right)O  (\epsilon  )  \right]}_{\mathcal{T}_{11}}\Bigg \}.
\end{align*}
According to Lemma \ref{lem: geo}, we have
\begin{align*}
    \mathcal{T}_{5}=&E  \left[e^{\theta \epsilon Q^t}\epsilon^{-1}  \left(A^{t+m}-\lambda \right )  \right] +E  \left[e^{\theta \epsilon Q^t}\epsilon^{-1}  \left(\lambda-S^{t+m}  \right)  \right] \\
    =&  E  \left[e^{\theta \epsilon Q^t}\epsilon^{-1}  \left(A^{t+m}-\lambda \right )  \right] - E  \left[e^{\theta\epsilon Q^t}  \right]. 
\end{align*}
Since 
\begin{align*}
  \abs{E  \left[e^{\theta \epsilon Q^t}\epsilon^{-1}  \left(A^{t+m}-\lambda \right )  \right]} \leq \frac{\alpha^m}{\epsilon},
\end{align*}
we have 
\begin{align*}
    - E  \left[e^{\theta\epsilon Q^t}  \right]-\frac{\alpha^m}{\epsilon} \leq \mathcal{T}_{5}\leq& - E  \left[e^{\theta\epsilon Q^t}  \right]+\frac{\alpha^m}{\epsilon}. 
\end{align*}

\begin{align*}
       \mathcal{T}_{6}= &E  \left[e^{\theta \epsilon Q^t}\theta  \left(\sum_{l=0}^{m-1}A^{t+l}-\sum_{l=0}^{m-1}S^{t+l}+\sum_{l=0}^{m-1}U^{t+l}  \right)  \left(A^{t+m}-S^{t+m} \right )  \right]\\
        =&\theta E  \left[e^{\theta \epsilon Q^t} \left (\sum_{l=0}^{m-1}A^{t+l}-\sum_{l=0}^{m-1}S^{t+l}  \right)  \left(A^{t+m}-S^{t+m} \right )  \right]\\
        &+ \theta E  \left[e^{\theta \epsilon Q^t} \sum_{l=0}^{m-1}U^{t+l}  \left(A^{t+m}-S^{t+m} \right )  \right].
\end{align*}
Since
\begin{align}
   & \abs{\theta E  \left[e^{\theta \epsilon Q^t} \sum_{l=0}^{m-1}U^{t+l}  \left(A^{t+m}-S^{t+m} \right )  \right]}\nonumber\\
   \leq &\abs{ \theta} \sqrt{E  \left[e^{2\theta \epsilon Q^t}  \left(A^{t+m}-S^{t+m} \right )^2  \right]}\sqrt{  \left(\sum_{l=0}^{m-1}U^{t+l}  \right)^2}\nonumber\\
   \leq &\abs{ \theta}  m\sqrt{\epsilon}  (A_{max}+S_{max}  )\sqrt{S_{max}},
\end{align}
We have 
\begin{align*}
       \mathcal{T}_{6} &=\theta E  \left[e^{\theta \epsilon Q^t} \left (\sum_{l=0}^{m-1}A^{t+l}-\sum_{l=0}^{m-1}S^{t+l}  \right)  \left(A^{t+m}-S^{t+m} \right )  \right] +mO(\sqrt{\epsilon})
\end{align*}
Also, $\abs{ \mathcal{T}_{7}}, \abs{ \mathcal{T}_{9}}, \abs{ \mathcal{T}_{10}}, \abs{ \mathcal{T}_{11}}$ can be bounded as follows:
\begin{align*}
   &\abs{ \mathcal{T}_{7}} =O  (\epsilon  ),\\
       &\abs{ \mathcal{T}_{9}} \leq \frac{\theta^2\epsilon}{2}m  (A_{max}+S_{max}  )  (A_{max}+S_{max}  )^2=mO(\epsilon),\\ 
       &\abs{ \mathcal{T}_{10}} \leq \frac{\mid \theta\mid }{2}  (A_{max}+S_{max}  )^2O  \left(\epsilon^2  \right)=O(\epsilon^2),\\
       &\abs {\mathcal{T}_{11}} \leq O  (\epsilon  ).
\end{align*}
Now consider:
\begin{align*}
       &\mathcal{T}_{6}+\mathcal{T}_{8}\\
       =& \theta E  \left[e^{\theta\epsilon Q^t}\sum_{l=0}^{m-1}  (A^{t+l}-\lambda  )  \left(A^{t+m}-\lambda \right )  \right] -\theta E  \left[e^{\theta\epsilon Q^t}\sum_{l=0}^{m-1}  (A^{t+l}-\lambda  )  (S^{t+m}-\lambda  )  \right]\\
       &-\theta E  \left[e^{\theta\epsilon Q^t}\sum_{l=0}^{m-1}  (S^{t+l}-\lambda  )  \left(A^{t+m}-\lambda \right )  \right] +\theta E  \left[e^{\theta\epsilon Q^t}\sum_{l=0}^{m-1}  (S^{t+l}-\lambda  )  (S^{t+m}-\lambda  )  \right]\\
       &+mO(\sqrt{\epsilon})+ \frac{\theta}{2}E  \left[e^{\theta\epsilon Q^t}  \left(  \left(A^{t+m}-\lambda \right )^2+\sigma_s^2  \right)  \right] \\
       &+\frac{\theta}{2}E  \left[e^{\theta\epsilon Q^t}  \left(2  \left(A^{t+m}-\lambda \right )  \left(\lambda-S^{t+m}  \right)+\epsilon^2  \right)  \right]\\
       = &\theta E  \left[e^{\theta\epsilon Q^t}\sum_{l=0}^{m-1}  (A^{t+l}-\lambda  )  \left(A^{t+m}-\lambda \right )  \right] +\frac{\theta}{2}E  \left[e^{\theta\epsilon Q^t}  \left(  \left(A^{t+m}-\lambda \right )^2+\sigma_s^2  \right)  \right]\\
       &+mO(\epsilon   )+mO(\sqrt{\epsilon}),
\end{align*}
where the last equation comes from the independence between the current service process and the previous arrival, queue length and service process.
According to Lemma \ref{lem: comparing},  we have
\begin{align}\label{eq: comp-exp}
        \mathcal{T}_{6}+\mathcal{T}_{8}\leq &\frac{\theta}{2} E  \left[e^{\theta\epsilon Q^t}  \right]  \bigg(2\sum_{l=0}^{m-1}E  \left[  \left(b^{m-l}-\lambda \right )  \left(b^0-\lambda  \right)  \right]+E  \left[  (b^0-\lambda  )^2  \right]+\sigma_s^2  \bigg)\nonumber\\
        &+mO(\sqrt{\epsilon})+mO\left(\epsilon \right)+L_1m\alpha^m
     +  L_2\alpha^m,\nonumber\\
             \mathcal{T}_{6}+\mathcal{T}_{8}\geq &\frac{\theta}{2} E  \left[e^{\theta\epsilon Q^t}  \right]  \bigg(2\sum_{l=0}^{m-1}E  \left[  \left(b^{m-l}-\lambda \right )  \left(b^0-\lambda  \right)  \right]+E  \left[  (b^0-\lambda  )^2  \right]+\sigma_s^2  \bigg)\nonumber\\
        &+mO(\sqrt{\epsilon})+mO\left(\epsilon \right)-L_1m\alpha^m
     -  L_2\alpha^m.
\end{align}
where $L_1 = 4  \left(A_{max}+\lambda  \right)A_{max}C^2$ and $L_2 = 2  (A_{max}+\lambda  )^2C$.
Combine (\ref{eq: main}) to (\ref{eq: comp-exp}), we have
\begin{align*}
   &E  \left[e^{\theta\epsilon Q^t}  \right]  \left(-1+\frac{\theta}{2}     \bigg(2\sum_{l=0}^{m-1}E  \left[  \left(b^{m-l}-\lambda \right )  \left(b^0-\lambda  \right)  \right] +E  \left[  (b^0-\lambda  )^2  \right]+\sigma_s^2  \bigg)\right)\\
   &+mO(\sqrt{\epsilon})+mO\left(\epsilon \right)+\frac{\alpha^m}{\epsilon}-L_1m\alpha^m
     -  L_2\alpha^m \\
   \leq& O  (\epsilon  )-1.
\end{align*}
\begin{align*}
      &E  \left[e^{\theta\epsilon Q^t}  \right]  \left(-1+\frac{\theta}{2}     \bigg(2\sum_{l=0}^{m-1}E  \left[  \left(b^{m-l}-\lambda \right )  \left(b^0-\lambda  \right)  \right] +E  \left[  (b^0-\lambda  )^2  \right]+\sigma_s^2  \bigg)\right)\\
      & +mO(\sqrt{\epsilon})+mO\left(\epsilon \right)+\frac{\alpha^m}{\epsilon}+L_1m\alpha^m
     +  L_2\alpha^m \\
   \geq& O  (\epsilon  )-1.
\end{align*}
Let $m=\epsilon^{-\frac{1}{4}}$ and let $\epsilon \to 0$, we have
\begin{align*}
   \lim_{\epsilon \to 0}E  [e^{\theta\epsilon Q^t}  ] =& \frac{1}{1-\frac{2\lim_{m \to \infty}\sum_{l=0}^{m-1} E  [ \left(b^{m-l}-\lambda \right )  \left(b^0-\lambda  \right)  ]+E  [  (b^0-\lambda  )^2  ]+\sigma_s^2}{2}}.
\end{align*}
Put the ${(\cdot )}^{(\epsilon)}$ notation back. We have
\begin{align*}
        &\lim_{\epsilon \to 0}E  \left[e^{\theta\epsilon \overline{Q}^{(\epsilon)}}  \right] \\
        =& \frac{1}{1-\frac{2\lim_{m(\epsilon) \to \infty}\sum_{l=1}^{m(\epsilon)}E  \left[  \left (b^{(\epsilon)}_{l}-\lambda^{(\epsilon)}  \right)  \left (\left(b^{0}\right)^{(\epsilon)}-\lambda^{(\epsilon)}  \right)  \right]+ \lim_{\epsilon \to 0}E\left[ \left(b^{(\epsilon)}_{l}-\lambda^{(\epsilon)}  \right)^2 \right]+\sigma_s^2}{2}}\\
        =& \frac{1}{1-\frac{\lim_{\epsilon \to 0} \gamma^{(\epsilon)}(0) + \lim_{m(\epsilon) \to \infty} 2\sum_{i=1}^{m(\epsilon)}\gamma^{(\epsilon)}(t)+\sigma_s^2}{2}}.\\
\end{align*}
Similarly as the proof in part 2 of Theorem \ref{thm: pos rec single server queue}, using Claim \ref{claim: 3.2.3}, we have
\begin{align*}
    \lim_{\epsilon \to 0}E  \left[e^{\epsilon\theta \overline{Q}^{(\epsilon)}}  \right] = \frac{1}{1-\theta\frac{\sigma_a^2+\sigma_s^2}{2}}.
\end{align*}
Note that $\frac{1}{1-\theta\frac{\sigma_a^2+\sigma_s^2}{2}}$ is the one-sided Laplace transform of an exponential random variable with mean $\frac{\sigma_a^2+\sigma_s^2}{2}$. 
Then, according to Lemma 6.1 in \cite{braverman2017heavy}, this implies that $\epsilon \overline{Q}^{(\epsilon)} $ converge in distribution to an exponential random variable with mean $\frac{\sigma_a^2+\sigma_s^2}{2}$, this completing the proof.
\end{proof}
\end{APPENDICES}

\end{document}